\def\vincent#1{}
\def\dongyeap#1{}
\def\tom#1{}
\begin{document}

\title[Perfect matchings in random sparsifications of Dirac hypergraphs]
{
    Perfect matchings in random sparsifications of Dirac hypergraphs 
}

\author[Kang]{Dong Yeap Kang}
\address{Dong Yeap Kang, Extremal Combinatorics and Probability Group (ECOPRO), Institute for Basic Science (IBS), Daejeon, South Korea}
\email{\texttt{dykang.math@ibs.re.kr}}

\author[Kelly]{Tom Kelly}
\address{Tom Kelly, School of Mathematics, Georgia Institute of Technology, Atlanta, GA 30332, USA}
\email{\texttt{tom.kelly@gatech.edu}}

\author[K\"uhn]{Daniela K\"uhn}
\address{Daniela K\"uhn, School of Mathematics, University of Birmingham,
Edgbaston, Birmingham, B15 2TT, United Kingdom}
\email{\texttt{D.Kuhn@bham.ac.uk}}

\author[Osthus]{Deryk Osthus}
\address{Deryk Osthus, School of Mathematics, University of Birmingham,
Edgbaston, Birmingham, B15 2TT, United Kingdom}
\email{\texttt{D.Osthus@bham.ac.uk}}

\author[Pfenninger]{Vincent Pfenninger}
\address{Vincent Pfenninger, Institute of Discrete Mathematics, Graz University of Technology, Graz, Austria}
\email{\texttt{pfenninger@math.tugraz.at}}


\thanks{This project has received partial funding from the European Research
Council (ERC) under the European Union's Horizon 2020 research and innovation programme (grant agreement no.\ 786198, D.~Kang, T.~Kelly, D.~K\"uhn, D.~Osthus, and V.~Pfenninger).
Dong Yeap Kang was supported by Institute for Basic Science (IBS-R029-Y6).}

\subjclass{05C80, 	05C65, 	05C70, 	05D40}
\keywords{%
  perfect matching,
  random graph,
  random hypergraph,
  threshold,
  Shamir's problem,
  absorbing method,
  spreadness%
}

\begin{abstract}
For all integers $n \geq k > d \geq 1$, let $m_{d}(k,n)$ be the minimum integer $D \geq 0$ such that every $k$-uniform $n$-vertex hypergraph $\mathcal H$ with minimum $d$-degree $\delta_{d}(\mathcal H)$ at least $D$ has an optimal matching.
For every fixed integer $k \geq 3$, we show that for $n \in k \mathbb{N}$ and $p = \Omega(n^{-k+1} \log n)$, if $\mathcal H$ is an $n$-vertex $k$-uniform hypergraph with $\delta_{k-1}(\mathcal H) \geq m_{k-1}(k,n)$, then a.a.s.\ its $p$-random subhypergraph $\mathcal H_p$ contains a perfect matching. 
Moreover, for every fixed integer $d < k$ and $\gamma > 0$, we show that the same conclusion holds if $\mathcal H$ is an $n$-vertex $k$-uniform hypergraph with $\delta_d(\mathcal H) \geq m_{d}(k,n) + \gamma\binom{n - d}{k - d}$.
Both of these results strengthen Johansson, Kahn, and Vu's seminal solution to Shamir's problem and can be viewed as ``robust'' versions of hypergraph Dirac-type results.
In addition, we also show that in both cases above, $\mathcal H$ has at least $\exp((1-1/k)n \log n - \Theta (n))$ many perfect matchings, which is best possible up to an $\exp(\Theta(n))$ factor.
\end{abstract}

\maketitle



\section{Introduction}

A \emph{hypergraph} is an ordered pair $\cH = (V,E)$ of a set $V \coloneqq V(\cH)$ of \emph{vertices} of $\cH$ and a set $E \coloneqq E(\cH)$ of subsets of $V$, where the elements of $E$ are called the \emph{edges} of $\cH$. If $E(\cH) \subseteq \binom{V}{k}$ for some positive integer $k$, then we call $\cH$ \emph{$k$-uniform}. 
We often identify~$E(\cH)$ with~$\cH$ if its set of vertices is clear.
A \emph{matching} of a hypergraph $\cH$ is a set of disjoint edges of $\cH$. An \emph{optimal matching} of a $k$-uniform hypergraph $\cH$ is a matching consisting of $\lfloor |V(\cH)|/k \rfloor$ edges. 
An optimal matching of a $k$-uniform hypergraph $\cH$ is called \emph{perfect} if $k$ divides $|V(\cH)|$.

In a seminal paper by Edmonds~\cite{edmonds1965}, it is proved that there exists a polynomial-time algorithm to determine whether a given graph has a perfect matching. However, for $k \geq 3$, it is NP-complete to decide whether a given $k$-uniform hypergraph has a perfect matching (see~\cite{GareyJohnson, K72}). Thus, it is natural to consider sufficient conditions which force a perfect matching; a minimum degree condition, which is called a \emph{Dirac-type} condition because of Dirac's~\cite{dirac1952} classical result on Hamilton cycles in graphs, is one of the most intensively studied~\cite{rr2010, zhao2016}.  
Perfect matchings in \textit{random} graphs and hypergraphs have also attracted considerable interest.  The so-called \textit{Shamir's problem} (see~\cite{erdos1981}) of determining the threshold for the existence of a perfect matching in a random $k$-uniform hypergraph was considered one of the most important problems in probabilistic combinatorics before its resolution by Johansson, Kahn, and Vu~\cite{JKV2008} in 2008.  
Our results in this paper connect these two streams of research.

\subsection{Perfect matchings in Dirac hypergraphs}\label{subsec:pm}

For $d\in \mathbb N$, the \emph{minimum $d$-degree} $\delta_{d}(\cH)$ of a hypergraph~$\cH$ is the minimum of $|\{e \in \cH : v_1,\dots,v_{d} \in e \}|$ among all choices of $d$ distinct vertices $v_1 , \dots , v_{d} \in V(\cH)$. If $\cH$ is $k$-uniform, we also call $\delta_{k-1}(\cH)$ the \emph{minimum codegree}.
For $n \geq k > d \geq 1$, let $m_d(k,n)$ be the minimum integer $D \geq 0$ such that every $k$-uniform $n$-vertex hypergraph $\cH$ with $\delta_{d}(\cH) \geq D$ has an optimal matching, and for each $s \in \{0, \dots, k - 1\}$, let
\begin{equation*}
    \overline{\mu_d}^{(s)} (k) \coloneqq \limsup_{\substack{n \to \infty\\n \equiv s \:{\rm mod}\:k}} \frac{m_d(k,n)}{\binom{n-d}{k-d}}.
\end{equation*}

Determining the value of $m_d(k, n)$, or even just $\overline{\mu_d}^{(s)}(k)$ in many cases, is a major open problem.
R\"{o}dl, Ruci\'nski, and Szemer\'edi~\cite{RRS2008} first proved that $m_{k-1}(k,n) \leq n/2 + o(n)$ for $n \in k\mathbb N$ (in fact, they showed a tight Hamilton cycle exists if this codegree condition holds).
This bound was improved by K\"uhn and Osthus~\cite{KO2006} to $n/2 + 3k^2 \sqrt{n \log n}$, and R\"{o}dl, Ruci\'nski, and Szemer\'edi~\cite{RRS2006_2} improved it further to $n/2 + O(\log n)$.
Finally, R\"{o}dl, Ruci\'nski, and Szemer\'edi~\cite{Rodl2009} determined $m_{k-1}(k,n) = n/2 - k + C(k,n)$ for all sufficiently large $n \in k \mathbb N$, with $C(k,n) \in \{ 3/2 , 2 , 5/2 , 3 \}$ depending on $k$ and $n$. 
R\"{o}dl, Ruci\'nski, Schacht, and Szemer\'edi~\cite{rrms2008} also gave a simple proof for a bound of $n/2 + k/4$, that does not require $n$ to be large.

For $1 \leq d \leq 3k/8$ and $n \in k \mathbb{N}$, both the exact and the asymptotic values of $m_d(k,n)$ are unknown for many cases.
The exact value of $m_d(k,n)$ is known for $d \geq 3k/8$ and large $n \in k\mathbb{N}$ by a combination of results~\cite{FK18, TZ2016}, where  $m_d(k,n) = (1/2 + o(1)) \binom{n-d}{k-d}$ and the exact bound of $m_d(k,n)$ follows from the obstructions called~\emph{divisibility barriers}.
Khan~\cite{khan2013} and independently K\"uhn, Osthus, and Treglown~\cite{KOT2013} showed that $m_1(3,n) = \binom{n-1}{2} - \binom{2n/3}{2}$ for large $n \in 3 \mathbb{N}$. Khan~\cite{khan2016} showed that $m_1(4,n) = \binom{n-1}{3} - \binom{3n/4}{3}$ for large $n \in 4 \mathbb{N}$.
Alon, Frankl, Huang, R\"{o}dl, Ruci\'{n}ski, and Sudakov~\cite{AFHRRS2012} related the asymptotics of $m_d(k,n)$ and $m_d^*(k,n)$, where $m_d^*(k,n)$ is the minimum $D$ such that every $n$-vertex $k$-uniform hypergraph with minimum $d$-degree at least $D$ has fractional matching number $n/k$.
Ferber and Kwan~\cite{FK2022} showed that the limit of $m_d(k,n) / \binom{n-d}{k-d}$ exists as $n \in k \mathbb{N}$ tends to infinity, and it is conjectured~\cite{HPS2009, KO2009} that $\overline{\mu_d}^{(0)}(k) = \max \{ 1/2 , 1 - (\frac{k-1}{k})^{k-d} \}$.  See \cite[Conjecture 1.5]{zhao2016} for the exact conjectured value of $m_d(k, n)$ when $n \in k \mathbb N$ is large.

For the other case $k \nmid n$, R\"{o}dl, Ruci\'nski, and Szemer\'edi~\cite{Rodl2009} showed that $m_{k-1}(k,n) \leq n/k + O(\log n)$, and Han~\cite{Han2015} determined $m_{k-1}(k,n) = \lfloor n/k \rfloor$ for all sufficiently large $n$ (and thus $\overline{\mu_{k-1}}^{(s)}(k) = 1 / k$ for all $s \neq 0$).  
Han~\cite[Conjecture 1.10]{Han2016} conjectured an upper bound on $\overline{\mu_d}^{(s)}(k)$ for all $d,s \in [k - 1]$ and proved a matching lower bound on $m_d(k, n)$.  (Thus, if true, Han's conjecture implies the limit of $m_d(k,n) / \binom{n-d}{k-d}$ exists as $n \in k \mathbb{N} + s$ tends to infinity.)
See~\cite{CGHW22, Han2016, LYY21} for more background on the non-divisible case, and for more discussion of this topic, see  the surveys~\cite{rr2010, zhao2016}.

\subsection{Perfect matchings in random hypergraphs}

A \emph{random $k$-uniform $n$-vertex hypergraph} $\cH^k(n,p(n))$ is a $k$-uniform hypergraph on $n$ vertices obtained by choosing each subset of $k$ vertices to be an edge with probability $p(n)$ independently at random.
Regarding the existence of a perfect matching in a random hypergraph, it is natural to ask for the threshold for $\cH^k(kn,p(n))$ to contain a perfect matching.

For $k=2$, in a seminal paper Erd\H{o}s and R\'{e}nyi~\cite{ER1966} determined the (sharp) threshold for $\cH^2(2n,p(n))$ to contain a perfect matching. They showed that the probability that $\cH^k(2n,p(n))$ has a perfect matching tends to 1 if $p(n) = \frac{\log n + \omega(1)}{2n}$ and tends to 0 if $p(n) = \frac{\log n - \omega(1)}{2n}$.

On the other hand, for $k \geq 3$, it is much more difficult to determine the threshold for the appearance of a perfect matching.
In 1979, Shamir (see~\cite{erdos1981, SS1983})\COMMENT{Erd\H{o}s~\cite{erdos1981} only posed it in the $3$-uniform case, but according to Schmidt and Shamir~\cite{SS1983}, they asked him the more general $k$-uniform case.} asked for the threshold for $\cH^k(kn,p(n))$ to contain a perfect matching (a precise and explicit statement was mentioned in~\cite{CFMR1996}). 
Schmidt and Shamir~\cite{SS1983} showed that \textit{asymptotically almost surely} (which we abbreviate as a.a.s.)  $\cH^k(kn , p(n))$ has a perfect matching if $p(n) = \omega(n^{-k + 3/2})$. This was further improved by Frieze and Janson~\cite{FJ1995} to $p(n) = \omega(n^{-k + 4/3})$.
Finally, Johansson, Kahn, and Vu~\cite{JKV2008} proved that the threshold for $\cH^k(kn,p(n))$ to contain a perfect matching is $\Theta(n^{-k+1} \log n)$, matching the threshold for $\cH^k(kn,p(n))$ not to contain an isolated vertex. Recently, Kahn~\cite{kahn2019} determined the sharp threshold for $\cH^k(kn,p(n))$ to contain a perfect matching, as well as the hitting time result~\cite{kahn2022}, which proves the conjecture in~\cite{CFMR1996} in a stronger form.

\subsection{Robust version of Dirac-type theorems}
For any hypergraph $\cH$ and $p \in [0,1]$, let $\cH_p$ be a  spanning random subhypergraph of $\cH$ obtained by choosing each edge $e \in \cH$ with probability $p$ independently at random.
The problem of determining whether a certain property of the original hypergraph~$\cH$ is retained by~$\cH_p$ has been studied extensively~\cite{ABCDJMRS2022, AK2022, CEGKO2020, GLS2021, johansson2020, KLS2014, KLS2015, PSSS22}, and results of this nature are referred to as \textit{robustness} results~\cite{sudakov2017}.
For example, 
Krivelevich, Lee, and Sudakov~\cite{KLS2014} showed a robust version of Dirac's theorem that for every $n$-vertex graph $G$ with minimum degree at least $n/2$, \aas its random subgraph $G_p$ contains a Hamilton cycle for $p = p(n) \geq C \log n / n$ for some absolute constant $C>0$,
providing a common generalization of Dirac's theorem~\cite{dirac1952} (when $p = 1$) and the classic result of P\'{o}sa~\cite{posa1976} (when $G \cong K_n)$ on the threshold for the appearance of a Hamilton cycle in a random graph.



Our first result is the following robust version of hypergraph Dirac-type results on $m_d(k, n)$ in the general case $1 \leq d \leq k-1$.  Here, the integer $n$ is not necessarily divisible by $k$.
The case $k=2$ follows from the result by Krivelevich, Lee, and Sudakov~\cite{KLS2014} on the robust Hamiltonicity.
 

\begin{theorem}\label{thm:main_general}
Let $d,k,s \in \mathbb{Z}$ such that $k \geq 3$, $1 \leq d \leq k-1$, and $0 \leq s \leq k - 1$.
For every $\gamma > 0$, there exists $C>0$ such that the following holds for $n \in k\mathbb{N} + s$  and $p = p(n) \in [0,1]$ with $p \geq C \log n / n^{k-1}$.
If $\cH$ is a $k$-uniform $n$-vertex hypergraph with $\delta_{d}(\cH) \geq \left(\overline{\mu_d}^{(s)}(k) + \gamma\right) \binom{n-d}{k-d}$, then \aas a random subhypergraph~$\cH_p$ contains an optimal matching.
\end{theorem}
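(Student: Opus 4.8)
The plan is to reduce Theorem~\ref{thm:main_general} to constructing a sufficiently ``spread'' probability distribution on the optimal matchings of $\cH$, and then to invoke the spread lemma (the fractional Kahn--Kalai theorem of Frankston, Kahn, Narayanan, and Park; the Park--Pham theorem would serve as well). Concretely, an optimal matching of $\cH$ is a subset of $E(\cH)$ of size $\lfloor n/k\rfloor\le n$, and $|E(\cH)|\le n^k$, so once the set of optimal matchings is equipped with a $q$-spread distribution for some $q=O(n^{-k+1})$, the spread lemma guarantees that for $p\ge Cq\log n\ge C\log n/n^{k-1}$ (with $C$ large) the random subhypergraph $\cH_p$ \aas contains a member --- that is, an optimal matching. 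That the set of optimal matchings is nonempty, and moreover that the degree hypothesis exceeds the matching threshold by a positive proportion of $\binom{n-d}{k-d}$, both follow for large $n$ from the definitions of $m_d(k,n)$ and $\mu_d^{(s)}(k)$; this slack is what will make the construction possible. (As a byproduct, feeding $S=M$ into the definition of $q$-spreadness forces $\mu(M)\le q^{\lfloor n/k\rfloor}$ for every optimal matching $M$, which already gives the lower bound $\exp((1-1/k)n\log n-\Theta(n))$ on the number of optimal matchings stated in the abstract, complementing the trivial upper bound $\binom{n}{k}^{\lfloor n/k\rfloor}/\lfloor n/k\rfloor!\le\exp((1-1/k)n\log n+\Theta(n))$.)

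Building the $O(n^{-k+1})$-spread distribution is the heart of the matter, and I would do it by the absorbing method, randomising every choice so that no edge is forced into the output with probability exceeding $O(n^{-k+1})$ (a single fixed absorbing matching would be used with probability $1$ and is therefore useless for spreadness). First, using the $d$-degree hypothesis and the by-now-standard absorber gadgets for hypergraph matchings, select a random absorbing matching $M_{\mathrm{abs}}$ on a vertex set $V_{\mathrm{abs}}$ with $|V_{\mathrm{abs}}|=\alpha n$, where $\beta\ll\alpha\ll\gamma$, drawn from a family of candidate absorbers rich enough that the induced edge-distribution is $O(n^{-k+1})$-spread, and arranged so that for \emph{every} set $W$ of at most $\beta n$ vertices outside $V_{\mathrm{abs}}$ of the appropriate residue there is an $O(n^{-k+1})$-spread distribution on matchings covering all of $V_{\mathrm{abs}}\cup W$. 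Second, on $V':=V(\cH)\setminus V_{\mathrm{abs}}$ the induced hypergraph $\cH[V']$ inherits a Dirac-type condition with room to spare; a nibble argument (for instance, randomised rounding of a suitably spread fractional perfect matching, the slack $\gamma$ supplying the near-regularity needed) then produces an $O(n^{-k+1})$-spread distribution on matchings of $\cH[V']$ leaving at most $\beta n$ vertices uncovered. Finally, let $W$ be the uncovered set of the second matching, absorb it using $M_{\mathrm{abs}}$, and output the union (adjusting residues so that exactly $s$ vertices remain uncovered in total); since the two matchings are supported on essentially disjoint edge sets, a short computation shows the composite distribution, with the absorption step folded in, is again $O(n^{-k+1})$-spread.

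I expect the crux to be the spreadness bookkeeping, in two places. First, one must show the near-perfect matching on $\cH[V']$ is spread with exponent exactly $n^{-(k-1)}$, not $n^{-(k-1)}\,\mathrm{polylog}\,n$: for a fixed edge set $S$ one bounds the probability that all of $S$ is selected by a product of ``inverse local degree'' factors, which requires that the relevant degree stay $\Omega(n^{k-1})$ throughout the process (this is where the proportional slack $\gamma$ is spent) and a careful treatment of the dependencies among the $|S|$ events ``$e\in S$ is selected'', e.g.\ by first revealing which step of the process captures each $e\in S$. Second, one must produce a random absorbing matching that is simultaneously robustly absorbing and spread, since absorber gadgets are naturally rigid; the standard device is to build a large collection of (nearly) disjoint absorbers and activate a random sparse subcollection. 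A minor additional point, relevant only when $d<k-1$, is transferring the $d$-degree hypothesis to the codegree-type estimates needed by the nibble and the absorbers, which can be done by supersaturation at the cost of a fraction of $\gamma$.
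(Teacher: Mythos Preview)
Your reduction to a $q$-spread distribution with $q=O(n^{-(k-1)})$ and an application of the Frankston--Kahn--Narayanan--Park theorem is exactly what the paper does (Theorems~\ref{thm:fknp} and~\ref{thm:main_general_fknp}), and your side remark about counting is their Corollary~\ref{cor:num_pm}. The construction of the spread measure, however, is carried out quite differently in the paper, and your sketch of that step has real gaps.

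The paper does \emph{not} use classical absorption at all for Theorem~\ref{thm:main_general_fknp}. Instead it uses \emph{iterative absorption via a vertex vortex}: a random partition $(U_1,\dots,U_\ell)$ with $|U_{i+1}|\sim |U_i|/2$ and $|U_\ell|=O(n^{1/k})$. Inside each $U_i$ they obtain a spread almost-perfect matching not by a nibble, but by applying the \emph{weak hypergraph regularity lemma} and then greedily sampling a spread matching inside each $\eps$-regular $k$-tuple of clusters (Lemmas~\ref{lem:random_M_in_k_tuple} and~\ref{lem:M_in_votex_set}); the reduced hypergraph has an almost-perfect matching by Lemma~\ref{lem:almostPM_general}, which only uses the \emph{definition} of $m_d(k)$ as a black box. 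Leftover vertices in $U_i$ are greedily covered by edges with $k-1$ vertices in $U_{i+1}$ (Lemma~\ref{lem:M_cover_down}). Finally, in $U_\ell$ one finds an optimal matching \emph{deterministically}: since $|U_\ell|=O(n^{1/k})$, the probability that any fixed edge lies entirely in $U_\ell$ is already $O(n^{-(k-1)})$, and the random subset $U_\ell$ inherits the $d$-degree hypothesis so that $\cH[U_\ell]$ has an optimal matching simply because $\delta_d\ge m_d(k,|U_\ell|)$ (Lemma~\ref{lem:d_deg_OM-stable}). The point is that this scheme never needs an absorber gadget or a structural description of the threshold; it bootstraps directly from the existence statement encoded in $\mu_d^{(s)}(k)$.

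By contrast, your plan relies on two ingredients that are not ``standard'' in the generality required. First, there is no off-the-shelf absorber construction that works uniformly at an \emph{unknown} threshold $\mu_d^{(s)}(k)$ for all $1\le d\le k-1$; the usual gadgets are built from codegree information, and your supersaturation remark does not supply this (a $d$-degree bound above an unknown $\mu_d^{(s)}(k)$ need not imply any useful codegree bound for typical $(k-1)$-sets). Second, ``nibble/randomised rounding of a spread fractional matching'' giving $O(n^{-(k-1)})$-spreadness is precisely the hard part; without a regularity-type decomposition you do not have the near-regular structure the nibble needs, and once you invoke regularity you are essentially back to the paper's Lemma~\ref{lem:M_in_votex_set}. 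So your outline is a plausible alternative program, and something in its spirit was carried out independently by Pham, Sah, Sawhney, and Simkin (see the paper's Remark), but as written it is a sketch with substantive missing steps, and it diverges from the vortex-plus-regularity argument the paper actually gives.
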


Combining this result with the aforementioned prior work determining $m_d(k, n)$ \cite{AFHRRS2012, FK18, Rodl2009, TZ2016}, we simultaneously obtain that for every $\gamma > 0$, as $n\rightarrow\infty$, for $p = \Omega(\log n / n^{k - 1})$,  $\cH_p$ \aas has a perfect matching when $\cH$ is a $k$-uniform $n$-vertex hypergraph satisfying $\delta_d(\cH) \geq (1/2 + \gamma)\binom{n - d}{k - d}$ for some $d \geq 3k / 8$ when $k \mid n$ and that $\cH_p$ \aas has an optimal matching when $\cH$ is a $k$-uniform $n$-vertex hypergraph satisfying $\delta_{k-1}(\cH) \geq (1/k + \gamma)n$ when $k \nmid n$.  
Another interesting feature of this result is that it implies the existence of optimal matchings in random sparsifications of hypergraphs with minimum $d$-degree at least $\left(\overline{\mu_d}^{(s)}(k) + \gamma\right) \binom{n-d}{k-d}$ even in the cases in which the value of $\overline{\mu_d}^{(s)}(k)$ is not known.  
Since $\lim_{n \rightarrow\infty : k \mid n}\left.m_d(k,n)\middle / \binom{n-d}{k-d}\right. = \overline{\mu_d}^{(0)} (k)$~\cite{FK2022}, for $n \in k \mathbb{N}$, the minimum degree condition in Theorem~\ref{thm:main_general} can be replaced by $\delta_d(\cH) \geq m_d(k, n) + \gamma \binom{n - d}{k - d}$.

Our main result is the following robust version of the Dirac-type result by R\"{o}dl, Ruci\'nski, and Szemer\'edi~\cite{Rodl2009}.
\begin{theorem}\label{thm:main_rrs}
Let $k \geq 3$ be an integer. There exists $C>0$ such that the following holds for $n \in k\mathbb{N}$ and $p=p(n) \in [0,1]$ with $p \geq C \log n / n^{k-1}$.
If $\cH$ is a $k$-uniform $n$-vertex hypergraph with $\delta_{k-1}(\cH) \geq m_{k-1}(k,n)$, then \aas a random subhypergraph $\cH_p$ contains a perfect matching.
\end{theorem}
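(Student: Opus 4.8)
The plan is to reduce Theorem~\ref{thm:main_rrs} to a \emph{spreadness lemma} and then apply the fractional Kahn--Kalai theorem of Frankston, Kahn, Narayanan and Park (equivalently, the Park--Pham theorem). The lemma I would aim for is: if $\cH$ is $k$-uniform on $n\in k\mathbb{N}$ vertices with $\delta_{k-1}(\cH)\ge m_{k-1}(k,n)$, then the set of perfect matchings of $\cH$ carries a probability measure $\mu$ that is $q$-spread for $q=O_k(1/n^{k-1})$, i.e.\ $\mu(\{M:\mathcal{F}\subseteq M\})\le q^{|\mathcal{F}|}$ for every set $\mathcal{F}$ of edges of $\cH$. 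Taking $E(\cH)$ as the ground set and the edge sets of perfect matchings (each of size $n/k\le n$) as the target family, FKNP/Park--Pham then yields that a $p$-random subset of $E(\cH)$ -- that is, $\cH_p$ -- \aas contains a perfect matching as soon as $p\ge Cq\log(n/k)$, which is exactly the hypothesis $p\ge C\log n/n^{k-1}$ (for $n$ small compared to $C$ there is no admissible $p$ and nothing to prove). This mirrors the reduction behind Theorem~\ref{thm:main_general}; the divisibility $k\mid n$ and the \emph{tight} degree bound enter only through the spreadness lemma, and a simple edge-counting argument on $\cH=K^{(k)}_n$ shows $q=\Theta(1/n^{k-1})$ is necessary, so this target is best possible.

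For the spreadness lemma I would run the stability dichotomy underlying the exact theorem of R\"{o}dl--Ruci\'nski--Szemer\'edi ($m_{k-1}(k,n)=n/2-k+C(k,n)$). Fix small $\xi>0$. In the \emph{non-extremal case}, where $\cH$ is not $\xi$-close to the divisibility-barrier configuration, a stability argument upgrades $\delta_{k-1}(\cH)\ge n/2-O(1)$ to a genuinely robust matchability property, after which $\mu$ is produced by the recipe also used for Theorem~\ref{thm:main_general}: choose a random family of constant-size absorbing gadgets whose union is an absorbing structure, with the selection itself spread; on the remainder build a near-perfect matching by a random greedy (nibble-type) process, using that deleting an edge costs only $k$ in codegree and that robustness keeps $\Theta(m^k)$ edges available throughout, so each still-available edge is taken with probability $O(1/m^k)$ at the step with $m$ vertices left; and finally absorb the $o(n)$ leftover. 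A routine telescoping estimate (a fixed edge survives to the $m$-vertex step with probability $O((m/n)^k)$, summed over $O(n)$ steps) gives per-edge marginal $O(1/n^{k-1})$, and tracking the greedy step by step shows the bound multiplies over disjoint $\mathcal{F}$. I do not expect this case to be the main difficulty.

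The crux is the \emph{extremal case}: $\cH$ is $\xi$-close to a divisibility-barrier hypergraph, so there is a near-balanced partition $V(\cH)=A\cup B$ with $|A|$ in a prescribed residue class such that almost all ``allowed'' $k$-sets -- those meeting $A$ in the good number of vertices -- lie in $\cH$ while almost none of the ``forbidden'' ones do. Here $\cH$ need only have about $\exp((1-1/k)n\log n-\Theta(n))$ perfect matchings, and these are heavily constrained by the allowed/forbidden pattern, so the spread measure must be designed rather than taken uniform over all perfect matchings. I would: (1) locate the $o(n)$ exceptional vertices (those lying in atypically few allowed edges), cover them, and \emph{simultaneously correct the residue of $|A|$}, all using a bounded number of suitably chosen edges of $\cH$, selected spreadly; (2) on the cleaned partition $A'\cup B'$, which now carries an essentially complete allowed hypergraph, build a spread measure on its perfect matchings by composing a spread choice of how the $n/k$ edges distribute their vertices between $A'$ and $B'$ (a composition/lattice-point count that itself admits a spread distribution) with the near-uniform, hence $O(1/m^k)$-spread, distributions on perfect matchings of the complete $k$-graphs on $A'$ and on $B'$. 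The two points to verify are that $\delta_{k-1}(\cH)\ge m_{k-1}(k,n)$ together with the near-extremal structure really does supply the few forbidden-type (or exceptional) edges needed for step (1) to be possible at all -- precisely the delicate issue in the original RRS exact argument -- and that this small, rigidly forced first stage, involving only $O(1)$ edges, inflates any $\mu(\{M:\mathcal{F}\subseteq M\})$ by at most a bounded power of $O(1/n^{k-1})$, so the composed measure is still $O(1/n^{k-1})$-spread. Pinning down the exact list of extremal configurations, the quantitative stability statement feeding the dichotomy, and this parity-correction-with-spreadness step is where I expect the real work to lie.
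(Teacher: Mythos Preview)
Your high-level reduction is exactly the paper's: Theorem~\ref{thm:main_rrs} is deduced from a $(C/n^{k-1})$-spread measure on perfect matchings (Theorem~\ref{thm:main_rrs_fknp}) via FKNP (Theorem~\ref{thm:fknp}), and the spreadness proof splits along the RRS stability dichotomy. Where you diverge is in the mechanism for producing spread. You propose ``spread absorbers $+$ random greedy on all of $\cH$''; the paper instead runs \emph{iterative absorption} along a random \emph{vertex vortex} $(U_1,\dots,U_\ell)$ with $|U_{i+1}|\sim|U_i|/2$ and $|U_\ell|=O(n^{1/k})$. In each $U_i$ the \emph{weak hypergraph regularity lemma} (not a greedy process) yields a well-spread almost-perfect matching (Lemmas~\ref{lem:random_M_in_k_tuple}--\ref{lem:M_in_votex_set}); leftovers are covered by edges reaching into $U_{i+1}$ (Lemma~\ref{lem:M_cover_down}); and on $U_\ell$ the matching is found \emph{deterministically}, spreadness being free because $\Prob{e\subseteq U_\ell}\le p_\ell^k=O(1/n^{k-1})$. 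This confines the entire RRS case analysis to the single question of whether a $p_\ell$-random set is ``OM-stable'' (Definition~\ref{def:om_stable}, Lemmas~\ref{lem:non-extremal_is_OM_stable} and~\ref{lem:extremal_OM}). The payoff is that the paper never has to argue that a greedy process on a codegree-$(n/2-O(1))$ hypergraph maintains $\Theta(m^k)$ edges or that survival probabilities behave like $(m/n)^k$; those claims are exactly what the vortex-plus-regularity machinery is built to avoid.

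Your extremal plan is also structurally different and contains the step I would flag. You propose to build spread on the cleaned near-critical hypergraph by ``composing a spread choice of intersection profile with near-uniform matchings of the complete $k$-graphs on $A'$ and on $B'$''. But the typical edges of $\cH^0(k,A,B)$ \emph{cross} the partition with prescribed parity; a perfect matching there does not factor into independent matchings on $A'$ and $B'$, and you will also have to contend with the $O(\eps n^k)$ missing typical edges, not just with exceptional vertices. The paper does something much lighter: it chooses one of $\Omega(n^{k-1})$ \emph{atypical} edges $e^*$ (Lemma~\ref{lem:manyatypical}) to correct divisibility, restricts to the spanning subhypergraph $\cH'\subseteq\cH-V(e^*)$ of typical edges, and then runs the \emph{same} vortex machinery on $\cH'$; the only extremal-specific work is verifying that the induced hypergraph on $U_\ell$ (minus a few covered vertices) still meets the hypotheses of the RRS extremal perfect-matching lemma (Theorem~\ref{thm:perfectmatching_extremal}). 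So the paper never constructs the spread measure on the critical configuration by hand---it bootstraps from the deterministic RRS result applied inside a set of size $O(n^{1/k})$.
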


The value of $p$ in both Theorems~\ref{thm:main_general} and~\ref{thm:main_rrs} is asymptotically best possible, since it is well known that \aas there are $\omega(1)$ isolated vertices in a random $k$-uniform $n$-vertex hypergraph $\cH^k(n,p)$ if $p \leq \frac{(k-1)! \log n - \omega(1)}{n^{k-1}}$.  In fact, both results generalize Johansson, Kahn, and Vu's~\cite{JKV2008} solution to Shamir's problem that the threshold for the existence of a perfect matching in $\cH^k(kn, p(n))$ is $\Theta(n^{-k+1}\log n)$.  

We remark that Theorems~\ref{thm:main_general} and~\ref{thm:main_rrs} are implied by Theorems~\ref{thm:main_general_fknp} and~\ref{thm:main_rrs_fknp} below, respectively.

\subsection{Spreadness and a lower bound on the number of perfect matchings}

To prove Theorems~\ref{thm:main_general} and \ref{thm:main_rrs}, we use the fractional version of the Kahn--Kalai conjecture~\cite{kk2006} (conjectured by Talagrand~\cite{Ta10}), recently  resolved by Frankston, Kahn, Narayanan, and Park~\cite{Frankston2021}. The Kahn--Kalai conjecture was recently proved in full by Park and Pham~\cite{pp2022}, but the fractional version is sufficient for our application.  A precursor to these results was the main technical ingredient in Alweiss, Lovett, Wu, and Zhang's~\cite{ALWZ2021} breakthrough on the Erd\H{o}s--Rado sunflower conjecture~\cite{ER60}, and the results have been found to have many additional applications.  
This paper, and the independent work of Pham, Sah, Sawhney, and Simkin~\cite{PSSS22} (discussed further in the remark at the end of this subsection), are the first to demonstrate an application of the result to robustness of Dirac-type results. 

The Frankston--Kahn--Narayanan--Park theorem implies the Johansson--Kahn--Vu solution to Shamir's problem.  Moreover, it reduces our problem to proving that there exists a probability measure on the set of perfect or optimal matchings that is `well-spread'.  Roughly speaking, this means that the probability measure chooses a perfect matching at random in such a way that no particular set of edges is very likely to be contained in the matching.




\begin{definition}[Spreadness]
    Let $\cH$ be a $k$-uniform hypergraph and $q \in [0,1]$. Let $\nu$ be a probability measure on the set of matchings of $\cH$, and let $M$ be a matching in $\cH$ chosen at random according to $\nu$. We say that $\nu$ is \emph{$q$-spread} if for each $s \geq 1$ and $e_1, \dots, e_s \in \cH$, we have 
    \[
        \pr{e_1, \dots, e_s \in M} \leq q^s.
    \]
\end{definition}


The next theorem follows from~\cite[Theorem 1.6]{Frankston2021}. More precisely, it follows from the derivation of~\cite[Theorem 1.1]{Frankston2021} from~\cite[Theorem 1.6]{Frankston2021}.

\begin{theorem}[Frankston, Kahn, Narayanan, and Park~\cite{Frankston2021}]\label{thm:fknp}
    There exists $K > 0$ such that the following holds.
    Let $\cH$ be a $k$-uniform $n$-vertex hypergraph and $q \in [0,1]$. 
    If there exists a $q$-spread probability measure on the set of optimal matchings of $\cH$ and $p \geq K q \log n$, then \aas there exists an optimal matching in $\cH_p$.
\end{theorem}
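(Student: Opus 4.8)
The plan is to read the statement off from \cite[Theorem~1.6]{Frankston2021} by setting up the right dictionary between optimal matchings and subsets of the edge set. Concretely, I would take the ground set to be $X \coloneqq E(\cH)$, and identify each optimal matching $M$ of $\cH$ with its edge set, viewed as an element of $2^X$; let $\mathcal{F} \subseteq 2^X$ be the family of all such sets. Then the up-set generated by $\mathcal{F}$ is exactly the increasing event ``contains an optimal matching of $\cH$'', and if $X_p$ denotes the $p$-random subset of $X$, then the edge set of $\cH_p$ is precisely $X_p$, so $\cH_p$ contains an optimal matching if and only if $X_p$ lies in this up-set. Every member of $\mathcal{F}$ has size exactly $\lfloor n/k\rfloor$, so $\mathcal{F}$ is $\ell$-bounded with $\ell \coloneqq \lfloor n/k\rfloor \le n$; this last inequality is the one quantitative point worth recording, since it is what lets one replace the $\log\ell$ appearing in \cite{Frankston2021} by $\log n$.

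Next I would check that the two notions of spreadness coincide. Given a probability measure $\nu$ on the set of optimal matchings of $\cH$ that is $q$-spread in the sense of the Spreadness definition, and a set $S = \{e_1,\dots,e_s\}$ of $s \ge 1$ distinct edges of $\cH$, the $\nu$-measure of $\{F \in \mathcal{F} : S \subseteq F\}$ equals $\Pr[e_1,\dots,e_s \in M] \le q^s = q^{|S|}$, while the case $S = \emptyset$ is trivial. Hence $\nu$ is a $q$-spread probability measure on $\mathcal{F}$ in the sense of \cite{Frankston2021}, and the hypothesis of Theorem~\ref{thm:fknp} is exactly the hypothesis of \cite[Theorem~1.6]{Frankston2021} applied to the family $\mathcal{F}$.

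Finally I would invoke \cite[Theorem~1.6]{Frankston2021}, or rather the argument by which \cite[Theorem~1.1]{Frankston2021} is deduced from it -- which (roughly) splits $X_p$ into $\Theta(\log\ell)$ independent sparser random subsets and uses them one at a time to absorb successive pieces of some member of $\mathcal{F}$ while keeping the residual family spread -- to obtain a universal constant $K_0 > 0$ such that $p \ge K_0 q \log\ell$ forces $X_p$ to \aas contain a member of $\mathcal{F}$. Since $\ell \le n$ gives $\log\ell \le \log n$, taking $K \coloneqq K_0$ yields that for $p \ge Kq\log n$ the hypergraph $\cH_p$ \aas contains an optimal matching, as required.

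I do not expect a genuine obstacle in this argument: the whole probabilistic difficulty is packaged inside \cite{Frankston2021}, and on our side the only things needing attention are fixing the matchings-as-edge-sets dictionary, confirming that the paper's spreadness notion agrees with theirs, and recording the bound $\ell \le n$. The real work lies elsewhere in the paper, namely in constructing, for $\cH$ as in Theorems~\ref{thm:main_general} and~\ref{thm:main_rrs}, a $q$-spread measure on optimal matchings with $q$ of order $n^{-k+1}$, which is what upgrades the Johansson--Kahn--Vu bound to the robust statements.
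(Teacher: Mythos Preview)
Your proposal is correct and matches the paper's own treatment: the paper does not give a standalone proof but simply notes that the statement follows from \cite[Theorem~1.6]{Frankston2021}, more precisely from the derivation of \cite[Theorem~1.1]{Frankston2021} from \cite[Theorem~1.6]{Frankston2021}. Your dictionary (ground set $X = E(\cH)$, $\mathcal F$ the family of optimal matchings, $\ell = \lfloor n/k\rfloor \le n$, and the verification that the two spreadness notions agree) is exactly the translation this reduction requires.
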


In particular, by Theorem~\ref{thm:fknp}, it suffices to prove the following results to deduce Theorems~\ref{thm:main_general} and~\ref{thm:main_rrs}, respectively.

\begin{theorem}\label{thm:main_general_fknp}
Let $d,k,s \in \mathbb{Z}$ such that $k \geq 3$, $1 \leq d \leq k-1$, and $0 \leq s \leq k - 1$.
For every $\gamma > 0$, there exist $C>0$ and $n_0 \in \mathbb{N}$ such that the following holds for all $n \in k\mathbb{N} + s$ with $n \geq n_0$.
For every $k$-uniform $n$-vertex hypergraph~$\cH$ with $\delta_{d}(\cH) \geq \left(\overline{\mu_d}^{(s)}(k) + \gamma\right) \binom{n-d}{k-d}$, there exists a probability measure on the set of optimal matchings in~$\cH$ which is $(C/n^{k-1})$-spread.
\end{theorem}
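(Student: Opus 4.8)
The goal is to put a probability measure on the set of optimal matchings of $\cH$ under which no $s$ edges appear together with probability more than $(C/n^{k-1})^s$. Since $\Theta(n^{-(k-1)})$ is exactly the ``typical'' probability that a fixed $k$-edge lies in an optimal matching of a dense host hypergraph, the plan is to build the measure by a random greedy (semi-random/nibble-type) edge-selection process, so that $q$-spreadness can be read off directly from the number of available choices at each step, and to handle the last few vertices by the absorbing method (or, alternatively, by recursion).

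\textbf{Step 1: robustness of the degree condition.} First I would record that the hypothesis $\delta_d(\cH)\ge(\mu_d^{(s)}(k)+\gamma)\binom{n-d}{k-d}$ is inherited, up to a $(1-o(1))$ factor, by induced subhypergraphs on ``pseudorandom'' vertex sets: if $W\subseteq V(\cH)$ has $|W|=m$ and behaves like a uniformly random set of density $m/n$ (in particular if $W$ is literally uniformly random, or is the vertex set left uncovered by a suitably controlled random process), then a.a.s.\ $\delta_d(\cH[W])\ge(\mu_d^{(s)}(k)+\gamma-o(1))\binom{m-d}{k-d}$, and likewise $\delta_{d'}(\cH[W])=\Omega(m^{k-d'})$ for every $d'\le d$; this is a Chernoff bound plus a union bound over $d$-sets (and over the possible ``shapes'' of the process). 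Since $\mu_d^{(s)}(k)$ is a $\limsup$, this first bound exceeds $m_d(k,m)$ once $m$ is large, so such $\cH[W]$ has an optimal matching. It is essential to keep $|W|\equiv s\pmod k$ throughout, since $\mu_d^{(s)}(k)$ can be strictly smaller than $\mu_d^{(0)}(k)$ and one must stay in the same residue class to retain the hypothesis.

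\textbf{Step 2: the random greedy matching and the endgame.} With a small ``absorbing'' structure set aside in advance, I would run the following process on the rest of $\cH$: take a uniformly random ordering of the vertices, process them in this order, and whenever the current vertex $v$ is still uncovered, add a uniformly random edge through $v$ disjoint from the edges chosen so far. By Step 1 the uncovered set stays pseudorandom, so the hypergraph it induces keeps the degree condition; hence, as long as $m\ge\log n$ vertices remain uncovered, the current vertex lies in $\Omega(m^{k-1})$ valid edges. This both prevents the process from getting stuck and drives it down to a leftover $L$ with $|L|\le(\log n)^{O(1)}$ (failing only with probability $o(1)$, on which we condition at the cost of a constant factor in $q$). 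The small leftover $L$ is then completed to an optimal matching using the reserved absorbing structure, which is built from a random family of vertex-disjoint local gadgets so that for \emph{every} admissible small set $L'$ the union $A\cup L'$ has a perfect matching realised by a product of many independent local choices; alternatively one simply recurses, applying the theorem to the $o(n)$-vertex hypergraph $\cH[L]$, since $|L|<n$ and $|L|\equiv s\pmod k$ with the degree hypothesis (essentially) intact.

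\textbf{Step 3: spreadness, and the main obstacle.} A fixed edge $f$ can be selected only at the step processing its lowest-ranked vertex, at which there are $\Omega(m^{k-1})$ choices whenever $m\ge\log n$ vertices remain; conversely, for $f$ to be used when fewer than $m$ vertices remain, all $k$ of its vertices must survive that long, which has probability $O((m/n)^k)$ — and likewise for the rare event that $f$ lies inside the absorber. Splitting $\Pr[f\in M]$ according to the number $m$ of uncovered vertices at the moment $f$ is used, a fixed dyadic block contributes $O((m/n)^k)\cdot O(m^{-(k-1)})=O(m/n^k)$, and summing the geometric series gives $\Pr[f\in M]=O(n^{-(k-1)})$; the $\le\log n$ tail and the absorber contribute $O((\log n/n)^k)=o(n^{-(k-1)})$. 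For $s$ edges one runs this bookkeeping for each edge simultaneously, using that the process's choices are negatively correlated enough to make the relevant survival-and-selection events sub-multiplicative across disjoint edges, obtaining $\Pr[f_1,\dots,f_s\in M]\le(C/n^{k-1})^s$. I expect the genuine difficulties to be (i) establishing the pseudorandomness of the uncovered set throughout the greedy process, so that the degree hypothesis — and hence the lower bound on the number of valid choices — really does persist down to a polylogarithmic leftover (a martingale/differential-equation concentration analysis of the random greedy matching), and (ii) the $s$-edge estimate, i.e.\ making the ``product over edges'' rigorous in the presence of the conditioning and of the absorbing step, rather than only for a single edge.
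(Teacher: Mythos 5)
Your proposal takes a genuinely different route from the paper (one-shot random greedy process plus an absorber/recursion for the endgame), but as it stands it has gaps at exactly the points you flag, and these are not routine to fill. First, the inheritance of the degree hypothesis by the uncovered set is argued as if that set were a uniformly random subset; but the leftover of an adaptive greedy matching process is not a random set, and maintaining $\delta_d(\cH[W])\ge(\mu_d^{(s)}(k)+\gamma-o(1))\binom{|W|-d}{k-d}$ along the trajectory all the way down to a polylogarithmic leftover is precisely the unproven crux (a Chernoff-plus-union-bound argument does not apply, and even for genuinely random $W$ of polylogarithmic size the concentration available is too weak to beat the union bound over $d$-sets). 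Second, the endgame is problematic: under only the hypothesis $\delta_d(\cH)\ge(\mu_d^{(s)}(k)+\gamma)\binom{n-d}{k-d}$ --- where $\mu_d^{(s)}(k)$ is unknown in most cases --- building an absorbing structure of polylogarithmic size that completes every admissible leftover is itself a hard open-type problem, not a standard gadget construction; and the recursion alternative again presupposes that the (non-random) leftover inherits the hypothesis. Third, the $s$-edge spreadness is asserted via ``negative correlation'' of survival-and-selection events across the adaptive process, which is exactly the kind of factorization the conditioning destroys and which needs a real argument.

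The paper's proof is organized to avoid all three issues. It first partitions $V(\cH)$ independently at random into a vertex vortex $(U_1,\dots,U_\ell)$ with $|U_\ell|=\Theta(n^{1/k})$; within each level it gets a well-spread almost-perfect matching not from a global greedy trajectory but from the weak hypergraph regularity lemma (spreadness inside an $\eps$-regular $k$-tuple needs only that $\Omega(n_*^{k-1})$ continuations are available at each step, with an $O(\eps^{1/k})$ fraction left over), and it covers each level's leftover by edges into the \emph{fresh} next level (the cover-down lemma), so no fine pseudorandomness of a process's leftover is ever needed. The final completion is deterministic: the remaining vertices lie in the truly random set $U_\ell$, which a.a.s.\ inherits $\delta_d\ge(\mu_d^{(s)}(k)+\gamma/4)\binom{|U'|-d}{k-d}\ge m_d(k,|U'|)$ in the correct residue class, so an optimal matching exists by the very definition of $m_d(k,\cdot)$ --- no absorber is constructed --- and spreadness of this step costs nothing because the probability that any fixed edge lands in $U_\ell$ is already $O(n^{-(k-1)})$, with the multi-edge bound factorizing cleanly since vertices are assigned to levels independently. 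If you want to salvage your approach, you would need to either prove the trajectory/correlation statements for the greedy process or, more simply, adopt the paper's device of finishing inside an independently chosen random set of size $\Theta(n^{1/k})$ rather than the process's own leftover.
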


\begin{theorem}\label{thm:main_rrs_fknp}
Let $k \geq 3$ be an integer. There exist $C>0$ and $n_0 \in \mathbb{N}$ such that the following holds for all integers $n \geq n_0$ divisible by $k$.
For every $k$-uniform $n$-vertex hypergraph $\cH$ with $\delta_{k-1}(\cH) \geq m_{k-1}(k,n)$, there exists a probability measure on the set of perfect matchings in $\cH$ which is $(C/n^{k-1})$-spread.
\end{theorem}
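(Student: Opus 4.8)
The plan is to use the absorbing method with every choice randomised, so that by Theorem~\ref{thm:fknp} it is enough to exhibit a $(C/n^{k-1})$-spread random perfect matching of $\cH$; write $N := m_{k-1}(k,n) = n/2 - O(1)$. The first step is to establish a \textbf{robust random absorber}. Using $\delta_{k-1}(\cH) \ge N$, and in particular the structural information behind R\"{o}dl--Ruci\'nski--Szemer\'edi's determination of $m_{k-1}(k,n)$, one should find a constant $\beta>0$ and a $(C_1/n^{k-1})$-spread distribution over matchings $M^{\mathrm{abs}}$ of $\cH$, with $A := V(M^{\mathrm{abs}})$ of size $\Theta(n)$ and $|A| \equiv 0 \pmod k$, such that for \emph{every} $W \subseteq V(\cH)\setminus A$ with $|W|\le \beta n$ and $k \mid |W|$ the hypergraph $\cH[A \cup W]$ has a perfect matching --- indeed enough of them that a completion can itself be drawn from a $(C_1/n^{k-1})$-spread distribution depending only on $W$. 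The mechanism is the standard one: the codegree condition forces each relevant $k$-set $T$ to have $\Omega(n^{\ell})$ ``$T$-absorbers'' (constant-size sets $B$, disjoint from $T$, with $\cH[B]$ and $\cH[B \cup T]$ both perfectly matchable), and one selects a vertex-disjoint family of these by a random greedy procedure, the $\Omega(n^{\ell})$ options at each step yielding the spread bound.

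The second step is to \textbf{cover the bulk spread-randomly}. On $\cH' := \cH - A$ I would build a matching $M^{\mathrm{grd}}$ by a random greedy process: maintaining the uncovered set $U$, repeatedly add a uniformly random edge of $\cH'[U]$. As long as $\cH'[U]$ contains $\Omega(n^k)$ edges --- which the codegree bound guarantees as long as every $(k-1)$-subset of $U$ still has $\Omega(n)$ extensions inside $U$ --- each fixed edge is added at a given step with probability $O(n^{-k})$, and a routine conditioning argument over the $O(n)$ steps shows $M^{\mathrm{grd}}$ is $(C_2/n^{k-1})$-spread. Writing $W$ for the final uncovered set (with $k \mid |W|$, arranged via the residue of $|A|$) and completing $M^{\mathrm{abs}} \cup M^{\mathrm{grd}}$ through Step~1 gives a perfect matching $M$ of $\cH$; splitting any given $e_1,\dots,e_s \in \cH$ across the three phases and multiplying the per-phase spread bounds --- the phases' internal randomness being independent, and the completion spread uniformly over the realisations of $W$ --- yields $\pr{e_1,\dots,e_s \in M} \le (C/n^{k-1})^s$.

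The \textbf{main obstacle} is reconciling these two steps. Because $\delta_{k-1}(\cH)$ is only $\approx n/2$, the greedy of Step~2 can be forced to stop while $\Omega(n)$ vertices remain uncovered --- the adversary may hide an independent set of size $\approx n/2$, or build a divisibility barrier --- so that $|W| = \Omega(n)$ outstrips the $\beta n$ that any bounded static absorber can handle. I expect the fix to be a stability dichotomy driven by R\"{o}dl--Ruci\'nski--Szemer\'edi's classification of the near-extremal hypergraphs. If $\cH$ is $\varepsilon$-far from every such configuration, one shows it enjoys strong robust-matching properties --- enough that a suitably constrained greedy (only ever adding an edge whose removal keeps $\cH[U]$ in the relevant class) covers all but $o(n)$ vertices, bringing $|W|$ into the regime Step~1 can absorb. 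If instead $\cH$ is $\varepsilon$-close to an extremal configuration, it carries an approximate bipartition $V = V_0 \cup V_1$ with a parity constraint on how edges may meet $V_1$; here I would construct the spread measure on perfect matchings directly, picking random edges that respect the still linearly large remaining parity budget on $V_1$ --- again $\Omega(n^k)$ choices per step --- and finishing with an absorber tailored to the bipartite-like structure. Verifying that the completions and absorbers in both branches can be taken spread-randomly, and that the construction composes into a single $(C/n^{k-1})$-spread measure, is where the real work lies; as a bonus, any such measure on the $(n/k)$-edge perfect matchings of $\cH$ also yields the advertised $\exp\!\big((1-1/k)n\log n - \Theta(n)\big)$ lower bound on their number.
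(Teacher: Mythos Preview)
Your high-level strategy --- stability dichotomy into extremal and non-extremal cases, absorption for the leftover, randomising every choice --- is in the right spirit, but the paper's technical framework differs substantially, and your Step~1 as written has a real gap. The paper does not build a single $\Theta(n)$-size absorber followed by a one-shot bulk cover. Its engine (Lemma~\ref{lem:spread_OM}) is \emph{iterative absorption} via a random vertex vortex $(U_1,\dots,U_\ell)$ with $|U_{i+1}|\approx|U_i|/2$ and $\ell\approx\frac{k-1}{k}\log_2 n$, so $|U_\ell|=O(n^{1/k})$. At each level a spread almost-perfect matching is obtained through the weak hypergraph regularity lemma (Lemmas~\ref{lem:random_M_in_k_tuple} and~\ref{lem:M_in_votex_set}), which only needs most $(k-1)$-sets to have codegree exceeding $(1/k+o(1))|U_i|$ and so sidesteps your stalling obstacle entirely; the $o(|U_i|)$ leftover is then pushed into $U_{i+1}$ by a greedy cover-down (Lemma~\ref{lem:M_cover_down}). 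The stability analysis and absorber live \emph{entirely inside $U_\ell$}: there the R\"odl--Ruci\'nski--Szemer\'edi absorber has size only $O(\log^4|U_\ell|)$, absorbs a single $k$-set, and the completion on $U_\ell$ is \emph{deterministic}. Spreadness for those edges is automatic because $\Pr[e\subseteq U_\ell]\le p_\ell^k=O(n^{-(k-1)})$ --- the randomness comes from the vortex, not from randomising the absorption. In the extremal case the paper first draws one atypical edge $e^*$ uniformly from $\Omega(n^{k-1})$ candidates (Lemma~\ref{lem:manyatypical}) and then runs the same vortex argument on a suitable subhypergraph of $\cH-V(e^*)$ engineered to preserve the divisibility condition (Lemma~\ref{lem:extremal_OM}, Theorem~\ref{thm:perfectmatching_extremal}).

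The concrete gap in your sketch is the assertion that the completion of $\cH[A\cup W]$ can be drawn from a $(C_1/n^{k-1})$-spread distribution ``depending only on $W$''. With $|A\cup W|=\Theta(n)$, a spread perfect matching on $A\cup W$ would require roughly $\exp\bigl((1-1/k)|A|\log n\bigr)$ completions, but the standard absorber mechanism (flip each constant-size gadget or not) supplies only $2^{O(n)}$. Once $M^{\mathrm{abs}}$ and $W$ are revealed the unflipped edges are deterministic, so the \emph{conditional} completion cannot be $(C/n^{k-1})$-spread; and you cannot simply re-use the spreadness of $M^{\mathrm{abs}}$ here, because the edges of $M^{\mathrm{abs}}$ that survive unflipped are essentially the same ones already accounted for, while your final spread calculation treats the three phases as contributing independent factors. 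The paper's iterative vortex dissolves this circularity by shrinking the absorption arena to $O(n^{1/k})$ vertices, where any deterministic completion still costs at most $O(n^{-(k-1)})$ per edge in the final measure.
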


For a $k$-uniform $n$-vertex hypergraph $\cH$ with $\delta_{k-1}(\cH) \geq \delta n$ for some $\delta > 1/2$, there are some earlier results~\cite{FHM2021, FKS2016, GGJKO2021} on counting the number of perfect matchings in $\cH$.
Recently, Glock, Gould, Joos, K\"uhn, and Osthus~\cite{GGJKO2021} showed that $\cH$ has at least $\exp((1-1/k) n \log n - \Theta(n))$ perfect matchings, which is best possible up to an $\exp(\Theta(n))$ factor (this is also implicit in~\cite{FKS2016}), since this is also an upper bound for the number of perfect matchings in an $n$-vertex $k$-uniform complete hypergraph.
Very recently, Ferber, Hardiman, and Mond~\cite{FHM2021} sharpened the bound further by showing that $\cH$ has at least $(1-o(1))^n |\cM(\cK_n^k)| \delta^{n/k}$ perfect matchings, where $|\cM(\cK_n^k)|$ is the number of perfect matchings in an $n$-vertex complete $k$-uniform hypergraph.

As a corollary to Theorems~\ref{thm:main_general_fknp} and~\ref{thm:main_rrs_fknp}, we extend the above  results of~\cite{FKS2016, GGJKO2021} to $n$-vertex $k$-uniform hypergraphs with minimum $d$-degree at least $m_d(k,n) + o(n^{k-d})$ or minimum codegree at least $m_{k-1}(k,n)$. Our bounds are best possible up to an $\exp(\Theta(n))$ factor.


\begin{corollary}\label{cor:num_pm}
Let $k \geq 3$ be an integer and $\gamma \in (0,1)$. There exist $c>0$ and $n_0 \in \mathbb{N}$ such that the following holds for all integers $n \geq n_0$.
\begin{enumerate}[label=\upshape{(\roman*)}]
    \item\label{num_pm:general} For $d \in [k-1]$, if $n \equiv s \modu{k}$, then every $k$-uniform $n$-vertex hypergraph $\cH$ with $\delta_{d}(\cH) \geq \left(\overline{\mu_d}^{(s)}(k) + \gamma\right) \binom{n-d}{k-d}$ has at least $\exp((1-1/k) n \log n - cn)$ optimal matchings.
        
    \item If $k \mid n$, then every $k$-uniform $n$-vertex hypergraph $\cH$ with $\delta_{k-1}(\cH) \geq m_{k-1}(k,n)$, has at least $\exp((1-1/k) n \log n - cn)$ perfect matchings.
\end{enumerate}
\end{corollary}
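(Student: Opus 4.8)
The plan is to deduce Corollary~\ref{cor:num_pm} directly from the spreadness results, Theorems~\ref{thm:main_general_fknp} and~\ref{thm:main_rrs_fknp}, by converting a spread measure into a lower bound on the number of matchings via a standard averaging argument. The key observation is elementary: if $\nu$ is a $q$-spread probability measure on the set of optimal matchings of a $k$-uniform $n$-vertex hypergraph $\cH$ and $t \coloneqq \lfloor n/k\rfloor$, then every individual optimal matching $M = \{e_1,\dots,e_t\}$ satisfies $\nu(\{M\}) \le q^{t}$. Indeed, writing $M'$ for a matching drawn according to $\nu$, the event that $e_1,\dots,e_t \in M'$ is exactly the event that $M' = M$, since an optimal matching has precisely $t$ edges; hence $\nu(\{M\}) = \pr{e_1,\dots,e_t \in M'} \le q^{t}$ by $q$-spreadness. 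Letting $N$ denote the number of optimal matchings of $\cH$ and summing this bound over all of them yields $1 \le N q^{t}$, that is, $N \ge q^{-t}$.

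For part~(i) I would apply Theorem~\ref{thm:main_general_fknp} with the given $\gamma$ to obtain a $(C/n^{k-1})$-spread probability measure on the optimal matchings of $\cH$, and then substitute $q = C/n^{k-1}$ and $t = \lfloor n/k\rfloor$ into $N \ge q^{-t}$ to get
\[
  N \;\ge\; \left(\frac{n^{k-1}}{C}\right)^{t} \;=\; \exp\bigl((k-1)\,t\log n - t\log C\bigr).
\]
Since $t \ge n/k - 1$, we have $(k-1)t\log n \ge (1-1/k)n\log n - (k-1)\log n$, and $-t\log C \ge -(n/k)|\log C|$, so the right-hand side is at least $\exp\bigl((1-1/k)n\log n - cn\bigr)$ for a suitable constant $c = c(k,\gamma) > 0$, once $n_0$ is large enough that $(k-1)\log n \le n$ for all $n \ge n_0$. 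Part~(ii) is proved identically, using Theorem~\ref{thm:main_rrs_fknp} in place of Theorem~\ref{thm:main_general_fknp} and noting that now $t = n/k$ exactly, so the computation is even cleaner.

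There is no real obstacle: the entire difficulty of Corollary~\ref{cor:num_pm} is already encapsulated in Theorems~\ref{thm:main_general_fknp} and~\ref{thm:main_rrs_fknp}, and the deduction above is routine. The only points requiring minor care are that the mere existence of a probability measure on the set of optimal matchings already forces $N \ge 1$, so the inequality $N \ge q^{-t}$ is meaningful, and that one must bound the lower-order term $-(k-1)\log n$ and the term $-t\log C$ from below by $-cn$, which is harmless for large $n$. It is also worth recording that this bound is best possible up to the $\exp(\Theta(n))$ factor, since the complete $k$-uniform hypergraph on $n$ vertices has only $\exp\bigl((1-1/k)n\log n - \Theta(n)\bigr)$ optimal matchings.
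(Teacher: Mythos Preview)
Your proposal is correct and takes essentially the same approach as the paper's own proof: both use the spread measures from Theorems~\ref{thm:main_general_fknp} and~\ref{thm:main_rrs_fknp}, observe that each individual optimal matching $M$ has $\nu(\{M\}) \le q^{\lfloor n/k\rfloor}$ by spreadness, sum over all optimal matchings to get $|\cM(\cH)| \ge q^{-\lfloor n/k\rfloor}$, and then substitute $q = C/n^{k-1}$ to obtain the stated bound. The arithmetic and the handling of the lower-order terms are the same in both arguments.
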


    
\begin{proof}
Let $\cM(\cH)$ be the set of optimal matchings of $\cH$, and let ${\bf M}$ be a $(C/n^{k-1})$-spread random optimal matching of $\cH$ for some $C > 0$ given by Theorems~\ref{thm:main_general_fknp} or~\ref{thm:main_rrs_fknp}, where $C$ is a function of $k$ and $\gamma$ for (i), and a function of $k$ for (ii).

For each fixed matching $M \in \cM(\cH)$ which consists of $\lfloor n/k \rfloor$ edges $e_1 , \dots, e_{\lfloor n/k \rfloor}$, we have that $\pr{e_1 , \dots , e_{\lfloor n/k \rfloor} \in {\bf M}} = \pr{{\bf M} = M} \leq (C/n^{k-1})^{\lfloor n/k \rfloor}$ since ${\bf M}$ is $(C/n^{k-1})$-spread. Thus,
\begin{align*}
    1 = \sum_{M \in \cM(\cH)}\pr{{\bf M} = M} \leq |\cM(\cH)| (C/n^{k-1})^{\lfloor n/k \rfloor},   
\end{align*}
which implies $|\cM(\cH)| \geq (n^{k-1} / C)^{\lfloor n/k \rfloor} = \exp((1-1/k) n \log n - (\log C/k)n \pm k \log n)$, as desired.
\end{proof}

\begin{remark}
In independent work, Pham, Sah, Sawhney, and Simkin~\cite{PSSS22} also proved Theorem~\ref{thm:main_general_fknp} and its corollaries \cref{cor:num_pm}\ref{num_pm:general} and Theorem~\ref{thm:main_general}. In addition, they proved a robust version of the Hajnal--Szemer\'{e}di theorem~\cite{HS1970} regarding embedding a $K_r$-factor into an $n$-vertex graph with minimum degree at least $(1-1/r)n$ and a robust version of Koml\'{o}s, S\'{a}rk\"{o}zy, and Szemer\'{e}di's~\cite{KSS1995} proof of Bollob\'as' conjecture that an $n$-vertex graph with minimum degree at least $(1/2 + o(1))n$ contains any bounded-degree spanning tree.  Both of these results are also derived from stronger results concerning spread measures.  
\end{remark}

\subsection{Notation}
For $k \in \mathbb N$, we let $[k] \coloneqq \{1, \dots, k\}$.
For a $k$-uniform hypergraph $\cH$ and an edge $e \in \cH$, we often denote by $V(e)$ the set of vertices incident to $e$.
For any set $S \subseteq V(\mathcal{H})$, we denote by $\cH[S]$ the subgraph of $\mathcal{H}$ induced by $S$. 
For sets $S \subseteq V(\mathcal{H})$ and $\mathcal{S} \subseteq \binom{V(\cH)}{k-\s{S}}$, we denote by $N_\cH(S; \mathcal{S})$ the set of edges $e \in \cH$ such that $e = S \cup S'$ for some $S' \in \mathcal{S}$, and $d_\cH(S ; \mathcal{S}) \coloneqq |N_\cH(S; \mathcal{S})|$. We often omit $\mathcal{S}$ if $\mathcal{S} = \binom{V(\cH)}{k-\s{S}}$ (for example, we write $N_\cH(S)$ and $d_\cH(S)$).
If $\s{S} = k-1$ and $U \subseteq V(\cH)$, we abuse notation and write $d_\cH(S; U)$ for $d_\cH(S;\{\{u\} \colon u \in U\})$. Moreover, for $v \in V(\cH)$, we write $d_\cH(v; \mathcal{S})$ for $d_{\cH}(\{v\}; \mathcal{S})$. 
For disjoint subsets $W_1 , \dots , W_k \subseteq V(\cH)$, we denote by $e_\cH(W_1 , \dots , W_k)$ the number of edges $e \in \mathcal{H}$ with $|e \cap W_i| = 1$ for all $i \in [k]$.
We denote by $\overline{\cH}$ the \emph{complement} of a $k$-uniform hypergraph $\cH$ such that $V(\overline{\cH}) \coloneqq V(\cH)$ and  $\overline{\cH} \coloneqq \binom{V(\cH)}{k} \setminus \cH$.
We take all asymptotic notations $o(\cdot),O(\cdot),\Theta(\cdot),\omega(\cdot),\Omega(\cdot)$ to be as $n \to \infty$, and all of their leading coefficients may depend on parameters other than $n$. 
We say that an event $\mathcal{E}$ holds \emph{asymptotically almost surely \mbox{(a.a.s.)}} if $\pr{\mathcal{E}} = 1 - o(1)$ as $n \rightarrow \infty$. For real numbers $x$, $y$, $\alpha$, and $\beta$ with $\beta \geq 0$, we write $x =  (\alpha \pm \beta)y$ for $(\alpha - \beta)y \leq x \leq (\alpha + \beta)y$.
We sometimes state a result with a hierarchy of constants which are chosen from right to left.
If we state that the result holds whenever $a \ll b_1,\dots,b_t$, then this means that there exists a function $f \colon (0,1)^t \to (0,1)$ such that $f(b_1 , \dots , \widetilde{b_i} , \dots , b_t) \leq f(b_1 , \dots , b_i , \dots , b_t)$ for $0 < \widetilde{b_i} \leq b_i < 1$ for all $i \in [t]$ and the result holds for all real numbers $0 < a , b_1 , \dots , b_t < 1$ with $a \leq f(b_1 , \dots , b_t)$. If a reciprocal $1/m$ appears in such a hierarchy, we implicitly assume that~$m$ is a positive integer.
For a set $U$ and $p \in [0,1]$, a \emph{$p$-random subset} of $U$ is a random subset $U'$ of $U$ that contains each element of $U$ independently with probability $p$.

\subsection{Proof outline}\label{subsec:outline}

Here we briefly sketch the proofs of Theorems~\ref{thm:main_general_fknp} and \ref{thm:main_rrs_fknp}.  One key idea of both proofs is that we can use the weak hypergraph regularity lemma (Theorem~\ref{thm:Weak_HRL}) to find a distribution on \textit{almost} perfect matchings which has good spreadness.  The weak hypergraph regularity lemma gives us a \emph{reduced $k$-uniform hypergraph} $\cR$ such that almost all subsets of $V(\cR)$ of size $d$ have large $d$-degree. Using Lemma~\ref{lem:almostPM_general}, we can find an almost perfect matching $M_{\cR}$ of $\cR$ such that each edge of $M_{\cR}$ corresponds to a vertex-disjoint pseudorandom $k$-partite subhypergraph in which we can easily construct an almost perfect matching with spreadness.

To obtain a distribution on \textit{optimal} matchings, we use an approach inspired by the method of ``iterative absorption" (introduced in \cite{KKO2015, KK2013} and further developed in \cite{BGKLMO2020, BKLO2016, BKLOT2017, GKLMO2019, GKLO16, KKKMO22, KSSS22, SSS22}).  Our iterative-absorption approach, combined with the regularity lemma, allows us to `bootstrap' results on the existence of optimal matchings to construct well-spread distributions on optimal matchings.  In this approach, we choose a random partition $(U_1 , \dots , U_\ell)$ of the vertex-set of the $k$-uniform hypergraph $\cH$, which we call a \emph{vertex vortex} (Definition~\ref{def:vortex}), which \aas satisfies $|U_{i+1}| \sim |U_i|/2$, $|U_\ell| = O(n^{1/k})$, and some additional conditions on degrees of the vertices in $\cH[U_i]$ and $\cH[U_i , U_{i+1}]$. Using the regularity-lemma approach described above, we can find a well-spread distribution on matchings in $\cH[U_i]$ which cover almost all vertices.  Then we cover the leftover uncovered vertices in $U_i$ using edges which intersect $U_{i+1}$ in $k - 1$ vertices.  
By the degree conditions of the vertex vortex, there are many choices of such edges, so a random greedy approach yields a distribution with good enough spreadness.  After iterating this procedure $\ell - 1$ times, it suffices to find an optimal matching in the final subset $U_\ell$ (with a small subset of vertices deleted), in a deterministic way, since $|U_\ell| = O(n^{1/k})$ and
$\pr{e_1 , \dots , e_t \subseteq U_\ell} = (|U_\ell|/n)^{kt} = (O(1)/n^{k-1})^t$ for any $t$ disjoint edges $e_1 , \dots , e_t \in \cH$.

In the setting of Theorem~\ref{thm:main_general_fknp}, it is straightforward to find an optimal matching in the final step; the hypergraph induced on the remaining vertices will still be sufficiently dense.  For Theorem~\ref{thm:main_rrs_fknp}, we show that the result of R\"{o}dl, Ruci\'nski, and Szemer\'edi~\cite{Rodl2009} holds `robustly'.  Roughly speaking, it holds in the hypergraph induced by a random set of vertices, even after deleting a small proportion of the vertices.   
For this we must consider two cases according to whether the original hypergraph is close to being a `critical hypergraph' (see Definition~\ref{def:critical}) which has minimum codegree $m_{k-1}(k,n) - 1$ and no perfect matching.
If the original hypergraph $\cH$ is close to being a critical hypergraph, then we may choose an `atypical edge' among $\Omega(n^{k-1})$ candidates (Lemma~\ref{lem:manyatypical}) and delete its vertices in advance. This ensures that the subhypergraph of $\cH$ induced by the remaining vertices in $U_\ell$ will meet certain `divisibility' conditions and allow us to apply some technical results proved by R\"{o}dl, Ruci\'nski, and Szemer\'edi to find a perfect matching covering the remaining vertices of $U_\ell$.
Moreover, since there are $\Omega(n^{k-1})$ candidates for the atypical edge, we have the desired spreadness property for this edge.
In the second case, the original hypergraph $\cH$ is not close to being a critical hypergraph.  In this case, we prove that there are still many `absorbers' inside $U_\ell$ (Corollary~\ref{cor:M_count_in_U}), which we can use to build an `absorbing matching'.  As long as the vertices of the absorbing matching are not among those removed from $U_\ell$, we can transform an almost perfect matching (which covers most of the remaining vertices of $U_\ell$) into a perfect matching (i.e., one which covers all remaining vertices of $U_\ell$).

\section{Tools}

\subsection{Concentration inequalities}
We will use the following well-known version of the \emph{Chernoff bound}.

\begin{lemma}[Chernoff bound]\label{lem:chernoff}
If $X$ is the sum of mutually independent Bernoulli random variables, then for all $\delta \in [0,1]$,
\begin{equation*}
    \Prob{|X - \mathbb{E}[X]| \geq \delta \mathbb{E}[X]} \leq 2e^{-\delta^2 \mathbb{E}[X] / 3}.
\end{equation*}
\end{lemma}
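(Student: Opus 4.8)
The plan is to prove the upper and lower tail bounds separately by the standard exponential-moment (Bernstein--Chernoff) method, then combine them by a union bound. Write $X = \sum_{i=1}^{N} X_i$ where the $X_i$ are independent with $\Prob{X_i = 1} = p_i$, and set $\mu \coloneqq \mathbb{E}[X] = \sum_{i=1}^N p_i$. If $\mu = 0$ then $X = 0$ almost surely and there is nothing to prove, so assume $\mu > 0$. The basic estimate is that for every real $t$ one has $\mathbb{E}[e^{tX_i}] = 1 + p_i(e^t-1) \le e^{p_i(e^t-1)}$ (using $1 + x \le e^x$), and hence by independence $\mathbb{E}[e^{tX}] = \prod_i \mathbb{E}[e^{tX_i}] \le e^{\mu(e^t-1)}$.

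For the upper tail, for $t > 0$ Markov's inequality applied to $e^{tX}$ gives $\Prob{X \ge (1+\delta)\mu} \le e^{-t(1+\delta)\mu}\,\mathbb{E}[e^{tX}] \le \exp\!\big(\mu(e^t - 1 - t(1+\delta))\big)$; optimizing by setting $t = \ln(1+\delta)$ yields $\Prob{X \ge (1+\delta)\mu} \le \big(e^\delta (1+\delta)^{-(1+\delta)}\big)^{\mu}$. It then remains to check the scalar inequality $\delta - (1+\delta)\ln(1+\delta) \le -\delta^2/3$ for $\delta \in [0,1]$. I would verify this by setting $f(\delta) \coloneqq (1+\delta)\ln(1+\delta) - \delta - \delta^2/3$, noting $f(0) = f'(0) = 0$ with $f'(\delta) = \ln(1+\delta) - 2\delta/3$, and observing that $f'' $ is positive on $[0,1/2)$ and negative on $(1/2,1]$ while $f'(0) = 0$ and $f'(1) = \ln 2 - 2/3 > 0$, so $f'$ stays nonnegative throughout $[0,1]$, whence $f \ge 0$ on $[0,1]$. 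This gives $\Prob{X \ge (1+\delta)\mu} \le e^{-\delta^2\mu/3}$.

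For the lower tail I would run the symmetric argument with $e^{-tX}$: for $t>0$, $\Prob{X \le (1-\delta)\mu} \le e^{t(1-\delta)\mu}\,\mathbb{E}[e^{-tX}] \le \exp\!\big(\mu(e^{-t} - 1 + t(1-\delta))\big)$, and optimizing at $t = -\ln(1-\delta)$ (valid for $\delta < 1$; the case $\delta = 1$ is handled directly since $\Prob{X = 0} \le e^{-\mu} \le e^{-\mu/3}$) gives $\Prob{X \le (1-\delta)\mu} \le \big(e^{-\delta}(1-\delta)^{-(1-\delta)}\big)^\mu$. Here one can use the cleaner bound $-\delta - (1-\delta)\ln(1-\delta) \le -\delta^2/2 \le -\delta^2/3$: setting $g(\delta) \coloneqq -\delta - (1-\delta)\ln(1-\delta) + \delta^2/2$ one has $g(0) = g'(0) = 0$ and $g''(\delta) = -\delta/(1-\delta) \le 0$ on $[0,1)$, so $g \le 0$ there. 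Finally, the union bound $\Prob{|X-\mathbb{E}[X]| \ge \delta\mathbb{E}[X]} \le \Prob{X \ge (1+\delta)\mu} + \Prob{X \le (1-\delta)\mu} \le 2e^{-\delta^2\mu/3}$ completes the proof.

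The only genuinely delicate point is the pair of elementary inequalities bounding $e^\delta(1+\delta)^{-(1+\delta)}$ and $e^{-\delta}(1-\delta)^{-(1-\delta)}$ by $e^{-\delta^2/3}$. In particular the upper-tail inequality does require the hypothesis $\delta \le 1$ (it fails for large $\delta$), and its proof needs the two-derivative calculus argument above rather than a one-line convexity estimate, precisely because $f''$ changes sign on $[0,1]$. Everything else is the routine moment-generating-function computation.
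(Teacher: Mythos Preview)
Your proof is correct and is the standard exponential-moment (Cram\'er--Chernoff) argument. The paper does not give its own proof of this lemma; it simply states it as a well-known tool and uses it as a black box. There is nothing to compare against, and your write-up would serve perfectly well as a self-contained verification.
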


\begin{definition}[Typical subset]\label{def:typical}
Let $V$ be a finite set, and let $\cF \subseteq 2^V$ be a collection of subsets of $V$. For $p,\eps \in [0,1]$, a subset $U \subseteq V$ is called \emph{$(p,\eps,\cF)$-typical} if the number of elements in $\cF$ contained in $U$ is $(1 \pm \eps) \sum_{S \in \cF}p^{|S|}$.
\end{definition}

We will use the following probabilistic lemma which follows from the Kim--Vu polynomial concentration theorem~\cite{Kim2000}.
For the proof, see \APPENDIX{\cref{{app:lemma-proofs}}}\NOTAPPENDIX{the appendix of the arxiv version of this paper}.

\begin{lemma} \label{lemma:prob_lemma}
    Let $1/n \ll 1/s, \beta, \eps < 1$ and $k \geq 2$. Let $V$ be a set of size $n$. Let $p = p(n) \in [0,1]$ such that $np \geq \eps n^\beta$. Let $\mathcal{F} \subseteq \binom{V}{s}$, and let $U$ be a $p$-random subset of~$V$. Then the following holds. 
    \begin{enumerate}[label = {\upshape{(\roman*})}, leftmargin= \widthof{a00000}]
        \item \label{typical_i} If $\s{\mathcal{F}} \geq \eps n^s (np)^{-1/2}$, then with probability at least $1 -  \exp(-n^{\beta/(10s)})$, 
        the set $U$ is $(p,\eps,\cF)$-typical.
        
        \item \label{typical_ii} If $\s{\mathcal{F}} \leq \eps n^s$, then with probability at least $1 -  \exp(-n^{\beta/(10s)})$, 
        the number of elements of $\mathcal{F}$ contained in $U$ is at most $2 \eps (np)^s$.
    \end{enumerate}
    
\end{lemma}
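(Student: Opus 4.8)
The plan is to deduce both statements from the Kim--Vu polynomial concentration inequality~\cite{Kim2000}, applied to the random variable counting the members of $\mathcal F$ inside $U$. Write $V = \{v_1, \dots, v_n\}$ and, for $i \in [n]$, set $\xi_i \coloneqq \mathbf{1}[v_i \in U]$, so that $\xi_1, \dots, \xi_n$ are independent Bernoulli random variables of mean $p$, and let
\[
    Y \coloneqq \sum_{S \in \mathcal F} \prod_{v_i \in S} \xi_i ,
\]
a multilinear polynomial of degree $s$ with $0/1$ coefficients whose value is precisely the number of elements of $\mathcal F$ contained in $U$. Then $\mathbb E[Y] = \sum_{S \in \mathcal F} p^{|S|} = |\mathcal F|\, p^s$, and for a nonempty $A \subseteq [n]$ with $|A| = j \le s$ the corresponding partial derivative satisfies $\mathbb E[\partial_A Y] = |\{S \in \mathcal F : A \subseteq S\}| \, p^{s-j} \le \binom{n-j}{s-j}p^{s-j} \le (np)^{s-j}$. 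Since $np \ge \eps n^\beta \ge 1$ for $n$ large, this is at most $(np)^{s-1}$ for every $j \ge 1$, so in Kim--Vu notation $\mathbb E''(Y) \le (np)^{s-1}$ and $\mathbb E'(Y) = \max(\mathbb E[Y], \mathbb E''(Y))$; hence there are constants $a_s, b_s > 0$ with $\Prob{|Y - \mathbb E[Y]| > a_s (\mathbb E'(Y)\mathbb E''(Y))^{1/2}\lambda^s} \le b_s\, n^{s-1} e^{-\lambda}$ for all $\lambda \ge 1$.

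For part~\ref{typical_i}, the assumption $|\mathcal F| \ge \eps n^s (np)^{-1/2}$ yields $\mathbb E[Y] = |\mathcal F|\, p^s \ge \eps(np)^{s-1/2}$, which exceeds $(np)^{s-1}$ for $n$ large, so $\mathbb E'(Y) = \mathbb E[Y]$; also $\mathbb E[Y] \le (np)^s$ since $|\mathcal F| \le n^s$. Thus the Kim--Vu error term is at most $a_s(\mathbb E[Y]\,(np)^{s-1})^{1/2}\lambda^s$, and a short computation using $\mathbb E[Y] \ge \eps(np)^{s-1/2}$ shows this is at most $\eps\,\mathbb E[Y]$ provided $a_s\lambda^s \le \eps^{3/2}(np)^{1/4}$; since $(np)^{1/4} \ge \eps^{1/4}n^{\beta/4}$, we may take $\lambda = c_1\, n^{\beta/(4s)}$ for a suitable $c_1 = c_1(s,\eps) > 0$, and then $U$ fails to be $(p,\eps,\mathcal F)$-typical with probability at most $b_s\, n^{s-1} e^{-c_1 n^{\beta/(4s)}}$. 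As $n^{\beta/(4s)} \gg n^{\beta/(10s)} + \log n$ once $n$ is large in terms of $s, \beta, \eps$, this is below $\exp(-n^{\beta/(10s)})$, consistent with the hierarchy $1/n \ll 1/s, \beta, \eps$.

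For part~\ref{typical_ii}, the assumption $|\mathcal F| \le \eps n^s$ gives $\mathbb E[Y] \le \eps(np)^s$ and, for $n$ large (as $(np)^{s-1} \le \eps(np)^s$), also $\mathbb E'(Y) \le \eps(np)^s$. Hence the Kim--Vu error term is at most $a_s(\eps(np)^s (np)^{s-1})^{1/2}\lambda^s = a_s\eps^{1/2}(np)^{s-1/2}\lambda^s$, which is at most $\eps(np)^s$ once $a_s\lambda^s \le \eps^{1/2}(np)^{1/2}$; taking $\lambda = c_2\, n^{\beta/(2s)}$ for suitable $c_2 = c_2(s,\eps) > 0$ and applying Kim--Vu gives $Y \le \mathbb E[Y] + \eps(np)^s \le 2\eps(np)^s$ with failure probability at most $b_s\, n^{s-1} e^{-c_2 n^{\beta/(2s)}} \le \exp(-n^{\beta/(10s)})$. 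The argument is bookkeeping once Kim--Vu is in hand; the only step requiring care is checking, in each part, that the error term $(\mathbb E'(Y)\mathbb E''(Y))^{1/2}\lambda^s$ is dominated by the target deviation — by $\eps\,\mathbb E[Y]$ in part~\ref{typical_i}, by $\eps(np)^s$ in part~\ref{typical_ii} — for some $\lambda$ that is a fixed positive power of $n$ (so in particular $\lambda \ge 1$ for $n$ large), which is exactly where the lower bound $|\mathcal F| \ge \eps n^s(np)^{-1/2}$ of part~\ref{typical_i} and the density bound $np \ge \eps n^\beta$ enter. The hypothesis $k \ge 2$ plays no role.
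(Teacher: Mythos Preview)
Your argument is correct and, for part~\ref{typical_i}, essentially identical to the paper's: both set up the count $Y=\sum_{S\in\mathcal F}\prod_{v\in S}\mathbf 1[v\in U]$, bound the maximal truncated expectation by $(np)^{s-1}$, and feed $\mathbb E[Y]\ge \eps(np)^{s-1/2}$ into Kim--Vu with $\lambda$ a small fixed power of $n$.

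The only noteworthy difference is in part~\ref{typical_ii}. You run Kim--Vu again directly on the small family $\mathcal F$, bounding $\mathbb E'(Y)\le \eps(np)^s$ and $\mathbb E''(Y)\le (np)^{s-1}$ and showing the deviation $\eps(np)^s$ is attainable with $\lambda$ of order $n^{\beta/(2s)}$. The paper instead deduces \ref{typical_ii} from \ref{typical_i}: it enlarges $\mathcal F$ to some $\mathcal F'\supseteq\mathcal F$ with $\eps n^s(np)^{-1/2}\le |\mathcal F'|\le \eps n^s$, applies \ref{typical_i} to $\mathcal F'$, and reads off $|\{S\in\mathcal F:S\subseteq U\}|\le (1+\eps)p^s|\mathcal F'|\le 2\eps(np)^s$. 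Your route avoids the artificial enlargement and gives a slightly better exponent in $\lambda$; the paper's route is shorter once \ref{typical_i} is in hand. Both are perfectly fine.
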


\subsection{Weak hypergraph regularity}

We now introduce the weak hypergraph regularity lemma, which states that any $k$-uniform hypergraph has a vertex partition into clusters $\{ V_i \}_{0 \leq i \leq t}$ so that almost all $k$-tuples of clusters induce $\eps$-regular subhypergraphs.
Since the notion of $\eps$-regularity is `weak', its proof is very similar to the graph version. 
Readers should not confuse the weak hypergraph regularity lemma with the Frieze--Kannan weak regularity lemma~\cite{FK1999}.

\begin{definition}[$\varepsilon$-regular $k$-tuple]
Let $\eps > 0$ and let $\cH$ be a $k$-uniform hypergraph. 
We say that a $k$-tuple $(V_1, \dots, V_k)$ of mutually disjoint subsets of $V(\cH)$ is \emph{$(d,\eps)$-regular} if
$e_\cH(W_1 , \dots , W_k) = (d \pm \eps)|W_1|\cdots|W_k|$
for every $W_1 \subseteq V_1$, \dots, and $W_k \subseteq V_k$ with $|W_1|\cdots|W_k| \geq \eps |V_1|\cdots|V_k|$. 
Moreover, we say that $(V_1, \dots, V_k)$ is \emph{$\eps$-regular} if it is $(d, \eps)$-regular for some $d > 0$.
\end{definition}

\begin{definition}[$\varepsilon$-regular partition]
Let $\eps > 0$, and let $\cH$ be a $k$-uniform hypergraph. A partition $(V_0 , V_1 , \dots , V_t)$ of $V(\cH)$ is called an \emph{$\varepsilon$-regular partition} if 
\begin{itemize}
    \item $|V_0| \leq \eps n$ and $|V_1| = \cdots = |V_t|$.
    \item For all but at most $\eps \binom{t}{k}$ $k$-sets $\{i_1, \dots, i_k \} \in \binom{[t]}{k}$, the tuple $(V_{i_1}, \dots, V_{i_k})$ is $\eps$-regular.
\end{itemize}
\end{definition}

\begin{definition}[Reduced hypergraph]
Let $\cH$ be a $k$-uniform hypergraph, and let $(V_0 , V_1 , \dots , V_t)$ be an $\eps$-regular partition of $V(\cH)$. 
The \emph{$(\gamma, \eps)$-reduced hypergraph} $\cR$ with respect to $(V_0 , V_1 , \dots , V_t)$ is the $t$-vertex $k$-uniform hypergraph with $V(\cR) = [t]$ and $\{i_1 , \dots , i_k \} \in \cR$ if and only if $(V_{i_1} , \dots , V_{i_k})$ is $\eps$-regular and $e_\cH(V_{i_1} , \dots , V_{i_k}) \geq \gamma|V_{i_1}|\cdots|V_{i_k}|$.
\end{definition}

\begin{theorem}[Weak hypergraph regularity lemma~\cite{Ch91, Fr92, St90}]\label{thm:Weak_HRL}
Let $1/n , 1/t_1 \ll \eps , 1/t_0 < 1$. For every $n$-vertex $k$-uniform hypergraph $\cH$, there exists an $\eps$-regular partition $(V_0 , \dots , V_t)$ of $V(\cH)$ such that $t_0 \leq t \leq t_1$.
\end{theorem}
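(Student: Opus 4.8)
The plan is to run the standard energy-increment (index) argument that proves Szemer\'edi's regularity lemma, adapted to the weak $k$-uniform setting; since the notion of regularity here only constrains densities of $k$-tuples of sub-clusters and imposes no finer quasirandomness, this is a routine generalization of the graph case. For a partition $\mathcal{Q} = (W_1, \dots, W_m)$ of $V(\cH)$ into nonempty parts, define its \emph{index}
\[
  \mathrm{ind}(\mathcal{Q}) \coloneqq \sum_{(j_1, \dots, j_k)} \frac{|W_{j_1}| \cdots |W_{j_k}|}{n^k}\, d_\cH(W_{j_1}, \dots, W_{j_k})^2,
\]
where the sum ranges over ordered $k$-tuples of pairwise distinct indices and $d_\cH(W_{j_1}, \dots, W_{j_k}) \coloneqq e_\cH(W_{j_1}, \dots, W_{j_k}) / (|W_{j_1}| \cdots |W_{j_k}|)$ (the negligible contribution of $k$-tuples with repeated parts can be handled by any fixed convention and does not affect the argument). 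Then $0 \le \mathrm{ind}(\mathcal{Q}) \le 1$ always, and the defect form of the Cauchy--Schwarz inequality, applied $k$-linearly, shows that passing to a refinement of $\mathcal{Q}$ never decreases its index.

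First I would prove the key refinement lemma: if $(V_0, V_1, \dots, V_t)$ is a partition with $|V_1| = \dots = |V_t|$ and $|V_0| \le \eps n$ that is \emph{not} $\eps$-regular --- so more than $\eps \binom{t}{k}$ of the $k$-sets $\{i_1, \dots, i_k\}$ have $(V_{i_1}, \dots, V_{i_k})$ irregular --- then there is a refinement $(V_0', V_1', \dots, V_{t'}')$ of it with $t' \le t \cdot 2^{\binom{t}{k-1}}$ whose index exceeds $\mathrm{ind}(V_0, \dots, V_t)$ by at least some $c = c(\eps, k) > 0$. To see this, for each irregular $k$-tuple fix a witnessing tuple $(W_{i_1}, \dots, W_{i_k})$ of sub-clusters with $|W_{i_1}| \cdots |W_{i_k}| \ge \eps |V_{i_1}| \cdots |V_{i_k}|$ and $|d_\cH(W_{i_1}, \dots, W_{i_k}) - d_\cH(V_{i_1}, \dots, V_{i_k})| > \eps$; then refine each cluster $V_i$ simultaneously according to all the witness sets $W_i$ in which it participates, and apply the defect Cauchy--Schwarz inequality separately to the $2^k$-atom partition of each irregular $k$-box. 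A density deviation exceeding $\eps$ on a sub-box of relative volume at least $\eps$ forces an index gain of at least $\eps^3 \cdot |V_{i_1}| \cdots |V_{i_k}| / n^k \ge \eps^3 (2t)^{-k}$ from that tuple; summing over the more than $\eps \binom{t}{k}$ irregular tuples gives a total gain bounded below by $c(\eps, k) \gtrsim \eps^4 / k!$, independent of $t$.

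Then I would iterate: start from any partition into $t_0$ equal parts with $V_0 = \varnothing$, and repeatedly apply the refinement lemma. Since the index lies in $[0,1]$ and increases by at least $c(\eps, k)$ at each step, the process halts after at most $1/c(\eps, k)$ steps at an $\eps$-regular partition, and the number of parts is at most a tower-type function of $1/\eps$, $t_0$, and $k$; the hypothesis $1/t_1 \ll \eps, 1/t_0$ guarantees this is at most $t_1$, while $t \ge t_0$ is automatic because refining only increases the number of parts. After each refinement one re-balances the parts to a common size by chopping each atom into equal chunks and moving the at most $t'$ leftover vertices into $V_0$; over the $O_\eps(1)$ rounds this adds only $O_\eps(1) = o(n)$ vertices to $V_0$, so $|V_0| \le \eps n$ is maintained for $n$ large (which holds since $1/n \ll \eps, 1/t_0$). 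The only point requiring care beyond the graph case --- and the one I would flag as the main obstacle --- is this bookkeeping: a cluster lies in up to $\binom{t-1}{k-1}$ irregular $k$-tuples rather than in $t-1$ irregular pairs, so a single refinement step blows the number of parts up by a factor $2^{\binom{t}{k-1}}$ instead of $2^{t}$, which only makes the final tower bound $t_1$ taller and leaves the $\eps$-dependent, $t$-independent index gain intact; everything else transcribes verbatim from the proof of the graph regularity lemma.
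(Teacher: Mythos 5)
The paper does not actually prove Theorem~\ref{thm:Weak_HRL}: it is quoted from the literature (\cite{Ch91, Fr92, St90}), with only the remark that, because the regularity notion is weak, the proof is essentially the same as for graphs. Your energy-increment sketch is precisely that standard argument, and it is correct in all essentials: the mean-square density index over $k$-tuples of parts, monotonicity under refinement via defect Cauchy--Schwarz, a $t$-independent gain of order $\eps^3\cdot\eps\binom{t}{k}\cdot(2t)^{-k}\gtrsim \eps^4/(k!\,2^k)$ per failed round (here the single condition $|W_{i_1}|\cdots|W_{i_k}|\ge\eps|V_{i_1}|\cdots|V_{i_k}|$ indeed gives $\eps^3$ rather than the graph case's $\eps^4$), and termination after $O_{\eps,k}(1)$ rounds with a tower-type bound absorbed into $t_1$ via $1/t_1\ll\eps,1/t_0$. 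The one place your bookkeeping is stated incorrectly is the re-balancing step: chopping every atom into chunks of a common size $m$ leaves up to $m-1$ stragglers \emph{per atom}, so the leftover is at most (number of atoms)$\times(m-1)$, not ``at most $t'$ vertices''; the standard fix is to take the new common part size much smaller than $n$ divided by the number of atoms (e.g.\ so that the leftover per round is at most $\eps n$ divided by twice the number of rounds), which also keeps the index loss incurred by exiling those vertices to $V_0$ (or one treats them as singleton classes) below half the guaranteed gain. These are routine repairs and do not affect the validity of the approach, which coincides with the proofs in the cited sources.
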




The next lemma can be proved with a straightforward adaptation of the proof of~\cite[Proposition 16]{HS10}, so we defer the proof to \APPENDIX{\cref{app:lemma-proofs}}\NOTAPPENDIX{the appendix of the arxiv version of this paper}.


\begin{lemma}\label{lem:reduced_degree}
Let $1/n \ll \eta \ll 1/t \ll \eps \ll \gamma < c, 1/k \leq 1$ with $k \geq 3$ and $d \in [k-1]$.
Let $\cH$ be a $k$-uniform $n$-vertex hypergraph which satisfies the following.
\begin{itemize}
    \item All but at most $\eta n^{d}$ $d$-sets $S \in \binom{V(\cH)}{d}$ have $d$-degree at least $c\binom{n-d}{k-d}$.
    \item $\cH$ admits an $\eps$-regular partition $(V_0 , \dots , V_t)$.
\end{itemize}

Let $\cR$ be the $(\gamma/3,\eps)$-reduced hypergraph with respect to $(V_1 , \dots , V_t)$. Then all but at most $\eps^{1/2} \binom{t}{d}$ many $d$-sets $S \in \binom{[t]}{d}$ have $d$-degree at least $(c - \gamma) \binom{t-d}{k-d}$ in $\cR$.
\end{lemma}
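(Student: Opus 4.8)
The plan is to transfer the approximate minimum $d$-degree condition from $\cH$ to its reduced hypergraph $\cR$ by a double-counting argument between the vertex level and the cluster level, after discarding two small families of ``bad'' $d$-sets of cluster indices. Write $m \coloneqq |V_1| = \dots = |V_t|$; since $|V_0| \le \eps n$ we have $(1-\eps)n/t \le m \le n/t$. For $S = \{i_1,\dots,i_d\} \in \binom{[t]}{d}$ call a transversal $d$-set $\{v_1,\dots,v_d\}$ with $v_j \in V_{i_j}$ \emph{good} if $d_\cH(\{v_1,\dots,v_d\}) \ge c\binom{n-d}{k-d}$. At most $\eta n^d$ $d$-subsets of $V(\cH)$ fail this, and each $d$-subset of $V(\cH)$ is a transversal of at most one $S$; hence, calling $S$ \emph{degree-bad} if fewer than $(1-\gamma/100)m^d$ of its $m^d$ transversals are good, the number of degree-bad $S$ is at most $\eta n^d/((\gamma/100)m^d) \le \eps^{1/2}\binom{t}{d}/3$, where the last inequality uses $\eta \ll 1/t \ll \eps,\gamma$. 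Call $S$ \emph{regularity-bad} if it lies in more than $(\gamma/10)\binom{t-d}{k-d}$ non-$\eps$-regular $k$-tuples $(V_{i_1},\dots,V_{i_d},V_{j_1},\dots,V_{j_{k-d}})$; since there are at most $\eps\binom{t}{k}$ non-$\eps$-regular $k$-tuples in total, each containing $\binom{k}{d}$ $d$-sets of indices, while $\binom{k}{d}\binom{t}{k} = \binom{t}{d}\binom{t-d}{k-d}$, the number of regularity-bad $S$ is at most $\tfrac{10\eps}{\gamma}\binom{t}{d} \le \eps^{1/2}\binom{t}{d}/3$ (using $\eps \ll \gamma$). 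I will show that every $S$ which is neither degree-bad nor regularity-bad satisfies $d_\cR(S) \ge (c-\gamma)\binom{t-d}{k-d}$; combined with the two bounds above this proves the lemma.

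So fix such an $S = \{i_1,\dots,i_d\}$. The key local estimate is: for every good transversal $\{v_1,\dots,v_d\}$, the number of edges $e \in \cH$ with $\{v_1,\dots,v_d\} \subseteq e$ whose remaining $k-d$ vertices lie in $k-d$ pairwise distinct clusters, none of which is among $V_{i_1},\dots,V_{i_d}$, is at least $(c-\gamma/4)\binom{n-d}{k-d}$. Indeed, of the at least $c\binom{n-d}{k-d}$ edges through $\{v_1,\dots,v_d\}$, the number having one of the remaining vertices in $V_0$, or two remaining vertices in a common cluster, or one remaining vertex in some $V_{i_j}$, is $O(\eps n^{k-d}) + O(n^{k-d}/t) \le (\gamma/4)\binom{n-d}{k-d}$ because $1/t \ll \eps \ll \gamma$. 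Summing over all at least $(1-\gamma/100)m^d$ good transversals, the number $N$ of pairs $(\{v_1,\dots,v_d\}, e)$ with $\{v_1,\dots,v_d\}$ a good transversal and $e$ an edge as above satisfies $N \ge (1-\gamma/100)m^d(c-\gamma/4)\binom{n-d}{k-d}$.

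Now regroup $N$ according to the $(k-d)$-set $J \in \binom{[t]\setminus S}{k-d}$ of clusters containing $e \setminus \{v_1,\dots,v_d\}$; for fixed $J$ the edge $e$ determines the transversal, so the number of pairs with this $J$ is at most $e_\cH(V_{i_1},\dots,V_{i_d},V_{j_1},\dots,V_{j_{k-d}}) \le m^k$. Partition the $\binom{t-d}{k-d}$ choices of $J$ into: those for which the corresponding $k$-tuple of clusters is not $\eps$-regular (at most $(\gamma/10)\binom{t-d}{k-d}$ of them, as $S$ is not regularity-bad), each contributing at most $m^k$ to $N$; those for which it is $\eps$-regular but $e_\cH(V_{i_1},\dots,V_{j_{k-d}}) < (\gamma/3)m^k$, so that $S \cup J \notin \cR$, each contributing less than $(\gamma/3)m^k$; and those with $S \cup J \in \cR$, of which there are $d_\cR(S)$, each contributing at most $m^k$. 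Thus $N \le (\gamma/10 + \gamma/3)\binom{t-d}{k-d}m^k + d_\cR(S)\,m^k$. Since $n \ge tm$ we have $\binom{n-d}{k-d} = (1-o(1))n^{k-d}/(k-d)! \ge (1-o(1))m^{k-d}\binom{t-d}{k-d}$, hence $m^d\binom{n-d}{k-d} \ge (1-o(1))m^k\binom{t-d}{k-d}$; combining this with the two bounds on $N$ and dividing by $m^k$ yields $d_\cR(S) \ge \big((1-\gamma/100)(1-o(1))(c-\gamma/4) - \gamma/10 - \gamma/3\big)\binom{t-d}{k-d} \ge (c-\gamma)\binom{t-d}{k-d}$, using $c \le 1$ and that $n$ is large.

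I expect the main obstacle to be the regularity bookkeeping: the weak regularity lemma controls only the \emph{total} number of non-$\eps$-regular $k$-tuples, so a priori a single $d$-set of cluster indices could be contained in a large fraction of them; this is precisely why the regularity-bad sets have to be peeled off separately, using that it is their total count (not a per-$S$ count) that the regularity lemma bounds. The remainder is a routine averaging between the vertex and cluster levels, where one only has to check that the lower-order error terms — edges meeting $V_0$, and edges with two vertices inside a single cluster — are swallowed by the hierarchy $1/t \ll \eps \ll \gamma$.
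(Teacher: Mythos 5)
Your proposal is correct and takes essentially the same route as the paper's proof: both peel off the $d$-sets of cluster indices lying in many irregular $k$-tuples by averaging against the $\eps\binom{t}{k}$ bound, and then double-count transversal edges through a fixed $d$-set, bounding them from below via the $d$-degree hypothesis (with corrections for $V_0$, repeated clusters, and the clusters of $S$) and from above by splitting the extending $(k-d)$-sets of clusters into irregular, sparse-regular (density below $\gamma/3$), and reduced-graph neighbours. Your separate ``degree-bad'' peeling and the direct (rather than by-contradiction) formulation are only cosmetic differences from the paper's argument, which absorbs the $\eta n^d$ exceptional vertex $d$-sets directly into the lower bound on the edge count.
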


\subsection{Almost perfect matchings}




For $1 \leq d \leq k-1$, recall that $m_d(k,n)$ is the minimum $D$ such that every $n$-vertex $k$-uniform hypergraph with minimum $d$-degree at least $D$ has an optimal matching. 
Let us define
\begin{equation*}
    \underline{\mu_d}(k) \coloneqq \liminf_{n \to \infty} \frac{m_d(k,n)}{\binom{n-d}{k-d}}.
\end{equation*}

Note that $\underline{\mu_d}(k) \leq \overline{\mu_d}^{(s)}(k)$ for $0 \leq s \leq k-1$, and $\underline{\mu_{k-1}}(k) = 1/k$, since $m_{k-1}(k,n) = n/2 - O(k)$ for large $n \in k \mathbb{N}$ and $m_{k-1}(k,n) = \lfloor n/k \rfloor$ for large $n \notin k \mathbb{N}$, as mentioned in Section~\ref{subsec:pm}.
A well-known lower bound on $\underline{\mu_d}(k)$ is $1 - (\frac{k-1}{k})^{k-d}$ (see~\cite[Construction 1.4]{zhao2016}).

Now we prove the following lemma which states that if almost all $d$-tuples satisfy the degree condition for an optimal matching then there exists an almost perfect matching. 
We also remark that there are also similar results on almost perfect matchings~\cite{GH2017, GHZ2019, Keevash2015}.

\begin{lemma}\label{lem:almostPM_general}
    Let $1/n \ll \eps_1 \ll \eps_2 \ll 1/k \leq 1/3$ with $1 \leq d \leq k-1$. Let $\cH$ be an $n$-vertex $k$-uniform hypergraph such that $d_\cH(S) \geq (\underline{\mu_d}(k) + \eps_2) \binom{n-d}{k-d}$ for all but at most $\eps_1 n^d$ many $S \in \binom{V(\cH)}{d}$. Then $\cH$ has a matching which covers all but at most $2 \eps_2 n$ vertices. 
\end{lemma}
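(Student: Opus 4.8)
The plan is to split $V(\cH)$ into many pairwise disjoint ``chunks'' of a large but \emph{constant} size $n'$, to invoke the very definition of $m_d(k,n')$ inside almost every chunk, and to take the union of the resulting near-optimal matchings. The delicate point is to choose $n'$ appropriately and then to run a first-moment argument over a uniformly random partition showing that almost every chunk behaves well.

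First I would fix the constant. Since $m_d(k) = \liminf_{n\to\infty} m_d(k,n)/\binom{n-d}{k-d}$, there are arbitrarily large integers $n'$ with $m_d(k,n') \leq (m_d(k) + \eps_2/10)\binom{n'-d}{k-d}$; choose such an $n' = n'(k,\eps_2)$ which is in addition large enough for the concentration bound below (in particular $k/n' \leq \eps_2/10$). As $1/n \ll \eps_1 \ll \eps_2$, the integer $n$ is enormous compared with $n'$, and we may assume $\eps_1 \leq \eps_2/(n')^{d+1}$. Now take a uniformly random partition of $V(\cH)$ into parts $W_1,\dots,W_T$ of size $n'$ with $T = \lfloor n/n'\rfloor$, together with a leftover part of size $<n'$. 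Call $W_i$ \emph{bad} if $\delta_d(\cH[W_i]) < m_d(k,n')$; otherwise $\cH[W_i]$ has an optimal matching, which covers all but at most $k-1$ of the $n'$ vertices of $W_i$. Taking the union of these matchings over the non-bad parts gives a matching of $\cH$ leaving uncovered fewer than $n' + n'\cdot\#\{\text{bad parts}\} + (k-1)T \leq (\eps_2/100)n + n'\cdot\#\{\text{bad parts}\} + (\eps_2/10)n$ vertices. Hence it suffices to find a partition with $\#\{\text{bad parts}\} \leq (\eps_2/50)T$, since then the uncovered set has size below $2\eps_2 n$; for this I would bound $\mathbb{E}[\#\{\text{bad parts}\}]$.

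If $W_i$ is bad it contains a $d$-set $S$ with $d_{\cH[W_i]}(S) < m_d(k,n')$; call the at most $\eps_1 n^d$ $d$-sets $S$ with $d_\cH(S) < (m_d(k)+\eps_2)\binom{n-d}{k-d}$ \emph{defective}. By exchangeability, for each $d$-set $S$ we have $\Prob{S \text{ lies in one part}} = O((n'/n)^{d-1})$, so the expected number of parts containing a defective $d$-set is $O(\eps_1 n^d(n'/n)^{d-1}) = O(\eps_1(n')^{d-1}n) \leq (\eps_2/100)T$. For a non-defective $S$, conditioning on $S \subseteq W_i$ the set $W_i\setminus S$ is a uniformly random $(n'-d)$-subset of $V(\cH)\setminus S$, so $d_{\cH[W_i]}(S)$ is a sum of indicators, over the $d_\cH(S) = \Theta(n^{k-d})$ neighbours $S'$ of $S$, that $S' \subseteq W_i$; its mean equals $(1+o(1))d_\cH(S)(n'/n)^{k-d} \geq (m_d(k)+\eps_2/2)\binom{n'-d}{k-d} > m_d(k,n')$. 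Since moving one vertex in or out of $W_i$ changes $d_{\cH[W_i]}(S)$ by at most $\binom{n'-1}{k-d-1}$, while the required deviation is of order $\eps_2(n')^{k-d}$ and the sum of squared differences (over the $n'-d$ exposure steps) is only of order $(n')^{2(k-d)-1}$, the bounded-differences inequality applied to the process of revealing $W_i$ one vertex at a time gives $\Prob{d_{\cH[W_i]}(S) < m_d(k,n') \mid S \subseteq W_i} \leq \rho$, where $\rho \coloneqq \exp(-\Omega_k(\eps_2^2 n'))$. Consequently the expected number of parts that contain a non-defective witness $d$-set is at most $\rho\sum_S \Prob{S\text{ lies in one part}} = O(\rho(n')^{d-1}n) \leq (\eps_2/100)T$, by the choice of $n'$. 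Adding the two estimates yields $\mathbb{E}[\#\{\text{bad parts}\}] \leq (\eps_2/50)T$, so a partition as required exists, completing the proof.

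The main obstacle is precisely this concentration step: $n'$ is forced to be a fixed constant — we only know that $m_d(k,n')$ is close to $m_d(k)\binom{n'-d}{k-d}$ along some unknown sequence of values of $n'$, so $n'$ cannot be allowed to grow with $n$ — and therefore we cannot use $n\to\infty$ to make the failure probability $\rho$ small. What rescues the argument is that $d_{\cH[W_i]}(S)$ counts $\Theta((n')^{k-d})$ objects, so it has mean of order $(n')^{k-d}$ and we only need accuracy of order $\eps_2(n')^{k-d}$; taking the constant $n'$ large enough (in terms of $k$ and $\eps_2$) then drives $\rho$ below any prescribed negative power of $n'$, which is all the first-moment count needs. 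One should also note that the union over $d$-sets ranges over $\Theta(n^d)$ sets whereas there are only $\Theta(n)$ parts; this discrepancy is absorbed by the factor $\Prob{S\text{ lies in one part}} = O((n'/n)^{d-1})$.
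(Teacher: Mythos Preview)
Your proposal is correct and follows essentially the same approach as the paper: partition $V(\cH)$ into random pieces of a large constant size $m$ chosen so that $m_d(k,m)\le (m_d(k)+\eps_2/2)\binom{m-d}{k-d}$, show by a first-moment (Markov) argument that all but an $\eps_2$-fraction of the pieces have full minimum $d$-degree, and take optimal matchings inside the good pieces. The only cosmetic difference is that the paper outsources the concentration step to a lemma of Ferber and Kwan (their Lemma~\ref{lem:mindeg_random}), whereas you reprove it directly via a bounded-differences inequality.
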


To prove Lemma~\ref{lem:almostPM_general}, we use the following lemma~\cite[Lemma 3.4]{FK2022}.

\begin{lemma}[Ferber and Kwan \cite{FK2022}]\label{lem:mindeg_random}
Let $1/n \ll \delta \ll 1/m \ll \eps \ll c , 1/k < 1$. Let $1 \leq d \leq k-1$. 
Let $\cH$ be an $n$-vertex $k$-uniform hypergraph such that $d_\cH(S) \geq (c + \eps) \binom{n-d}{k-d}$ for all but at most $\delta n^d$ many $S \in \binom{V(\cH)}{d}$.
Let $U$ be a random subset of $V(\cH)$ of size $m$ uniformly chosen from $\binom{V(\cH)}{m}$. With probability at least $1 - m^d (\delta + e^{-\eps^3 m})$, we have $\delta_d (\cH[U]) \geq (c + \eps/2) \binom{m-d}{k-d}$.
\end{lemma}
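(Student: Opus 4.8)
\textbf{Proof proposal for Lemma~\ref{lem:almostPM_general}.}

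The plan is to deduce the existence of an almost perfect matching from the known (asymptotic) Dirac-type threshold $m_d(k)$, by passing to a random subset of vertices on which the full minimum $d$-degree condition holds, extracting an optimal matching there, and then iterating to mop up the uncovered vertices. Concretely, fix constants $1/n \ll \eps_1 \ll 1/m \ll \eps_2 \ll 1/k$. First I would apply Lemma~\ref{lem:mindeg_random} with $c = m_d(k)$ and $\eps = \eps_2$: since $d_\cH(S) \geq (m_d(k) + \eps_2)\binom{n-d}{k-d}$ for all but at most $\eps_1 n^d$ of the $d$-sets $S$, a uniformly random $U \in \binom{V(\cH)}{m}$ satisfies $\delta_d(\cH[U]) \geq (m_d(k) + \eps_2/2)\binom{m-d}{k-d}$ with probability at least $1 - m^d(\eps_1 + e^{-\eps_2^3 m})$, which is positive (indeed close to $1$) by the choice of the hierarchy. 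In particular such a $U$ exists, and since $m_d(k) + \eps_2/2 \geq m_d(k, m)/\binom{m-d}{k-d}$ for $m$ large (by definition of $m_d(k)$ as a $\liminf$, after choosing $m$ appropriately within the hierarchy), $\cH[U]$ contains an optimal matching, i.e.\ a matching covering all but at most $k-1$ vertices of $U$.

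Next I would iterate. Having found an optimal matching $M_1$ in $\cH[U_1]$ with $U_1$ of size $m$, set $V_1 := V(\cH) \setminus V(M_1)$, so $|V_1| \geq n - m$. The key point is that deleting $m$ vertices changes each surviving $d$-degree by at most $\binom{n-d}{k-d} - \binom{n-m-d}{k-d} \leq (km/n)\binom{n-d}{k-d}$, and deletes at most $m \cdot n^{d-1}$ additional $d$-sets from the ``good'' family; so $\cH[V_1]$ still has $d_{\cH[V_1]}(S) \geq (m_d(k) + \eps_2/2)\binom{|V_1|-d}{k-d}$ for all but at most, say, $2\eps_1 n^d$ of its $d$-sets, provided $m$ is small enough relative to $\eps_2 n$ — which holds since $m$ is a constant and $n \to \infty$. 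Thus I can repeat the argument on $\cH[V_1]$: find $U_2 \subseteq V_1$ of size $m$ with $\delta_d(\cH[U_2]) \geq (m_d(k) + \eps_2/4)\binom{m-d}{k-d}$, extract an optimal matching $M_2$, and continue. After $r$ rounds we have a matching $M_1 \cup \dots \cup M_r$ covering at least $r(m - k + 1)$ vertices; stopping when the uncovered set has size at most, roughly, $\eps_2 n$ — which requires $r \approx n/m$ rounds — gives a matching covering all but at most $\eps_2 n < 2\eps_2 n$ vertices. One must check the degree/good-set bounds survive all $r$ rounds simultaneously: after round $i$ we have removed $im \leq n$ vertices total, so in the worst case the good family has shrunk by at most $rm \cdot n^{d-1} = O(n^d)$, which is still far below a constant fraction; similarly the $d$-degrees drop by at most an $O(\eps_2)$-fraction in total. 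Carefully tracking these constants (so that, e.g., at every stage the surviving hypergraph satisfies the hypothesis with $m_d(k) + \eps_2/2$ and error fraction $\leq 2\eps_1$, say) is the bookkeeping core of the proof.

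An alternative, arguably cleaner, route that avoids the iteration: partition $V(\cH)$ into $\lfloor n/m \rfloor$ parts of size $m$ (plus a remainder of size $< m$) by a uniformly random partition, and use Lemma~\ref{lem:mindeg_random} together with a union bound (or a first-moment argument) to show that all but at most $\eps_2 n / m$ of the parts $P$ satisfy $\delta_d(\cH[P]) \geq (m_d(k) + \eps_2/2)\binom{m-d}{k-d}$, hence contain an optimal matching. Taking the union of these optimal matchings covers all but at most $(k-1)\lfloor n/m\rfloor + m + m\cdot(\eps_2 n/m) \leq 2\eps_2 n$ vertices, since $1/m \ll \eps_2$ makes $(k-1)/m < \eps_2$. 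This version needs the concentration in Lemma~\ref{lem:mindeg_random} to be strong enough to union-bound over $n/m$ parts, which it is, as the failure probability $m^d(\eps_1 + e^{-\eps_2^3 m})$ times $n/m$ is still $o(1)$ (or, for the first-moment version, one just needs the expected number of bad parts to be $\leq \eps_2 n/(2m)$, which follows directly).

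\textbf{Main obstacle.} The genuine mathematical content is entirely supplied by Lemma~\ref{lem:mindeg_random} plus the definition of $m_d(k)$; the only real work is the quantitative bookkeeping — verifying that passing to an $m$-subset (resp.\ a random part of size $m$) preserves the ``almost all $d$-sets have large degree'' hypothesis with the slightly worse but still usable parameters, and that $m$ can be chosen large enough (within the hierarchy $\eps_1 \ll 1/m \ll \eps_2$) that $m_d(k, m)/\binom{m-d}{k-d} \leq m_d(k) + \eps_2/2$. The subtlety worth flagging is the direction of the $\liminf$: $m_d(k)$ is a $\liminf$, so $m_d(k,m)$ is only guaranteed to be close to $m_d(k)$ for infinitely many $m$, not all — but since $m$ is a free constant in the hierarchy we simply choose such a good value of $m$, which is harmless. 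Everything else is routine, so I expect the proof in the paper to be short, likely taking the partition route of the third paragraph.
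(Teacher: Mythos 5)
You have proved the wrong statement. The lemma you were asked to prove is Lemma~\ref{lem:mindeg_random} itself, i.e.\ the Ferber--Kwan concentration statement that a uniformly random $m$-subset $U$ inherits the minimum $d$-degree condition, with failure probability at most $m^d(\delta + e^{-\eps^3 m})$. Your write-up instead proves Lemma~\ref{lem:almostPM_general}, and its very first step invokes Lemma~\ref{lem:mindeg_random} as a black box; relative to the assigned statement this is circular, and none of the content of the target lemma is actually established. (For the record, the paper does not prove Lemma~\ref{lem:mindeg_random} either: it quotes it from \cite{FK2022}. And as a proof of Lemma~\ref{lem:almostPM_general}, the ``alternative route'' in your third paragraph essentially coincides with the paper's actual argument there --- split $V(\cH)$ into $\lfloor n/m\rfloor$ random $m$-sets, call a part bad if the induced minimum $d$-degree is too small, bound the expected number of bad parts by $\eps_2^2 t$ via Lemma~\ref{lem:mindeg_random}, apply Markov, and take optimal matchings in the good parts; your point about $m_d(k)$ being a $\liminf$, so that one must choose a suitable $m$ inside the hierarchy with $m_d(k,m)\le (m_d(k)+\eps_2/2)\binom{m-d}{k-d}$, is exactly the care the paper takes. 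But that is a different lemma from the one in question.)

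What a proof of Lemma~\ref{lem:mindeg_random} actually requires is roughly the following, none of which appears in your proposal: take a union bound over the at most $\binom{m}{d}\le m^d$ $d$-subsets of $U$. For a fixed such subset, the probability that it coincides with one of the at most $\delta n^d$ exceptional $d$-sets of $\cH$ is at most $\delta n^d/\binom{n}{d}=O_d(\delta)$, accounting for the $m^d\delta$ term. Conditioned on a non-exceptional $S\subseteq U$, the quantity $d_{\cH[U]}(S)$ counts edges through $S$ whose remaining $k-d$ vertices fall into the random $(m-d)$-set $U\setminus S$; this is a sampling-without-replacement statistic with mean at least $(1-o(1))(c+\eps)\binom{m-d}{k-d}$, so a hypergeometric concentration inequality (e.g.\ Hoeffding for sampling without replacement, or an Azuma/McDiarmid-type bound over the vertex exposure of $U$) shows that $d_{\cH[U]}(S)<(c+\eps/2)\binom{m-d}{k-d}$ with probability at most $e^{-\eps^3 m}$, giving the $m^d e^{-\eps^3 m}$ term. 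Without an argument of this kind (or a verbatim reproduction of the Ferber--Kwan proof), the assigned statement remains unproved.
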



\begin{proof}[Proof of Lemma~\ref{lem:almostPM_general}]
Let $1/n \ll \eps_1 \ll 1/m \ll \eps_2 \ll 1/k$ such that $m_d(k,m) \leq (\underline{\mu_d}(k) + \eps_2/2) \binom{m-d}{k-d}$. 
For $t \coloneqq \lfloor n/m \rfloor$, let $U_1 , \dots , U_t$ be $t$ disjoint random subsets of $V(\cH)$ of size $m$ such that each $U_i$ has a uniform random distribution from $\binom{V(\cH)}{m}$.
For each $i \in [t]$, let $U_i$ be \emph{bad} if $\delta_d (\cH[U_i]) < (\underline{\mu_d}(k) + \eps_2/2) \binom{m-d}{k-d}$, and otherwise \emph{good}. 
By Lemma~\ref{lem:mindeg_random}, for each $i \in [t]$, $\mathbb{P}(\text{$U_i$ is bad}) \leq m^d (\eps_1 + e^{-\eps_2^3 m}) < \eps_2^2$, so the expected number of bad $U_i$'s is at most $\eps_2^2 t$.
By Markov's inequality, with probability at least $1-\eps_2$, the number of bad $U_i$'s is at most $\eps_2 t$. Fix a choice of $U_1 , \dots , U_t$ for which this holds. 
For each of the good $U_i$'s, 
since $m_d(k,m) \leq (\underline{\mu_d}(k) + \eps_2/2) \binom{m-d}{k-d} \leq \delta_d (\cH[U_i])$,
there is an optimal matching $M_i$ of $\cH[U_i]$. Let $M \coloneqq \bigcup_{U_i:\:\text{good}} M_i$. Then
\begin{align*}
    |V(\cH) \setminus V(M)| &\leq \left |V(\cH) \setminus \bigcup_{i=1}^{t}U_i \right | + \sum_{U_i:\:\text{bad}} |U_i| + \sum_{U_i:\:\text{good}} |U_i \setminus V(M_i)| \\
    & \leq m-1 + \eps_2 t \cdot m + (k-1) \cdot (t - \eps_2 t)\\
    & \leq m + \eps_2 n + nk / m < 2 \eps_2 n,
\end{align*}
as desired.
\end{proof}

\section{Vortices and iterative absorption}

The main result of this section is Lemma~\ref{lem:spread_OM}, which essentially guarantees a $O(1 / n^{k - 1})$-spread measure on the set of optimal matchings in a $k$-uniform hypergraph $\cH$ in which a $O(1 / n^{1 - 1/k})$-random subset of vertices of $\cH$ induces a hypergraph with an optimal matching with high probability.  To prove this result, we use an `iterative absorption' approach.

\subsection{Vortices}

Recall from Section~\ref{subsec:outline} that a vertex vortex, formally defined below, is a sequence of vertex sets, which all induce relevant properties of the original hypergraph.  The first step in the proof of Lemma~\ref{lem:spread_OM} is to randomly partition the vertices of $\cH$, and this partition will be a vertex vortex with high probability.

\begin{definition}[Vertex vortex]\label{def:vortex}
    Let $k \geq 2$, and let $\mathcal{H}$ be a $k$-uniform hypergraph on $n$ vertices. For a positive integer $\ell$, a vector $\mathbf{p} = (p_1, \dots, p_\ell)$ of non-negative reals such that $\sum p_i =1$, an integer $d \in [k-1]$, and $\eps, \alpha_1, \alpha_2 > 0$, we say that a partition $(U_1, \dots, U_\ell)$ of $V(\mathcal{H})$ is an \emph{$(\alpha_1, \alpha_2, d, \eps, \mathbf{p})$-vortex for $\mathcal{H}$} if 
    \begin{enumerate}[label = {$(\text{V}\arabic*)$}, leftmargin= \widthof{V1000}]
        \item \label{V1} $\s{U_i} = (1 \pm \eps)p_i n$ for all $i \in [\ell]$,
        \item \label{V2} $d_{\cH[U_i]}(S) \geq (\alpha_1 -\eps)(p_i n)^{k-d}$ for all $i \in [\ell-1]$, and all but $\eps(p_i n)^{d}$ many $S \in \binom{U_i}{d}$, and 
        \item \label{V3} $d_{\mathcal{H}}(v; \binom{U_{i} \setminus \{ v \}}{k-1}) \geq (\alpha_2 - \eps) (p_{i} n)^{k-1}$ for all $i \in [\ell]$ and $v \in V(\cH)$.
    \end{enumerate}
\end{definition}

\begin{definition}[$(\alpha_1, \alpha_2, d, \eps)$-dense]
Let $k \geq 2$, let $d \in \{1 , \dots , k \}$, and let $\alpha_1 , \alpha_2 , \eps \in [0,1]$.
A $k$-uniform hypergraph $\cH$ on $n$ vertices is \emph{$(\alpha_1, \alpha_2, d, \eps)$-dense} if $d_\cH(S) \geq \alpha_1 n^{k-d}$ for all but $\eps n^{d}$ many $S \in \binom{V(\cH)}{d}$ and $d_\cH(v) \geq \alpha_2 n^{k-1}$ for all $v \in V(\cH)$.   
\end{definition}

The next lemma can be proved via a straightforward combination of Chernoff bounds and Lemma \ref{lemma:prob_lemma} with the union bound, so we \APPENDIX{defer it to the appendix}\NOTAPPENDIX{only include its proof in the appendix of the arxiv version of this paper}.

\begin{lemma}[Vortex existence lemma] \label{lem:vortex_existence}
    Let $1/n \ll \eps < \alpha_1, \alpha_2, 1/k < 1$ with $k \geq 3$ and $d \in [k - 1]$. Let $\cH$ be a $(\alpha_1, \alpha_2, d, \eps)$-dense $k$-uniform hypergraph on $n$ vertices. Let $\ell \coloneqq \left\lceil\frac{k-1}{k} \log_2(n)\right\rceil$, $C_\ell \coloneqq \sum_{i=1}^\ell 2^{-i}$, $p_i \coloneqq \frac{1}{C_\ell 2^{i}}$ for each $i \in [\ell]$, and $\mathbf{p} \coloneqq (p_1, \dots, p_\ell)$.
    Independently for each vertex $v \in V(\mathcal{H})$, let $X_v$ be a random variable with values in $[\ell]$ such that $\pr{X_v = i} = p_i$ for each $i \in [\ell]$. For each $i \in [\ell]$, let $U_i \coloneqq \{v \in V(\mathcal{H}) \colon X_v =i\}$. Then we have that \aas $(U_1, \dots, U_\ell)$ is an $(\alpha_1, \alpha_2, d, 2\eps, \mathbf{p})$-vortex for $\mathcal{H}$.
\end{lemma}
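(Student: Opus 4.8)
The plan is to show that each of the three defining properties \ref{V1}--\ref{V3} of an $(\alpha_1,\alpha_2,d,2\eps,\mathbf p)$-vortex holds a.a.s., and then conclude by a union bound over the (finitely many, at most $\ell = O(\log n)$ indices, times polynomially many $d$-sets and $n$ vertices) events. The key observation is that for each $i\in[\ell]$, the set $U_i$ is exactly a $p_i$-random subset of $V(\cH)$ (the events $\{X_v=i\}$ are independent over $v$ with probability $p_i$), and $p_i = \frac{1}{C_\ell 2^i} \geq \frac{1}{2^{\ell+1}} = \Omega(n^{-(k-1)/k})$ since $\ell = \lceil \frac{k-1}{k}\log_2 n\rceil$; in particular $p_i n = \Omega(n^{1/k})$, which is comfortably larger than any fixed power $n^\beta$ with $\beta < 1/k$, so Lemma~\ref{lemma:prob_lemma} will be applicable with room to spare.

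First I would handle \ref{V1}: $|U_i|$ is a sum of $n$ independent Bernoulli$(p_i)$ variables with mean $p_i n$, so by the Chernoff bound (Lemma~\ref{lem:chernoff}) $\pr{|U_i| \neq (1\pm\eps)p_i n} \leq 2\exp(-\eps^2 p_i n/3) \leq 2\exp(-\Omega(\eps^2 n^{1/k}))$, which beats the union bound over $i\in[\ell]$. Next, for \ref{V2}, fix $i\in[\ell-1]$; I want to bound the number of \emph{bad} $d$-sets $S\in\binom{U_i}{d}$, i.e.\ those with $d_{\cH[U_i]}(S) < (\alpha_1-\eps)(p_i n)^{k-d}$. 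For each $d$-set $S\in\binom{V(\cH)}{d}$ that is \emph{good in $\cH$} (meaning $d_\cH(S)\geq \alpha_1 n^{k-d}$, of which there are at least $\binom{n}{d} - \eps n^d$ by $(\alpha_1,\alpha_2,d,\eps)$-density), apply Lemma~\ref{lemma:prob_lemma}\ref{typical_i} to the family $N_\cH(S)\subseteq\binom{V(\cH)}{k-d}$ (which has size $\geq \alpha_1 n^{k-d} \geq \eps n^{k-d}(p_i n)^{-1/2}$ for large $n$) to see that with probability $\geq 1 - \exp(-n^{\Omega(1)})$ we have $d_{\cH[U_i]}(S) = (1\pm\eps)p_i^{k-d} d_\cH(S)$, which exceeds $(\alpha_1-\eps)(p_i n)^{k-d}$ after absorbing constants into $\eps$; summing failure probabilities over the at most $n^d$ good $S$ still gives $o(1)$, and then the bad $d$-sets inside $U_i$ are either the $\leq \eps n^d$ that were already bad in $\cH$ (of which at most that many survive into $U_i$, and a further Chernoff/Lemma~\ref{lemma:prob_lemma}\ref{typical_ii} step bounds how many land in $U_i$ by $2\eps(p_i n)^d \leq \eps (p_i n)^d$ after rescaling — or one simply notes $\eps n^d \geq \eps(p_i n)^d$ trivially suffices if one is willing to lose the factor, though to get the clean $2\eps$ versus $\eps$ bound one invokes \ref{typical_ii}) or else among the good-in-$\cH$ ones that failed typicality, which is a $\pr{}$-bounded event. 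After adjusting the $\eps$ in the hypothesis versus the $2\eps$ in the conclusion, \ref{V2} holds a.a.s.

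For \ref{V3}, fix $i\in[\ell]$ and $v\in V(\cH)$; here $d_\cH(v)\geq\alpha_2 n^{k-1}$ for \emph{every} vertex by density, so I would apply Lemma~\ref{lemma:prob_lemma}\ref{typical_i} to the family $\mathcal F_v \coloneqq N_\cH(\{v\}) \subseteq \binom{V(\cH)\setminus\{v\}}{k-1}$ (of size $\geq\alpha_2 n^{k-1}$, which is $\geq\eps n^{k-1}(p_i n)^{-1/2}$) with the $p_i$-random subset $U_i\setminus\{v\}$ of $V(\cH)\setminus\{v\}$, obtaining $d_\cH(v;\binom{U_i\setminus\{v\}}{k-1}) = (1\pm\eps)p_i^{k-1} d_\cH(v) \geq (\alpha_2-\eps)(p_i n)^{k-1}$ with probability $\geq 1-\exp(-n^{\Omega(1)})$; the union bound over $v\in V(\cH)$ and $i\in[\ell]$ costs only a factor $n\ell = O(n\log n)$, which is crushed by $\exp(-n^{\Omega(1)})$. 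Taking the union bound over all three properties and all relevant indices completes the proof. The only mildly delicate point — and the main thing to get right — is the bookkeeping between the density parameter $\eps$ in the hypothesis and the $2\eps$ in the conclusion: one must ensure that applying Lemma~\ref{lemma:prob_lemma} with its own internal $\eps'$ chosen as a suitable small multiple of $\eps$ yields all the $(1\pm\eps)$ multiplicative fluctuations and the conversion of the $\leq\eps n^d$ bad sets of $\cH$ into $\leq 2\eps(p_i n)^d$ bad sets of $\cH[U_i]$; since everything is a finite hierarchy of constants this is routine, which is exactly why the detailed verification is deferred to the appendix.
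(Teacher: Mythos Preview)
Your proposal is correct and follows essentially the same approach as the paper's proof: Chernoff for \ref{V1}, Lemma~\ref{lemma:prob_lemma}\ref{typical_i} applied to $N_\cH(S)$ for good $d$-sets together with Lemma~\ref{lemma:prob_lemma}\ref{typical_ii} applied to the family of bad $d$-sets for \ref{V2}, and Lemma~\ref{lemma:prob_lemma}\ref{typical_i} applied to $N_\cH(\{v\})$ for \ref{V3}, all combined via a union bound over $i\in[\ell]$ and the polynomially many sets/vertices. The paper's appendix proof is precisely this, and your identification of the $\eps$-versus-$2\eps$ bookkeeping (handled via \ref{typical_ii} for the bad sets) is exactly the one nontrivial point.
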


\subsection{Matchings inside vortex sets}\label{subsection:blowing-up}

To prove Lemma~\ref{lem:spread_OM}, we will find a well-spread measure on almost perfect matchings in each `level' of the vertex vortex using the weak hypergraph regularity lemma (Theorem~\ref{thm:Weak_HRL}).  The following lemma is key for this approach.

\begin{lemma}[Random matching in an $\eps$-regular $k$-tuple] \label{lem:random_M_in_k_tuple}
Let $1/n \ll \eps \ll d, 1/k < 1$. Let $\mathcal{H}$ be a $k$-partite $k$-uniform hypergraph with partition $(V_1, \dots, V_k)$ such that $\s{V_1} = \dots = \s{V_k} = n$ and $(V_1, \dots, V_k)$ is $\eps$-regular with density at least $d$.  Then there exists a $(1/(\eps^2n^{k-1}))$-spread probability measure on the set of matchings in~$\mathcal{H}$ which cover all but at most $2k\eps^{1/k}n$ vertices.
\end{lemma}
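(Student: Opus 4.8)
The plan is to analyze the natural random greedy process that builds a matching of $\cH$ one edge at a time, at each round choosing the next edge uniformly at random among all edges avoiding the already-matched vertices, and to stop as soon as some part of the partition has shrunk to roughly $\eps^{1/k}n$ vertices --- the threshold below which $\eps$-regularity no longer guarantees that an edge exists. I expect the uniform choice at each round, together with the fact that $\Omega(n^k)$ candidate edges remain while the process runs, to force enough ``spreading''.

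Concretely, I would set $m \coloneqq n - \lceil \eps^{1/k} n \rceil$ and run $m$ rounds starting from $M_0 \coloneqq \varnothing$. At the start of round $i \in [m]$, let $W^{(i)}_j \subseteq V_j$ be the set of vertices of $V_j$ not covered by the current partial matching $M_{i-1}$; since each round matches exactly one vertex of each part, $\s{W^{(i)}_j} = n-i+1$ for all $j \in [k]$. As $n-i+1 \geq \lceil \eps^{1/k} n\rceil$, we get $\prod_{j\in[k]}\s{W^{(i)}_j} \geq \eps n^k = \eps\prod_{j\in[k]}\s{V_j}$, so $\eps$-regularity (with density at least $d$) yields $e_{\cH}(W^{(i)}_1,\dots,W^{(i)}_k) \geq (d-\eps)(n-i+1)^k > 0$. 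Thus I can let $f_i$ be uniformly random among the nonempty set of edges of $\cH$ with one vertex in each $W^{(i)}_j$, set $M_i \coloneqq M_{i-1}\cup\{f_i\}$, and output $M \coloneqq M_m$. Then $M$ is a matching covering $km$ vertices, so it misses $k(n-m) = k\lceil\eps^{1/k}n\rceil \leq 2k\eps^{1/k}n$ vertices (using $n \geq \eps^{-1/k}$, which follows from $1/n \ll \eps$). Taking $\nu$ to be the law of $M$ on the matchings of $\cH$, it remains to show that $\nu$ is $(1/(\eps^2 n^{k-1}))$-spread.

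The key estimate is that for every $f\in\cH$ and every round $i$, conditioned on the history before round $i$, the probability that $f_i = f$ is at most $\bigl((d-\eps)(n-i+1)^k\bigr)^{-1}$: this is immediate if $f$ already meets $M_{i-1}$ (the probability is $0$), and otherwise $f$ is one of at least $(d-\eps)(n-i+1)^k$ equiprobable candidates. Now fix pairwise disjoint $e_1,\dots,e_s\in\cH$ (if they are not pairwise disjoint the probability in question is $0$). On the event $\{e_1,\dots,e_s\in M\}$ there is a unique increasing tuple $i_1 < \dots < i_s$ in $[m]$ and a unique bijection $\pi\colon[s]\to[s]$ with $f_{i_j} = e_{\pi(j)}$ for all $j\in[s]$; applying the key estimate round by round (via the tower property, using that each of the first $j-1$ events is measurable with respect to the history before round $i_j$) bounds the probability of each such pattern by $\prod_{j=1}^{s}\bigl((d-\eps)(n-i_j+1)^k\bigr)^{-1}$. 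Summing over the $s!$ bijections and over all increasing tuples, and using $\sum_{i_1<\dots<i_s}\prod_j(n-i_j+1)^{-k} \leq \tfrac{1}{s!}\bigl(\sum_{i=1}^m (n-i+1)^{-k}\bigr)^s$, I obtain
\[
    \pr{e_1,\dots,e_s\in M} \;\leq\; \left(\frac{1}{d-\eps}\sum_{i=1}^{m}\frac{1}{(n-i+1)^k}\right)^{\!s}.
\]
Finally, $\sum_{i=1}^m (n-i+1)^{-k} = \sum_{\eps^{1/k}n < j \leq n} j^{-k} \leq C_k(\eps^{1/k}n)^{-(k-1)}$ for a constant $C_k$ depending only on $k$, so the bracketed quantity is at most $C_k(d-\eps)^{-1}\eps^{-(k-1)/k}n^{-(k-1)}$, which is at most $(\eps^2 n^{k-1})^{-1}$ provided $C_k\eps^{1+1/k}\leq d-\eps$; this last inequality holds since $\eps\ll d,1/k$. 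Hence $\nu$ is $(1/(\eps^2 n^{k-1}))$-spread, as required.

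The main difficulty I anticipate is calibrating the stopping time against the two quantities it controls. The lower bound $(d-\eps)(n-i+1)^k$ on the number of available edges is the only place $\eps$-regularity enters, and it forces termination once a part shrinks to $\approx\eps^{1/k}n$; this is exactly what produces the $2k\eps^{1/k}n$ slack in the coverage and what makes $\sum_i(n-i+1)^{-k}$ of order $(\eps^{1/k}n)^{-(k-1)}$, so one must check that this is comfortably below $1/(\eps^2 n^{k-1})$ --- which is where the hierarchy $\eps\ll d,1/k$ is invoked. The rest, namely that the per-round bounds multiply to give the clean $q^s$ bound after summing over the $s!$ orderings, is routine bookkeeping.
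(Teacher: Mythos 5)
Your proposal is correct, but it gets spreadness by a genuinely different mechanism than the paper. The paper's proof also runs a greedy process, but it is anchored to an enumeration $u_1,\dots,u_n$ of $V_1$: at step $j$ it chooses a uniformly random edge through $u_j$ into the residual parts, \emph{provided} at least $\eps^2 n^{k-1}$ such edges exist, and otherwise discards $u_j$ into a waste set $W$. With that design each edge has a unique step at which it can appear and is then one of at least $\eps^2 n^{k-1}$ equiprobable options, so the $(1/(\eps^2 n^{k-1}))^s$ bound for a matching $e_1,\dots,e_s$ is immediate from the tower property; regularity is used only once, at the end, to show $|W|\leq \eps^{1/k}n$ and hence the coverage bound. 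Your process instead picks a uniformly random edge among \emph{all} edges avoiding the matched vertices, for a fixed number $m=n-\lceil\eps^{1/k}n\rceil$ of rounds; there is no designated round for a given edge, so your per-round bound $1/((d-\eps)(n-i+1)^k)$ must be summed over rounds and orderings, and the clean $q^s$ bound comes out of the inequality $\sum_{i_1<\dots<i_s}\prod_j a_{i_j}\leq \frac{1}{s!}(\sum_i a_i)^s$ together with the integral estimate $\sum_{j>\eps^{1/k}n} j^{-k}=O_k((\eps^{1/k}n)^{-(k-1)})$ and the hierarchy $\eps\ll d,1/k$. Your route uses regularity at every round (to guarantee $(d-\eps)(n-i+1)^k$ candidates, which also makes the process well-defined since $(n-i+1)^k\geq\eps n^k$ throughout), but in exchange you avoid the waste-set analysis and obtain a deterministic coverage count $k\lceil\eps^{1/k}n\rceil\leq 2k\eps^{1/k}n$; the paper's route buys a one-line spreadness verification at the cost of the regularity argument bounding $|W|$. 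Both arguments implicitly treat $e_1,\dots,e_s$ as distinct (your reduction to pairwise disjoint edges), which matches the paper's own convention in its spreadness verifications, so this is not a gap. All the estimates you invoke check out, so your proof is a valid alternative to the one in the paper.
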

\begin{proof}
We define a probability measure on the set of matchings in $\cH$ which cover all but at most $2k\eps^{1/k}n$ vertices by randomly constructing a matching $M$ as follows.
Let $u_1 , \dots , u_n$ be an enumeration of the vertices in $V_1$. Let $M_0 \coloneqq \varnothing$, $W_0 \coloneqq \varnothing$, and $V_{i,0} \coloneqq V_i$ for each $i \in [k]$. We define $M_j \supseteq M_{j-1}$, $W_j \supseteq W_{j-1}$, $V_{i,j} \subseteq V_{i,j-1}$ for each $i \in [k]$ inductively to satisfy $|M_j| = n - |V_{2,j}|$, $|W_j| \leq |W_{j-1}| + 1$, and $|V_{2,j}| = \dots = |V_{k,j}| = n - j + |W_j|$ for each $j \geq 1$, until $|V_{2,j}| = \dots = |V_{k,j}| < 2 \eps^{1/k} n$.

Suppose $|V_{2,j-1}| = \dots = |V_{k,j-1}| \geq 2 \eps^{1/k} n$. We consider the following two cases:
\begin{itemize}
    \item If $e_\cH(u_j , V_{2,j-1} , \dots , V_{k,j-1}) < \eps^2 n^{k-1}$, then define $M_j \coloneqq M_{j-1}$, $W_j \coloneqq W_{j-1} \cup \{ u_j \}$, and $V_{i,j} \coloneqq V_{i,j-1}$ for each $2 \leq i \leq k$.
    
    \item Otherwise, if $e_\cH(u_j , V_{2,j-1} , \dots ,  V_{k,j-1}) \geq \eps^2 n^{k-1}$, then choose $(v_{2,j}, \dots, v_{k,j}) \in V_{2,j-1} \times \dots \times V_{k,j-1}$ uniformly at random so that $u_j v_{2,j} \dots v_{k,j} \in E_\cH(u_j , V_{2,j-1} , \dots, V_{k,j-1})$. 
    Define $M_j \coloneqq M_{j-1} \cup \{ u_j v_{2,j} \dots v_{k,j} \}$, $W_j \coloneqq W_{j-1}$, and $V_{i,j} \coloneqq V_{i,j-1} \setminus \{v_{i,j} \}$ for each $2 \leq i \leq k$.
\end{itemize}

Let $t \in [n]$ be the first index such that $|V_{2,t}| = \dots = |V_{k,t}| < 2 \eps^{1/k} n$. If such an index does not exist, then let $t \coloneqq n$. For either of the cases, we have $|V_{2,t}| = \dots = |V_{k,t}| > 2 \eps^{1/k} n - 1$.

Let $M \coloneqq M_t$. 
Since each edge of $\mathcal{H}$ is added to $M$ with probability at most $1/(\eps^2 n^{k-1})$ conditional on all other previous random choices,
the resulting measure is $1/(\eps^2 n^{k-1})$-spread\COMMENT{, since for every $s \geq 1$ and $e_1, \dots, e_s \in \cH$, assuming without loss of generality that $e_1, \dots, e_s$ is a matching where if $u_{i'} \in e_i \cap V_1$ and $u_{j'} \in e_j \cap V_1$ where $i' \leq j'$, then $i \leq j$, we have
\begin{equation*}
    \Prob{e_1, \dots, e_s \in M} = \prod_{i=1}^s \ProbCond{e_i \in M}{e_1, \dots, e_{i-1} \in M} \leq \left(\frac{1}{\eps^2 n^{k-1}}\right)^{s},
\end{equation*}
as required.}.


Now we aim to bound $|W_t|$. Indeed, for each $j \geq 1$ such that $u_j \in W_t$, we have
\begin{equation*}
    e_\cH(u_j , V_{2,t} , \dots, V_{k,t}) \leq e_\cH(u_j , V_{2,j-1} , \dots,  V_{t,j-1}) < \eps^2 n^{k-1},    
\end{equation*}
so $e_\cH(W_t , V_{2,t} , \dots,  V_{k,t}) < \eps^2 n^{k-1} |W_t| \leq \eps^2 n^k$. Since $|V_{2,t}|= \dots =|V_{k,t}| > 2\eps^{1/k} n - 1$, if $|W_t| > \eps^{1/k}n$, then $|W_t||V_{2,t}|\cdots|V_{k,t}| > \eps n^k = \eps |V_1|\cdots|V_k|$ while $e_\cH(W_t , V_{2,t} , \dots, V_{k,t}) < \eps^2 n^k < (d-\eps)|W_1|\s{V_{2,t}}\cdots \s{V_{k,t}}$, contradicting the assumption that $(V_1 , \dots , V_k)$ is $\eps$-regular with density at least $d$. Thus, $|W_t| \leq \eps^{1/k} n$. This also implies that $t < n$ and $|V_{2,t}| = \dots = |V_{k,t}| < 2 \eps^{1/k} n$; otherwise if $t = n$, then $|V_{2,n}| = \dots = |V_{k,n}| = n - (n - |W_t|) \leq \eps^{1/k} n$, which contradicts $|V_{2,t}| = \dots = |V_{k,t}| > 2 \eps^{1/k} n - 1$. 
Since $|M| = |M_t| = n - |V_{2,t}| \geq n - 2 \eps^{1/k} n$, the matching $M$ covers all but at most $2k \eps^{1/k} n$ vertices.
\end{proof}

The next lemma shows that we can find a well-spread measure on almost perfect matchings within a vortex set $U_i$.
\begin{lemma}[Random Matching inside a vortex set] \label{lem:M_in_votex_set}
Let $1/n \ll \delta \ll \eps_1 \ll \eps_2 \ll 1/k < 1$ with $k \geq 3$ and $d \in [k - 1]$. Let $\cH$ be a $k$-uniform hypergraph on $n$ vertices such that $d_\cH(S) \geq (\underline{\mu_d}(k) + \eps_2)\binom{n}{k-d}$ for all but at most $\eps_1 n^{d}$ many $d$-sets $S \in \binom{V(\cH)}{d}$. 
Then there exists a $(1/(\delta n^{k-1}))$-spread probability measure on the set of matchings in $\mathcal{H}$ which cover all but at most $2\eps_2 n$ vertices.
\end{lemma}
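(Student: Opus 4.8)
The plan is to feed the weak hypergraph regularity lemma into the three preceding lemmas. First I would apply Theorem~\ref{thm:Weak_HRL} to $\cH$ to obtain an $\eps$-regular partition $(V_0,V_1,\dots,V_t)$ with $t_0\le t\le t_1$, where the auxiliary constants are chosen so that $1/n\ll\eps_1\ll 1/t_1\le 1/t\ll\eps\ll\gamma\ll\eps_2\ll 1/k$. Let $\cR$ be the $(\gamma/3,\eps)$-reduced hypergraph on $[t]$. Since $\binom{n}{k-d}\ge\binom{n-d}{k-d}$, the hypothesis gives $d_\cH(S)\ge(m_d(k)+\eps_2)\binom{n-d}{k-d}$ for all but at most $\eps_1 n^d$ of the $d$-sets $S$, so Lemma~\ref{lem:reduced_degree} (with $c=m_d(k)+\eps_2$ and $\eta=\eps_1$) shows that all but at most $\eps^{1/2}\binom{t}{d}$ of the $d$-sets $S\in\binom{[t]}{d}$ satisfy $d_\cR(S)\ge(m_d(k)+\eps_2-\gamma)\binom{t-d}{k-d}\ge(m_d(k)+\eps_2/2)\binom{t-d}{k-d}$. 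I would then apply Lemma~\ref{lem:almostPM_general} to $\cR$ — legitimate since $1/t\ll\eps^{1/2}\ll\eps_2/2\ll 1/k$ — to obtain a (deterministic) matching $M_\cR$ in $\cR$ covering all but at most $\eps_2 t$ of its vertices.

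Next, for each edge $f=\{i_1,\dots,i_k\}\in M_\cR$, the tuple $(V_{i_1},\dots,V_{i_k})$ is $\eps$-regular of density at least $\gamma/3$, with all parts of common size $n':=(n-|V_0|)/t$; since $1/n'\ll\eps\ll\gamma/3$, Lemma~\ref{lem:random_M_in_k_tuple} supplies a $(1/(\eps^2(n')^{k-1}))$-spread probability measure $\nu_f$ on matchings in $\cH[V_{i_1},\dots,V_{i_k}]$ covering all but at most $2k\eps^{1/k}n'$ vertices of that tuple. I would sample $M_f\sim\nu_f$ independently over the edges $f$ of $M_\cR$ and output the distribution of $M:=\bigcup_{f\in M_\cR}M_f$; this is a matching in $\cH$ because distinct edges of $M_\cR$ correspond to vertex-disjoint tuples of clusters.

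It then remains to verify the two conclusions. For the covering bound, an uncovered vertex of $\cH$ either lies in $V_0$ (at most $\eps n$), in a cluster missed by $M_\cR$ (at most $\eps_2 t\cdot n'\le\eps_2 n$), or among the at most $2k\eps^{1/k}n'$ vertices left uncovered inside one of the at most $t/k$ tuples of $M_\cR$ (at most $2\eps^{1/k}n$ in total); as $\eps,\eps^{1/k}\ll\eps_2$, these sum to at most $2\eps_2 n$. For spreadness, fix disjoint $e_1,\dots,e_s\in\cH$. If some $e_j$ does not have exactly one vertex in each part of some tuple of $M_\cR$, then $e_j\notin M$ with certainty and $\pr{e_1,\dots,e_s\in M}=0$. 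Otherwise, writing $s_f$ for the number of $e_j$ contained in the tuple $f$ (so $\sum_f s_f=s$), independence across the tuples together with the spreadness of each $\nu_f$ gives
\[
  \pr{e_1,\dots,e_s\in M}\le\prod_{f\in M_\cR}\Bigl(\frac{1}{\eps^2(n')^{k-1}}\Bigr)^{s_f}=\Bigl(\frac{1}{\eps^2(n')^{k-1}}\Bigr)^{s}\le\Bigl(\frac{2t^{k-1}}{\eps^2 n^{k-1}}\Bigr)^{s}\le\Bigl(\frac{1}{\delta n^{k-1}}\Bigr)^{s},
\]
where the final inequality uses that $\delta$ was chosen with $\delta\ll\eps^2/t^{k-1}$, which is harmless since $t\le t_1$ is bounded.

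The hard part will be purely organisational: arranging the constant hierarchy so that all four ingredients fire simultaneously. The binding constraints are that Lemma~\ref{lem:reduced_degree} forces the bad fraction $\eps_1$ to be small relative to $1/t$ (hence $\eps_1\ll 1/t_1$, so $\eps_1$ must be chosen after the regularity parameters), and that the per-tuple spread $1/(\eps^2(n')^{k-1})\asymp t^{k-1}/(\eps^2 n^{k-1})$ must be pushed below $1/(\delta n^{k-1})$ (so $\delta\ll\eps^2/t^{k-1}$); both are consistent because $t$ is bounded in terms of $\eps$ and hence ultimately of $\eps_2$. The only conceptually new ingredient — that independent per-tuple $q$-spread measures glue into a globally $q$-spread measure on the disjoint union — is exactly the one-line computation displayed above.
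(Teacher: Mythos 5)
Your proposal is correct and follows essentially the same route as the paper's proof: regularise, pass the degree condition to the $(\gamma/3,\eps)$-reduced hypergraph via Lemma~\ref{lem:reduced_degree}, extract an almost perfect matching $M_\cR$ via Lemma~\ref{lem:almostPM_general}, place an independent $(1/(\eps^2 n_*^{k-1}))$-spread matching in each tuple of $M_\cR$ via Lemma~\ref{lem:random_M_in_k_tuple}, and glue, with the same coverage accounting and the same observation that $t\le t_1$ is bounded so the per-tuple spread is at most $1/(\delta n^{k-1})$. No gaps worth noting.
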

\begin{proof}
We define a probability measure on the set of matchings in $\cH$ which cover all but at most $2\eps_2 n$ vertices by randomly constructing a matching $M$ as follows.
Fix new constants $t_1$, $t_0$, $\eps$, and $\gamma$ such that $\eps_1 \ll 1/t_1 \ll 1/t_0 \ll \eps \ll \gamma \ll \eps_2$. By \cref{thm:Weak_HRL}, there exists an $\eps$-regular partition $(V_0, V_1, \dots, V_t)$ of $V(\cH)$ with $t_0 \leq t \leq t_1$. 
Let $\cR$ be the $(\gamma/3, \eps)$-reduced graph with respect to $(V_0, V_1, \dots, V_t)$.
By Lemma~\ref{lem:reduced_degree}, all but at most $\eps^{1/2} \binom{t}{d}$ many $d$-sets $S \in \binom{[t]}{d}$ satisfy $d_{\cR}(S) \geq (\underline{\mu_d}(k) + \eps_2/2)\binom{t}{k-d}$. Thus, by Lemma~\ref{lem:almostPM_general}, $\cR$ has a matching $M_{\cR}$ covering all but at most $\eps_2 t$ vertices.
Let $n_* \coloneqq \frac{n - \s{V_0}}{t} \geq (1- \eps) \frac{n}{t}$.
For each $S = \{i_1, \dots, i_k \} \in M_\cR$, by \cref{lem:random_M_in_k_tuple}, there exists a probability measure $\nu_S$ on the set of matchings in $\cH[V_{i_1}, \dots, V_{i_k}]$ that cover all but at most $2k\eps^{1/k}n_*$ of the vertices in $V_{i_1} \cup \dots \cup V_{i_k}$ 
that is $(1/(\eps^2n_*^{k-1}))$-spread.
Choose $M = \bigcup_{S \in M_\cR} M_S$ where each $M_S$ is chosen independently at random according to $\nu_S$.
Since
\[\label{eq:spreadness-blowup}
\frac{1}{\eps^2 n_*^{k-1}} \leq \frac{t^{k-1}}{\eps^2 (1-\eps)^{k-1}n^{k-1}} \leq \frac{1}{\delta n^{k-1}},
\]
the probability measure on $M$ is $(1/(\delta n^{k-1}))$-spread\COMMENT{, since for every $s \geq 1$ and $e, \dots, e_s \in \cH$, letting $X_S \coloneqq \cH[V_{i_1}, \dots, V_{i_k}] \cap \{e_1, \dots, e_s\}$ for every $S = \{i_1, \dots, i_k\} \in \cR$, we have
\begin{equation*}
    \Prob{e_1, \dots, e_s \in M} \leq \prod_{S \in \cM_R} \Prob{X_S \subseteq M_S} \leq \prod_{S \in \cM_R}\left(\frac{1}{\eps^2 n_*^{k-1}} \right)^{|X_S|} \leq \left(\frac{1}{\delta n^{k-1}}\right)^{s}.
\end{equation*}
}.
Moreover, $M$ covers all but at most
\[
\eps n + 2k\eps^{1/k} n_* \cdot \frac{t}{k} + \eps_2 t \cdot n_* \leq 2\eps_2 n
\]
vertices of $\cH$, as desired.
\end{proof}

\subsection{Covering down}

The following lemma will be used to cover the vertices in some vertex set $U_i$ by edges whose other vertices lie in $U_{i+1}$, i.e., we will apply it with $A$ playing the role of the set of uncovered vertices in $U_i$ and $B$ that of $U_{i+1}$.

\begin{lemma}[Cover-down lemma] \label{lem:M_cover_down}
Suppose $1/n \ll \eta \ll \delta \ll c, 1/k < 1$. Let $\cH$ be a $k$-uniform hypergraph on $n$ vertices, and let $(A,B)$ be a partition of $V(\cH)$ such that $\s{A} \leq \eta n$ and for each $v \in A$, $d_{\cH}(v; \binom{B}{k-1}) \geq cn^{k-1}$. Then there exists a $(1 / (\delta n^{k-1}))$-spread probability measure on the set of matchings $M$ in $\cH$ of size $\s{A}$ that cover~$A$ and satisfy $\s{e \cap A} = 1$ for each $e \in M$.
\end{lemma}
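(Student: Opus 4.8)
The plan is to construct the matching $M$ by a random greedy process that picks one edge per vertex of $A$, with each edge chosen uniformly from a suitably large pool of available edges. Enumerate $A = \{a_1, \dots, a_m\}$ where $m = |A| \leq \eta n$. We build $M = \{f_1, \dots, f_m\}$ one edge at a time: having chosen $f_1, \dots, f_{i-1}$ (a matching with $a_j \in f_j$ and $f_j \setminus \{a_j\} \subseteq B$ for $j < i$), we consider the set of edges $e \in \cH$ with $a_i \in e$, $e \setminus \{a_i\} \subseteq B$, and $e$ disjoint from $V(f_1) \cup \dots \cup V(f_{i-1})$, and we pick $f_i$ uniformly at random from this set. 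The key point is a lower bound on the size of this pool: since $d_\cH(a_i; \binom{B}{k-1}) \geq c n^{k-1}$ and the already-chosen edges use at most $m(k-1) \leq \eta k n$ vertices of $B$, the number of edges through $a_i$ meeting those vertices is at most $\eta k n \cdot \binom{n}{k-2} \leq \eta k n^{k-1}$, so the pool has size at least $c n^{k-1} - \eta k n^{k-1} \geq (c/2) n^{k-1}$, say, using $\eta \ll c$.

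The process never gets stuck, so it produces a matching $M$ of size $m$ covering $A$ with $|e \cap A| = 1$ for each $e \in M$; let $\nu$ be the resulting probability measure on such matchings. It remains to check $\nu$ is $(1/(\delta n^{k-1}))$-spread. Fix $s \geq 1$ and edges $e_1, \dots, e_s \in \cH$. We may assume $e_1, \dots, e_s$ form a matching each meeting $A$ in exactly one vertex (otherwise the probability is zero or the edges are among some other part of the process, but here every edge of any matching produced meets $A$ in exactly one vertex), say $e_j \ni a_{i_j}$ with $i_1 < \dots < i_s$. Then
\[
    \pr{e_1, \dots, e_s \in M} = \prod_{j=1}^{s} \pr{f_{i_j} = e_j \mid f_{i_j} = e_j \text{ for all } j' < j \text{ and consistency}},
\]
and each factor is at most $\frac{1}{(c/2) n^{k-1}}$ because conditioned on any history, $f_{i_j}$ is chosen uniformly from a pool of size at least $(c/2)n^{k-1}$, and at most one element of that pool equals $e_j$. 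Hence $\pr{e_1, \dots, e_s \in M} \leq (2/(cn^{k-1}))^s \leq (1/(\delta n^{k-1}))^s$ since $\delta \ll c$, which is exactly $q$-spreadness with $q = 1/(\delta n^{k-1})$.

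To make the conditioning argument clean I would formalize it by revealing the random choices $f_1, f_2, \dots$ in order and bounding $\pr{e_j \in M \mid f_1 = g_1, \dots, f_{i_j - 1} = g_{i_j - 1}}$ for every admissible history $(g_1, \dots, g_{i_j-1})$: either $e_j$ is inconsistent with the history (disjointness fails), giving probability $0$, or $e_j$ lies in the pool for step $i_j$, which has size $\geq (c/2)n^{k-1}$, giving probability $\leq 2/(cn^{k-1})$; steps strictly between $i_{j-1}$ and $i_j$ are summed out and contribute a factor $1$. Telescoping over $j = 1, \dots, s$ yields the bound. The main obstacle is purely bookkeeping: making sure the pool lower bound is valid at \emph{every} step (not just in expectation), which is where the crude count $|V(f_1) \cup \dots \cup V(f_{i-1})| \leq (k-1)|A| \leq (k-1)\eta n$ together with $\eta \ll c$ does the job, and making sure the spreadness computation correctly handles the case where some $e_j$ does not meet $A$ in exactly one vertex (then $e_j$ can never be in $M$, so the probability is $0 \leq q^s$) and the case where two of the $e_j$ share a vertex of $A$ (again probability $0$, since $M$ uses each vertex of $A$ once).
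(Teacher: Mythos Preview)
Your proof is correct and essentially identical to the paper's: both enumerate $A$, run a random greedy process that at step $i$ picks an edge through $a_i$ with its remaining $k-1$ vertices in the unused part of $B$ uniformly at random, bound the pool size below by $(c/2)n^{k-1}$ via the crude count $|V(M_{i-1}) \cap B| \leq (k-1)|A| \leq (k-1)\eta n$, and deduce the spread bound $\leq (2/(cn^{k-1}))^s \leq (1/(\delta n^{k-1}))^s$ by telescoping over the ordered steps.
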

\begin{proof}
We define a probability measure on the set of matchings $M$ in $\cH$ of size $\s{A}$ that cover $A$ and $\s{e \cap A} = 1$ for each $e \in M$ by randomly constructing a matching $M$ as follows.
Let $m \coloneqq \s{A}$, and let $u_1, \dots, u_m$ be an enumeration of the vertices in $A$. Independently for each $i = 1, \dots, m$ in order, choose $S_i \in \binom{B \setminus \bigcup_{j=1}^{i-1}S_j}{k-1}$ such that $e_i \coloneqq u_i \cup S_i \in \cH$ uniformly at random. Let $M \coloneqq \{e_i \colon i \in [m]\}$. Note that for each $i \in [m]$, we have
\begin{align*}
d_\cH\left(u_i; \binom{B\setminus \bigcup_{j=1}^{i-1}S_j}{k-1}\right) &\geq d_\cH\left(u_i; \binom{B}{k-1}\right) - (k-1)|A||B|^{k-2}\\
&\geq cn^{k-1} - (k-1) \eta n^{k-1} \geq \frac{cn^{k-1}}{2}.    
\end{align*}
Hence each edge $e \in \cH$ is added to $M$ with probability at most ${2}/({cn^{k-1}}) \leq {1}/({\delta n^{k-1}})$ irrespective of all other random choices. It follows that the resulting measure is $({1}/({\delta n^{k-1}}))$-spread\COMMENT{, since for every $s \geq 1$ and $e_1, \dots, e_s \in \cH$, assuming without loss of generality that $e_1, \dots, e_s$ is a matching where if $u_{i'} \in e_i \cap A$ and $u_{j'} \in e_j \cap A$ where $i' \leq j'$, then $i \leq j$, we have
\begin{equation*}
    \Prob{e_1, \dots, e_s \in M} = \prod_{i=1}^s \ProbCond{e_i \in M}{e_1, \dots, e_{i-1} \in M} \leq \left(\frac{1}{\delta n^{k-1}}\right)^{s},
\end{equation*}
as required.}.
\end{proof}

\subsection{Spreadness of random matchings}

Given a vertex vortex $(U_1, \dots, U_\ell)$ of a hypergraph $\cH$, we can iteratively apply Lemmas~\ref{lem:M_in_votex_set} and \ref{lem:M_cover_down} $\ell - 1$ times to obtain a well-spread measure on matchings of $\cH$ which cover all vertices of $\cH$ not in $U_\ell$.  However, edges in `lower levels' (i.e.\ $U_i$ for $i$ close to $\ell$) of the vortex may be more likely to appear in this matching than edges in `higher levels' (i.e.\ $U_i$ for $i$ close to $1$), so we need to introduce the following `weighted' version of spreadness.  Since edges are less likely to appear in the lower levels of a random vortex, the spreadness `balances'.

\begin{definition}
Let $\cH$ be a $k$-uniform hypergraph, and let $\mathbf q = \left(q_e\right)_{e\in\cH}$ where $q_e \in [0, 1]$ for every $e\in\cH$.  A probability measure $\nu$ on the set of matchings in $\cH$ is \textit{$\mathbf q$-spread} if for every $S \subseteq \cH$, we have
\begin{equation*}
    \Prob{S \subseteq M} \leq \prod_{e\in S}q_e,
\end{equation*}
where $M$ is chosen at random according to $\nu$.
\end{definition}

Given a vertex vortex $(U_1, \dots, U_\ell)$, the following lemma provides a $\mathbf q$-spread measure for appropriately chosen $\mathbf q$ on matchings which cover all vertices not in $U_\ell$.  For technical reasons discussed later, we also need to control the parity of these matchings, we need these matchings to avoid a small `protected' set $U_* \subseteq U_\ell$, and we need that these matchings do not cover too many vertices of $U_\ell$.

\begin{lemma} \label{lem:M_given_vortex}
Suppose $1/n \ll \delta \ll \eps_* \ll \eps \ll c, 1/k < 1$ with $k \geq 3$, and $d \in [k-1]$. 
Let $\ell \coloneqq \left\lceil\frac{k-1}{k} \log_2(n)\right\rceil$, $C_\ell \coloneqq \sum_{i=1}^\ell 2^{-i}$, $p_i \coloneqq \frac{1}{C_\ell 2^{i}}$ for each $i \in [\ell]$, and $\mathbf{p} \coloneqq (p_1, \dots, p_\ell)$. 
Let $\cH$ be a $k$-uniform hypergraph on $n$ vertices, and let $(U_1, \dots, U_\ell)$ be an $\left(\frac{\underline{\mu_d}(k) + 4\eps}{(k-d)!}, c, d, \eps_*, \mathbf{p}\right)$-vortex for $\cH$. Let $U_* \subseteq U_\ell$ with $\s{U_*} \leq \eps \s{U_\ell}$ and $s \in \{0,1\}$. Let $\mathbf{q} \coloneqq \left(q_e\right)_{e\in\cH}$, where for each $e \in \cH$, 
\begin{equation*}
    q_e \coloneqq \left\{\begin{array}{l l}
    \frac{1}{\delta (p_i n)^{k-1}} & \text{ if $e \subseteq U_i$ for some $i \in [\ell -1]$},\\
    \frac{1}{\delta (p_{i+1} n)^{k-1}} & \text{ if $e \subseteq U_i \cup U_{i+1}$ and $\s{e \cap U_i} = 1$ for some $i \in [\ell -1]$}, \\
    1 & \text{ if $e \subseteq U_\ell$, and}\\ 
     0 & \text{ otherwise.}
    \end{array}\right.
\end{equation*}
Then there exists a $\mathbf{q}$-spread probability measure on the set of matchings $M$ in $\cH$ which satisfy $\s{M} \equiv s \modu{2}$, $U_* \subseteq V(\cH) \setminus V(M) \subseteq U_\ell$, and $\s{V(M) \cap U_\ell} \leq \eps^2 p_\ell n$.
\end{lemma}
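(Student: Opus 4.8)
The plan is to iteratively apply Lemmas~\ref{lem:M_in_votex_set} and~\ref{lem:M_cover_down} along the vortex, constructing the matching $M$ in stages $i = 1, \dots, \ell-1$, where in stage $i$ we first find an almost perfect matching of $\cH[U_i']$ (for $U_i'$ the set of uncovered vertices of $U_i$ so far) and then cover the remaining uncovered vertices of $U_i$ by edges meeting $U_{i+1}$ in exactly $k-1$ vertices. Before any of this, I would handle the parity and the protected set: reserve $U_*$ inside $U_\ell$ so it is never touched, and reserve a handful of extra candidate edges in the top level to adjust the parity of $\s M$ at the end (or, more cleanly, absorb the parity correction into stage $1$ by deciding up front whether to delete one carefully-chosen edge entirely contained in $U_1$, which only changes the number of uncovered vertices by $O(1)$). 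The point of deferring everything to $U_\ell$ rather than finishing there is that the statement only asks for a matching covering $V(\cH) \setminus U_\ell$ up to a small protected remainder, and with $\s{V(M) \cap U_\ell}$ small.

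Concretely: set $U_1' \coloneqq U_1$ (possibly with one reserved edge removed for parity). At stage $i$, given $U_i' \subseteq U_i$ with $\s{U_i'} = (1 \pm o(1)) p_i n$, apply Lemma~\ref{lem:M_in_votex_set} to $\cH[U_i']$ — whose relevant $d$-degrees are $\geq (m_d(k) + 3\eps)\binom{\s{U_i'}}{k-d}$ for all but a few $d$-sets by property~\ref{V2} of the vortex together with $\s{U_i'} = (1\pm o(1))p_i n$ — to get a $(1/(\delta' (p_i n)^{k-1}))$-spread measure on matchings of $\cH[U_i']$ covering all but at most $2\eps \s{U_i'} \leq \eps p_i n$ vertices; this contributes the edges with $q_e = 1/(\delta(p_i n)^{k-1})$. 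Let $A_i$ be the uncovered set, of size $\leq \eps p_i n$; note $A_i \subseteq U_i \subseteq U_i \cup U_{i+1}$. Apply Lemma~\ref{lem:M_cover_down} with $A = A_i$, $B = U_{i+1} \setminus (\text{already-used vertices in } U_{i+1})$ inside $\cH[A_i \cup B]$: property~\ref{V3} of the vortex gives $d_\cH(v; \binom{U_{i+1}\setminus\{v\}}{k-1}) \geq (c - \eps_*)(p_{i+1}n)^{k-1}$ for each $v \in A_i$, and since only $O(\ell \cdot \eps p_\ell n) = o(p_{i+1} n)$ vertices of $U_{i+1}$ have been used so far this degree is barely reduced, so the hypotheses hold with $c$ replaced by $c/2$; this yields a $(1/(\delta'(p_{i+1}n)^{k-1}))$-spread measure on matchings covering $A_i$ with one vertex in $A_i$ per edge — exactly the edges with $q_e = 1/(\delta(p_{i+1}n)^{k-1})$. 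Set $U_{i+1}' \coloneqq U_{i+1} \setminus (U_* \cup \{\text{vertices of } U_{i+1} \text{ used in stages } \le i\})$; crucially $\s{U_{i+1}'} = (1 \pm o(1)) p_{i+1} n$ since only $O(\eps p_{i+1} n)$ vertices get removed. After stage $\ell - 1$, all vertices of $U_1 \cup \dots \cup U_{\ell-1}$ are covered, $U_* \subseteq U_\ell$ is untouched, and the number of covered vertices in $U_\ell$ is $\leq (k-1)\s{A_{\ell-1}} \leq (k-1)\eps p_{\ell-1} n \leq \eps^2 p_\ell n$ (using $p_{\ell-1} \approx 2 p_\ell$ and readjusting constants), giving all three required properties of $M$.

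For the spreadness bound: choosing each stage's matching independently according to its own spread measure, and writing $M = \bigcup_i (M_i^{\mathrm{in}} \cup M_i^{\mathrm{down}})$, for any $S \subseteq \cH$ we factor $\pr{S \subseteq M} \leq \prod_i \pr{S \cap \cH[U_i'] \subseteq M_i^{\mathrm{in}}} \cdot \pr{(S \cap (\text{relevant cover-down edges})) \subseteq M_i^{\mathrm{down}}}$ — this factorization is legitimate because the matching in each sub-instance is chosen independently and a given edge $e$ of $\cH$ can be produced in at most one stage (an edge inside some $U_i$ can only come from $M_i^{\mathrm{in}}$; an edge with $\s{e \cap U_i}=1$, $e \subseteq U_i \cup U_{i+1}$ only from $M_i^{\mathrm{down}}$). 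Each factor is at most $\prod_{e \in (\cdot)} q_e'$ for the corresponding piece, where $q_e' = 1/(\delta'(p_j n)^{k-1}) \le q_e$ with $\delta' \gg \delta$ chosen so the inequality is comfortable. Multiplying gives $\pr{S \subseteq M} \leq \prod_{e \in S} q_e$ (edges with $q_e = 0$, i.e.\ those not of one of the listed forms, simply never appear, and edges inside $U_\ell$ have $q_e = 1$). The main obstacle is bookkeeping: one must verify that after $\ell - 1 = \Theta(\log n)$ rounds the accumulated vertex removals in each level are still only a $o(1)$ fraction, so that the vortex degree conditions~\ref{V2},~\ref{V3} survive in each $\cH[U_i']$ and $\cH[A_i \cup U_{i+1}']$ with room to spare — this works because the leftover at each stage is an $\eps$-fraction of that level and $\eps_* \ll \eps \ll c, 1/k$, but it requires choosing the constant hierarchy ($\delta \ll \delta' \ll \eps_* \ll \eps$) with care and tracking the geometric sum $\sum_i \eps p_i n = \eps n$. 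The parity constraint $\s M \equiv s \pmod 2$ is a minor point handled by the reserved-edge trick, which changes $\s{V(M) \cap U_\ell}$ and the uncovered-in-$U_\ell$ count by only $O(1)$.
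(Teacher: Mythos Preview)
Your approach is essentially the paper's: induct along the vortex, at level $i$ apply Lemma~\ref{lem:M_in_votex_set} to the uncovered part of $U_i$, then Lemma~\ref{lem:M_cover_down} to push the leftover into $U_{i+1}$, and factor the spreadness over the stages. Two points of bookkeeping need tightening. First, applying Lemma~\ref{lem:M_in_votex_set} with $\eps_2 \approx \eps$ leaves $\Theta(\eps\, p_i n)$ vertices uncovered, so the cover-down into $U_\ell$ uses $\Theta(k\eps\, p_\ell n)$ vertices, which is far larger than the required $\eps^2 p_\ell n$; the paper instead takes $\eps_2 = \eps^4/2$ (this is allowed since the vortex gives $d$-degree margin $\approx 4\eps \gg \eps^4$), yielding leftover $\le \eps^4|U_j'|$ and hence $|V(M)\cap U_\ell| \le \eps^2 p_\ell n /2$. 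Second, your remark that ``$O(\ell\cdot \eps p_\ell n)$ vertices of $U_{i+1}$ have been used so far'' is spurious: at the moment you cover down from $U_i$, \emph{no} vertex of $U_{i+1}$ has yet been touched (only $U_*$ must be avoided when $i+1=\ell$), so there is no cumulative sum to track. For parity, the paper's device is the cleanest version of what you suggest: after building $M$ through level $\ell-1$, if $|M|\not\equiv s\pmod 2$ just add a single edge of $\cH[U_\ell\setminus V(M)]$; since $q_e=1$ for $e\subseteq U_\ell$ this costs nothing in the $\mathbf q$-spread bound. Finally, the stage measures are not literally independent (each depends on the outcome of the previous), but since each is $\mathbf q$-spread \emph{conditionally} on the earlier choices, the product bound $\pr{S\subseteq M}\le \prod_{e\in S} q_e$ goes through exactly as you indicate.
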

\begin{proof}
Fix a new constant $\delta_*$ such that $\delta \ll \delta_* \ll \eps_*$.
We prove by induction on $j$ that for each $j$ such that $0 \leq j \leq \ell -1$, there exists a $\mathbf{q}|_{\cH[U_1 \cup \dots \cup  U_{j+1}]}$-spread probability measure $\nu_j$ on the set of matchings $M$ in $\cH[U_1 \cup \dots \cup U_{j+1}]$
which satisfy $U_* \subseteq V(\cH) \setminus V(M) \subseteq U_{j+1} \cup \dots \cup U_\ell$, $\s{V(M) \cap U_{j+1}} \leq \eps^2p_{j+1} n / 2$, and $e \not\subseteq U_{j+1} \cup \dots \cup U_\ell$ for each $e \in M$.

To see how the lemma follows from the existence of such $\nu_{\ell-1}$, note that $\nu_{\ell-1}$ is supported on the set of matchings $M$ of $\cH$ which satisfy $U_* \subseteq V(\cH) \setminus V(M) \subseteq U_\ell$, $|V(M) \cap U_\ell| \leq \eps^2 p_\ell n / 2$, and $e \not\subseteq U_\ell$ for each $e \in M$. 
If $|M| \equiv s \modu{2}$, then let $\nu(\{ M \}) \coloneqq \nu_{\ell-1} ( \{ M \} )$. Otherwise, we choose an arbitrary edge $e_M \in \cH[U_\ell \setminus V(M)]$ and let $\nu(\{ M \cup \{ e_M \} \}) \coloneqq \nu_{\ell-1} ( \{ M \} )$. 
Since $e \not\subseteq U_\ell$ for each $e \in M$ and $q_f = 1$ for each $f \in \cH[U_\ell]$, $\nu$ is a well-defined $\mathbf{q}$-spread probability measure on the set of matchings $N$ in $\cH$ which satisfy $\s{N} \equiv s \modu{2}$, $U_* \subseteq V(\cH) \setminus V(N) \subseteq U_\ell$, $\s{V(N) \cap U_{\ell}} \leq \eps^2p_{\ell} n$.

We define the desired probability measure by randomly constructing a matching $M$ as follows.
For $j = 0$ the statement trivially holds for $M = \varnothing$. Now let $j \geq 1$, and let $\nu_{j-1}$ be a $\mathbf{q}|_{\cH[U_1 \cup \dots \cup U_j]}$-spread probability measure on the set of matchings $M_*$ in $\cH[U_1 \cup \dots \cup U_{j}]$ 
which satisfy $U_* \subseteq V(\cH) \setminus V(M_*) \subseteq U_{j} \cup \dots \cup U_\ell$, $\s{V(M_*) \cap U_{j}} \leq \eps^2p_{j} n / 2$, and $e \not\subseteq U_j \cup \dots \cup U_\ell$ for each $e \in M_*$. 
Let $U'_j$ be the set of vertices in $U_j$ that are not covered by $M_*$. 
Since $\s{U_j} = (1 \pm \eps_*) p_j n$ and by the fact that $M_*$ covers at most $\eps^2 p_j n / 2$ vertices in $U_j$, we have $|U'_j| = (1 \pm 2\eps^2) p_jn$. 

Since $(U_1, \dots, U_\ell)$ is an $\left(\frac{\underline{\mu_d}(k) + 4\eps}{(k-d)!}, c, d, \eps_*, \mathbf{p}\right)$-vortex for $\cH$, for all but $\eps_* (p_j n)^{d}$ many $S \in \binom{U_j}{d}$, we have that $d_{\cH[U_j]}(S) \geq \left(\frac{\underline{\mu_d}(k) + 4\eps}{(k-d)!} - \eps_*\right)(p_j n)^{k-d} \geq \frac{\underline{\mu_d}(k) + 3\eps}{(k-d)!}(p_j n)^{k-d}$. It follows that for all but at most $\eps_* (p_j n)^{d} \leq \frac{\eps_* (p_j n)^{d}}{\s{U_j'}^{d}} |U_j'|^{d} \leq \frac{\eps_*}{(1-2\eps^2)^{d}} |U_j'|^{d} \leq 2 \eps_* |U_j'|^{d}$ many $S \in \binom{U_j'}{d}$, we have 
\begin{align*}
    d_{\cH[U_j']}(S) &\geq \frac{\underline{\mu_d}(k) + 3\eps}{(k-d)!}(p_j n)^{k-d} - 2\eps^2 (p_j n)^{k-d} \geq \frac{\underline{\mu_d}(k) + 2\eps}{(k-d)!}(p_j n)^{k-d} \\ &\geq \frac{\underline{\mu_d}(k) + 2\eps}{(k-d)!} \left(\frac{|U_j'|}{1+2\eps^2}\right)^{k-d} \geq (\underline{\mu_d}(k) + \eps) \binom{|U_j'|}{k-d}. 
\end{align*}

By applying \cref{lem:M_in_votex_set} with $|U'_j|$, $\delta_*$, $2\eps_*$, $\eps^4/2$, $k$, $d$, $\cH[U_j']$ playing the roles of $n$, $\delta$, $\eps_1$, $\eps_2$, $k$, $d$, $\cH$ (noting that each $e \in \cH[U'_j]$ satisfies $q_e = \frac{1}{\delta (p_j n)^{k-1}} \geq \frac{1}{\delta_* |U'_j|^{k-1}}$), there exists a $\mathbf q|_{\cH[U'_j]}$-spread probability measure $\nu_j'$ on the set of matchings $M_j$ in $\cH[U'_j]$ that cover all but at most $\eps^4 |U'_j|$ vertices of $U'_j$. 
Let $A$ be the set of vertices in $U_j'$ not covered by $M_j$, and let $B \coloneqq U_{j+1} \setminus U_*$. 


Let $\cG \coloneqq \cH[A \cup B]$. Note that $(1- 2\eps) p_{j+1} n \leq |V(\cG)| = \s{A} + \s{B} \leq \eps^4 (1 + \eps_*) p_j n + (1+ \eps_*) p_{j+1}n$. 
Using the fact that $p_{j} = 2p_{j+1}$, we have $|V(\cG)| = (1 \pm 2\eps) p_{j+1}n$ and $\s{A} \leq \eps^3 |V(\cG)|$. By \labelcref{V3}, we have for each $v \in A$, $d_\cG(v; \binom{B}{k-1}) \geq (c-\eps_*) (p_{j+1} n)^{k-1} - \s{U_*}\s{U_{j+1}}^{k-2} \geq (c - 3\eps) (p_{j+1} n)^{k-1} \geq \frac{c}{2} |V(\cG)|^{k-1}$. 
By applying \cref{lem:M_cover_down} with $|V(\cG)|$, $\eps^2$, $\delta_*$, $\frac{c}{2}$, $k$, $\cG$ playing the roles of $n$, $\eta$, $\delta$, $c$, $k$, $\cH$ (noting that each $e \in \cG$ with $\s{e \cap A} =1$ satisfies $q_e = \frac{1}{\delta (p_{j+1} n)^{k-1}} \geq \frac{1}{\delta_* |V(\cG)|^{k-1}}$), there exists a $\mathbf q|_{\cG}$-spread probability measure $\nu'_j$ on the set of matchings $M'_j$ in $\cG$ that cover $A$ and $(k-1) \s{A}$ vertices in $B$. 
Note that $M'_j$ covers $(k-1)\s{A} \leq (k-1)\eps^3 |V(\cG)| \leq \eps^2 p_{j+1}n / 2$ vertices in $U_{j+1}$.



It follows that we have randomly constructed the desired matching $M \coloneqq M_* \cup {M}_j \cup M'_j$, and the resulting measure $\nu_{j}$ is $\mathbf q|_{\cH[U_1 \cup \dots \cup U_{j+1}]}$-spread, 
since for every $S\subseteq \cH$, we have
\begin{align*}
    \Prob{S \subseteq  M} &= \Prob{S \cap \cH[U_1 \cup \dots \cup U_j] \subseteq M_*}\ProbCond{S \cap \cH[U'_j] \subseteq {M}_j}{M_*}
    \ProbCond{S \cap \cG \subseteq M'_j}{M_*, {M}_j}\\ 
    & \leq \prod_{e \in S}q_e.
\end{align*}

It is straightforward to check that $U_* \subseteq V(\cH) \setminus V(M) \subseteq U_{j+1} \cup \dots \cup U_\ell$, $\s{V(M) \cap U_{j+1}} \leq \eps^2p_{j+1} n / 2$, and $e \not\subseteq U_{j+1} \cup \dots \cup U_\ell$ for each $e \in M$.
\end{proof}

Since we apply Lemma~\ref{lem:M_given_vortex} to a \textit{random} vortex $(U_1, \dots, U_\ell)$, we can extend the random matching $M$ in a deterministic way without affecting the spreadness of the resulting measure.  It is of course crucial that there is at least one way to extend $M$ to an optimal matching (OM in this definition stands for `optimal matching'), which is captured by the following definition.

\begin{definition}[OM-stability]\label{def:om_stable}
For a $k$-uniform hypergraph $\cH$, a spanning subhypergraph $\cH'$ of $\cH$, and $\eps > 0$, we say that $U \subseteq V(\cH)$ is \emph{$(\cH',\eps)$-OM-stable for $\cH$} if there exists $U_* \subseteq U$ with $\s{U_*} \leq \eps \s{U}$ and $s \in \{0,1\}$ such that for any matching $M$ in $\cH'$ with $\s{M} \equiv s \modu{2}$, $U_* \subseteq V(\cH) \setminus V(M) \subseteq U$, and $\s{V(M) \cap U} \leq \eps \s{U}$, we have that $\cH - V(M)$ contains an optimal matching.
\end{definition}

In this definition, we only consider matchings inside some subhypergraph $\cH' \subseteq \cH$ in order to maintain some divisibility conditions in the critical case in the proof of Theorem~\ref{thm:main_rrs_fknp}.  We also only consider matchings of a certain parity for similar reasons. The set $U_*$ can be viewed as a set of `protected' vertices.
In the non-critical case of Theorem~\ref{thm:main_rrs_fknp}, we will find an `absorbing matching' on these vertices which can absorb a small set of leftover vertices.
This is discussed further in Section~\ref{sec:nonextremal}.




\begin{lemma} \label{lem:spread_OM}
Let $1/n \ll \delta \ll \delta_* \ll \eps_* \ll \eps \ll c, 1/k < 1$ with $k \geq 3$, and $d \in[k-1]$. 
Let $\cH$ be a $k$-uniform hypergraph on $n$ vertices and $\cH'$ a $\left(\frac{\underline{\mu_d}(k) + 4 \eps}{(k-d)!}, c, d, \eps_*\right)$-dense spanning subhypergraph of $\cH$. Let $\ell \coloneqq \left\lceil\frac{k-1}{k} \log_2(n)\right\rceil$, $C_\ell \coloneqq \sum_{i=1}^\ell 2^{-i}$, and $p_\ell \coloneqq \frac{1}{C_\ell 2^{\ell}}$. Suppose that a $p_\ell$-random subset of $V(\cH)$ is $(\cH', \eps)$-OM-stable for $\cH$ with probability at least $\delta_*$. 
Then there exists a probability measure on the set of optimal matchings of $\cH$ that is $\frac{1}{\delta n^{k-1}}$-spread.
\end{lemma}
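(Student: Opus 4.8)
The plan is to combine the vortex existence lemma (Lemma~\ref{lem:vortex_existence}), the vortex matching lemma (Lemma~\ref{lem:M_given_vortex}), and the OM-stability hypothesis, and then to verify spreadness by exploiting the randomness of the vortex. First I would apply Lemma~\ref{lem:vortex_existence} to $\cH'$ (which is $\left(\frac{m_d(k)+4\eps}{(k-d)!}, c, d, \eps_*\right)$-dense) with the partition $(U_1,\dots,U_\ell)$ obtained by assigning each vertex $v$ independently to level $i$ with probability $p_i$; a.a.s.\ this is a $\left(\frac{m_d(k)+4\eps}{(k-d)!}, c, d, 2\eps_*, \mathbf p\right)$-vortex for $\cH'$. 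The crucial observation is that $U_\ell$ is exactly a $p_\ell$-random subset of $V(\cH)$, so by hypothesis it is $(\cH',\eps)$-OM-stable for $\cH$ with probability at least $\delta_*$. Letting $\mathcal G$ be the event that both of these hold, we have $\Prob{\mathcal G}\geq\delta_*-o(1)\geq\delta_*/2$.

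I would then define the measure $\nu$ on optimal matchings of $\cH$ by the following process, run conditionally on $\mathcal G$. Fix $U_*\subseteq U_\ell$ with $\s{U_*}\leq\eps\s{U_\ell}$ and $s\in\{0,1\}$ witnessing the $(\cH',\eps)$-OM-stability of $U_\ell$, and apply Lemma~\ref{lem:M_given_vortex} to $\cH'$ (with $\delta_*$ and $2\eps_*$ in place of its $\delta$ and $\eps_*$, which the hierarchy accommodates, and with this $U_*$ and $s$) to obtain a $\mathbf q$-spread random matching $M$ in $\cH'$ with $\s{M}\equiv s\modu{2}$, $U_*\subseteq V(\cH)\setminus V(M)\subseteq U_\ell$, and $\s{V(M)\cap U_\ell}\leq\eps^2 p_\ell n$, where $q_e=\tfrac{1}{\delta_*(p_in)^{k-1}}$ if $e\subseteq U_i$ with $i<\ell$, $q_e=\tfrac{1}{\delta_*(p_{i+1}n)^{k-1}}$ if $e\subseteq U_i\cup U_{i+1}$ with $\s{e\cap U_i}=1$ and $i<\ell$, $q_e=1$ if $e\subseteq U_\ell$, and $q_e=0$ otherwise. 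Since $\s{U_\ell}=(1\pm2\eps_*)p_\ell n$ we get $\s{V(M)\cap U_\ell}\leq\eps\s{U_\ell}$, so OM-stability provides an optimal matching $M''$ of $\cH-V(M)$; I fix one such $M''$ by an arbitrary rule. Because $V(\cH)\setminus V(M)\subseteq U_\ell$ we have $V(M'')\subseteq U_\ell$, and $\widehat M\coloneqq M\cup M''$ is an optimal matching of $\cH$. Let $\nu$ be the law of $\widehat M$.

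The remaining work is to show $\nu$ is $\tfrac{1}{\delta n^{k-1}}$-spread. Fix a matching $S=\{f_1,\dots,f_s\}\subseteq\cH$ (otherwise the relevant probability is $0$). Conditioning on a realization of the vortex: since $V(M'')\subseteq U_\ell$, any $f_j\not\subseteq U_\ell$ lies in $\widehat M$ only via $M$, so combining the $\mathbf q$-spreadness of $M$ with the fact that $q_e=1$ whenever $e\subseteq U_\ell$ yields $\Prob{S\subseteq\widehat M\mid(U_i)}\leq\prod_{j}q_{f_j}$, where $q_{f_j}$ is determined by which levels the vertices of $f_j$ landed in. Extending the sum over vortex level-assignments to all realizations (which only increases it, as all terms are nonnegative) and using that the $q_{f_j}$ are independent (the $f_j$ being vertex-disjoint) gives $\Prob{S\subseteq\widehat M\wedge\mathcal G}\leq\prod_{j=1}^s\mathbb E[q_{f_j}]$. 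For a single edge $f$, summing over the three configurations that give $q_f\neq 0$,
\[
    \mathbb E[q_f] \;=\; \sum_{i<\ell}\frac{p_i^{k}}{\delta_*(p_i n)^{k-1}} \;+\; \sum_{i<\ell}\frac{k\,p_i p_{i+1}^{k-1}}{\delta_*(p_{i+1} n)^{k-1}} \;+\; p_\ell^{k}
    \;\leq\; \frac{1+k}{\delta_* n^{k-1}} + \frac{2^k}{n^{k-1}} \;\leq\; \frac{C_k}{\delta_* n^{k-1}},
\]
where $C_k\coloneqq 1+k+2^k$; here I used $\sum_i p_i=1$ and $p_\ell\leq 2n^{-(k-1)/k}$, so $p_\ell^k\leq 2^k n^{-(k-1)}$. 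Hence $\Prob{S\subseteq\widehat M\wedge\mathcal G}\leq(C_k/(\delta_* n^{k-1}))^s$, and dividing by $\Prob{\mathcal G}\geq\delta_*/2$ gives $\Prob{S\subseteq\widehat M}\leq\frac{2}{\delta_*}(C_k/(\delta_* n^{k-1}))^s$. Since $\delta\ll\delta_*$ and $C_k$ depends only on $k$, we may assume $\delta\leq\delta_*^2/(2C_k)$, and then $\frac{2}{\delta_*}(C_k/\delta_*)^s\leq\delta^{-s}$ for every $s\geq1$, so $\Prob{S\subseteq\widehat M}\leq(\delta n^{k-1})^{-s}$, as required.

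The hard part is the last paragraph. The measure produced by Lemma~\ref{lem:M_given_vortex} is $\mathbf q$-spread with weights $q_e$ that greatly exceed $1/(\delta n^{k-1})$ on edges in the deep levels of the vortex (where $\s{U_i}$ can be as small as $\sim n^{1/k}$); the whole reason for taking a \emph{random} vortex is that the probability that a fixed edge even lands in level $i$ is $p_i^k\sim 2^{-ki}$, which exactly cancels this inflation down to $O(1/n^{k-1})$, at the cost of a $k$-dependent constant that is absorbed by choosing $\delta$ small in terms of $\delta_*$ and $k$. One also has to be careful that the deterministic absorbing step $M''$ stays inside $U_\ell$ (so its edges carry $\mathbf q$-weight $1$ and contribute nothing to the product), and that conditioning on the constant-probability event $\mathcal G$ only costs the harmless factor $2/\delta_*$.
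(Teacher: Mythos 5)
Your proposal is correct and follows essentially the same route as the paper: a random vortex via Lemma~\ref{lem:vortex_existence}, the $\mathbf q$-spread matching from Lemma~\ref{lem:M_given_vortex} completed to an optimal matching via OM-stability on an event of probability at least $\delta_*/2$, and a spreadness bound obtained by averaging the level-dependent weights over the vortex randomness. The only difference is in the bookkeeping of that last computation: the paper sums over configuration partitions of $\{e_1,\dots,e_s\}$ using the identities $q_i p_i^k = q_i' p_i p_{i+1}^{k-1}$, while you exploit independence of the weights $q_{f_j}$ across vertex-disjoint edges and bound $\mathbb{E}[q_f]$ directly — the two calculations are equivalent.
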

\begin{proof}
For each $i \in [\ell - 1]$, let $p_i \coloneqq \frac{1}{C_\ell 2^{i}}$, and let $\mathbf{p} \coloneqq (p_1, \dots, p_\ell)$.
Independently for each vertex $v \in V(\mathcal{H})$, let $X_v$ be a random variable with values in $[\ell]$ such that $\pr{X_v = i} = p_i$ for each $i \in [\ell]$. For each $i \in [\ell]$, let $U_i \coloneqq \{v \in V(\mathcal{H}) \colon X_v =i\}$. Let $\mathcal{E}_1$ be the event that $(U_1, \dots, U_\ell)$ is a $\left(\frac{\underline{\mu_d}(k) + 4 \eps}{(k-d)!}, c, d, 2\eps_*, \mathbf{p}\right)$-vortex for $\cH'$, and let $\mathcal{E}_2$ be the event that $U_\ell$ is $(\cH',\eps)$-OM-stable for $\cH$. By \cref{lem:vortex_existence}, $\pr{\mathcal{E}_1} \geq 1 - \delta_*/2$, and by assumption, $\pr{\mathcal{E}_2} \geq \delta_*$. Hence, $\pr{\mathcal{E}_1 \cap \mathcal{E}_2} \geq \delta_*/2$.

Suppose that the outcome of $X_v, v \in V(\cH)$ is such that $\mathcal{E}_1 \cap \mathcal{E}_2$ holds. Since $U_\ell$ is $(\cH',\eps)$-OM-stable for $\cH$, there exists $U_* \subseteq U_\ell$ with $\s{U_*} \leq \eps \s{U_\ell}$ and $s \in \{0,1\}$ such that for any matching $M$ of $\cH'$ with $\s{M} = s \modu{2}$, $U_* \subseteq V(\cH) \setminus V(M) \subseteq U_\ell$, and $\s{V(M) \cap U} \leq \eps \s{U_\ell}$, we have that $\cH - V(M)$ contains an optimal matching. By \cref{lem:M_given_vortex} with $n$, $\delta_*$, $2\eps_*$, $\eps$, $c$, $k$, $d$, $\cH'$, $U_*$ playing the roles of $n$, $\delta$, $\eps_*$, $\eps$, $c$, $k$, $d$, $\cH$, $U_*$, there is a 
$\mathbf q$-spread probability measure $\nu_*$ on the set of matchings $M_*$ in $\cH'$ which satisfy $|M_*| \equiv s \modu{2}$, $U_* \subseteq V(\cH) \setminus V(M_*) \subseteq U_\ell$, and $|V(M_*) \cap U_\ell| \leq \eps^2 p_\ell n$, where $\mathbf q$ is as defined in \cref{lem:M_given_vortex}. 
Since $\mathcal{E}_2$ holds, we can complete the matching $M_*$ to an optimal matching $M$ of $\cH$.
Thus, conditional on the event $\mathcal{E}_1 \cap \mathcal{E}_2$, this procedure defines a probability measure on the set of optimal matchings $M$ in $\cH$.
(For each optimal matching $M$, the probability of $M$ appearing is given by the probability that this procedure outputs $M$. Note that for fixed $M$, there may be several different ways of arriving at output $M$ via this procedure.)

We claim that the resulting measure is $2q / \delta_*$-spread, where $q \coloneqq 4 / (\delta_* n^{k-1})$.
To that end, let $s \geq 1$, and let $e_1, \dots, e_s$ be distinct edges of $\cH$. We show that $\pr{e_1, \dots, e_s \in M} \leq 2q^s / \delta_* \leq (2q / \delta_*)^s$. If the edges $e_1, \dots, e_s$ do not form a matching in $\cH$, then clearly $\pr{e_1, \dots, e_s \in M} = 0$ as $M$ is a matching, so we may assume that the edges $e_1, \dots, e_s$ form a matching in $\cH$.  Let $\cP$ denote the set of partitions $(S_1, S'_1, \dots, S_{\ell - 1}, S'_{\ell - 1}, S_\ell)$ of $\{e_1, \dots, e_s\}$ into $2\ell - 1$ parts.  For each $P = (S_1, S'_1, \dots, S_{\ell - 1}, S'_{\ell - 1}, S_\ell) \in \cP$, let $\mathcal E_P$ be the event that
\begin{itemize}
    \item $e \subseteq U_i$ for all $i \in [\ell]$ and $e \in S_i$ and
    \item $|e \cap U_i| = 1$ and $|e \cap U_{i+1}| = k - 1$ for all $i \in [\ell - 1]$ and $e \in S_i'$.
\end{itemize}
Now
\begin{equation*}
    \Prob{e_1, \dots, e_s \in M} = \sum_{P \in \cP}\ProbCond{e_1, \dots, e_s \in M}{\curlyE_P}\ProbCond{\curlyE_P}{\curlyE_1 \cap \curlyE_2}.
\end{equation*}
Since $\{e_1, \dots, e_s\}$ is a matching, for every $P = (S_1, S'_1, \dots, S_{\ell - 1}, S'_{\ell - 1}, S_\ell) \in \cP$, we have
\begin{equation*}
   \ProbCond{\curlyE_P}{\curlyE_1 \cap \curlyE_2} \leq \frac{\Prob{\curlyE_P}}{\Prob{\curlyE_1 \cap \curlyE_2}} \leq \frac{2}{\delta_*} \prod_{i=1}^{\ell} p_i^{k|S_i|} \prod_{i=1}^{\ell-1} \left(p_i p_{i+1}^{k-1}\right)^{|S'_i|},
\end{equation*}
and by \cref{lem:M_given_vortex},
\begin{equation*}
    \ProbCond{e_1, \dots, e_s \in M}{\curlyE_P} \leq \prod_{i=1}^\ell q_i^{|S_i|} \prod_{i=1}^{\ell-1} {{q_i}'}^{|S'_i|},
\end{equation*}
where $q_i \coloneqq 1 / (\delta_* (p_i n)^{k - 1})$ and 
${q_i}' \coloneqq 1 / (\delta_* (p_{i+1} n)^{k - 1})$ for $i \in [\ell - 1]$ and $q_\ell \coloneqq 1$.
Since $q = 4 / (\delta_* n^{k-1})$, for all $i \in [\ell - 1]$,
\begin{equation*}
    q_i p_i^k = q'_i p_i p_{i+1}^{k - 1} = \frac{q p_i}{4},
\end{equation*}
and since $1 / n \ll \delta_* \ll 1/k$,\COMMENT{Note that $p_\ell = 1 / (C_\ell 2^\ell) = 1 / (C_\ell 2^{\log_2 n^{(k-1)/k} \pm 1}) \leq 2 / (C_\ell n^{(k-1)/k})$ and $C_\ell = 1 - 2^{-\ell} \in [0.99, 1]$.}
\begin{equation*}
    q_\ell p_\ell^k = p_\ell^k \leq \left ( \frac{2}{C_\ell n^{\frac{k-1}{k}}} \right)^k \leq \frac{2}{\delta_* n^{k-1}} = \frac{q}{2}.
\end{equation*}
Therefore, combining the five equations above, we have
\begin{align*}
    \Prob{e_1, \dots, e_s \in M} &\leq \frac{2}{\delta_*}\sum_{(S_1, S'_1, \dots, S_{\ell - 1}, S'_{\ell - 1}, S_\ell) \in \cP} \left(\frac{q}{2}\right)^{|S_\ell|}\prod_{i=1}^{\ell - 1} \left(\frac{qp_i}{4}\right)^{|S_i|} \left(\frac{qp_i}{4}\right)^{|S'_i|} \\
    &= \frac{2}{\delta_*} q^{s}\left(\frac{1}{2} + \frac{p_1}{4} + \frac{p_1}{4} + \cdots + \frac{p_{\ell-1}}{4} + \frac{p_{\ell-1}}{4}\right)^s \leq \frac{2}{\delta_*} q^{s},
\end{align*}
so our measure is $2q / \delta_*$-spread, as claimed.  Since $\delta \ll \delta_*$, the measure is also $1 / (\delta n^{k - 1})$-spread, as desired.
\end{proof}

\section{OM-stability}\label{section:cases}

In this section, we prove Theorem~\ref{thm:main_general_fknp}, and subject to some lemmas proved in later sections, we also prove Theorem~\ref{thm:main_rrs_fknp}. 
Lemma~\ref{lem:spread_OM} essentially reduces these proofs to the problem of proving the hypergraphs under consideration are OM-stable.

\subsection{\texorpdfstring{Proof of Theorem~\ref{thm:main_general_fknp}}{Proof of Theorem~\ref{thm:main_general_fknp}}}

Together with~\cref{lem:spread_OM}, the next lemma implies spreadness of optimal matchings in the case when we have minimum $d$-degree at least $(\overline{\mu_d}^{(s)}(k) + o(1)) \binom{n-d}{k-d}$.

\begin{lemma}\label{lem:d_deg_OM-stable}
    Let $1/n \ll \eps \ll \gamma \ll 1/k \leq 1/3$ and $d \in [k-1]$. Let $\cH$ be a $k$-uniform hypergraph on $n$ vertices with $\delta_d(\cH) \geq \left(\overline{\mu_d}^{(s)}(k) + \gamma\right) \binom{n-d}{k-d}$, where $n \equiv s\:({\rm mod}\:k)$ for $0 \leq s \leq k-1$. 
    Let $\ell \coloneqq \left\lceil\frac{k-1}{k} \log_2(n)\right\rceil$, $C_\ell \coloneqq \sum_{i=1}^\ell 2^{-i}$, and $p_\ell \coloneqq \frac{1}{C_\ell 2^{\ell}}$. Then \aas a $p_\ell$-random subset of $V(\cH)$ is $(\cH, \eps)$-OM-stable for $\cH$.
\end{lemma}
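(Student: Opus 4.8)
The plan is to show that a.a.s.\ the random set $U$ inherits a $d$-degree condition strong enough that \emph{every} subset $W \subseteq U$ with $|W| \geq (1-\eps)|U|$ and $|W| \equiv s \pmod{k}$ already satisfies $\delta_d(\cH[W]) \geq m_d(k,|W|)$, and then to witness OM-stability by taking $U_* := \varnothing$ and $s' := 0$ in \cref{def:om_stable} (the freedom to protect vertices or to restrict the parity is not needed in this dense regime). Introduce an auxiliary constant $\eps_0$ with $1/n \ll \eps_0 \ll \eps$, and note that $p_\ell n = \Theta(n^{1/k}) \to \infty$. I would condition on two a.a.s.\ events. First, (A): $|U| = (1 \pm \eps_0)p_\ell n$, which follows from the Chernoff bound since $|U|$ is a sum of $n$ independent $\mathrm{Bernoulli}(p_\ell)$ variables with mean $p_\ell n \to \infty$. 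Second, (B): for every $d$-set $S \in \binom{V(\cH)}{d}$, the number of edges $e \in \cH$ with $S \subseteq e$ and $e \setminus S \subseteq U$ is at least $(1-\eps_0)\,d_\cH(S)\,p_\ell^{k-d}$. For~(B) I would apply \cref{lemma:prob_lemma}\ref{typical_i} to the family $\mathcal F_S := \{\, e \setminus S : e \in \cH,\ S \subseteq e \,\} \subseteq \binom{V(\cH) \setminus S}{k-d}$, with ground set $V(\cH) \setminus S$ of size $n-d$, exponent $k-d$, probability $p_\ell$, error $\eps_0$, and some fixed $\beta \in (0,1/k)$; the hypothesis $|\mathcal F_S| \geq \eps_0 (n-d)^{k-d}((n-d)p_\ell)^{-1/2}$ holds with room to spare because $|\mathcal F_S| = d_\cH(S) \geq \gamma \binom{n-d}{k-d} = \Omega(n^{k-d})$ while the right-hand side is $\Theta(n^{k-d-1/(2k)})$, and then I would union bound over the at most $n^d$ choices of $S$.

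Now assume (A) and (B) hold, and let $M$ be any matching of $\cH$ with $|M|$ even, $V(\cH) \setminus V(M) \subseteq U$, and $|V(M) \cap U| \leq \eps|U|$; set $W := V(\cH) \setminus V(M) \subseteq U$. Then $|W| = |U| - |V(M) \cap U| \geq (1-\eps)(1-\eps_0)p_\ell n \to \infty$ and $|W| \equiv n \equiv s \pmod{k}$. Fix a $d$-set $S \subseteq W$. Every edge of $\cH$ containing $S$ that lies inside $U$ but not inside $W$ must use one of the $|U \setminus W| = |V(M) \cap U| \leq \eps|U|$ vertices of $U \setminus W$, so (B) gives
\[
  d_{\cH[W]}(S) \;\geq\; (1-\eps_0)\,d_\cH(S)\,p_\ell^{k-d} \;-\; \eps|U|\binom{|U|}{k-d-1}.
\]
Combining this with $d_\cH(S) \geq (\mu_d^{(s)}(k)+\gamma)\binom{n-d}{k-d}$, the trivial bound $\mu_d^{(s)}(k) \leq 1$, event~(A) (which gives $p_\ell n \geq (1-\eps_0)|U| \geq (1-\eps_0)|W|$ since $W \subseteq U$, and $|U| \leq (1+2\eps)|W|$), and the routine estimates $\binom{n-d}{k-d}p_\ell^{k-d} \geq (1-O(\eps_0))\binom{|W|-d}{k-d}$ and $|U|\binom{|U|}{k-d-1} \leq O(1) \cdot \binom{|W|-d}{k-d}$ (implied constants depending only on $k$), I obtain $d_{\cH[W]}(S) \geq \big(\mu_d^{(s)}(k) + \gamma - C(\eps_0+\eps)\big)\binom{|W|-d}{k-d}$ for some $C = C(k)$. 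Since $\eps_0 \ll \eps \ll \gamma$, the bracket is at least $\mu_d^{(s)}(k) + \gamma/2$; as $S$ was arbitrary, $\delta_d(\cH[W]) \geq (\mu_d^{(s)}(k)+\gamma/2)\binom{|W|-d}{k-d}$.

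Finally, by the definition of $\mu_d^{(s)}(k)$ as the $\limsup$ of $m_d(k,m)/\binom{m-d}{k-d}$ over integers $m \equiv s \pmod{k}$, there is $n_1 = n_1(k,d,\gamma)$ with $m_d(k,m) \leq (\mu_d^{(s)}(k)+\gamma/2)\binom{m-d}{k-d}$ for all $m \geq n_1$ with $m \equiv s \pmod{k}$. Since $|W| \to \infty$, for $n$ large enough we have $|W| \geq n_1$, whence $\delta_d(\cH[W]) \geq m_d(k,|W|)$ and therefore $\cH - V(M) = \cH[W]$ contains an optimal matching. As $M$ was an arbitrary matching meeting the stated conditions, $U$ is $(\cH,\eps)$-OM-stable for $\cH$ whenever (A) and (B) hold, and this happens a.a.s.

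I do not expect a serious conceptual obstacle; the substance lies in the two ``propagation'' steps that push the degree condition from $\cH$ all the way down to the $\Theta(n^{1/k})$-vertex hypergraph $\cH[W]$. The passage from $\cH$ to $\cH[U]$ is probabilistic and relies on the Kim--Vu-based \cref{lemma:prob_lemma}, whereas the passage from $\cH[U]$ to $\cH[W]$ must be carried out deterministically from the degree bound on $\cH[U]$, since there are exponentially many candidates for $W$ and one cannot union bound over them. The one point that needs care is the accounting of the slack $\gamma$: it has to absorb the $(1\pm\eps_0)$ concentration errors and the discrepancies among $n$, $|U|$, and $|W|$, and still leave a fresh $\gamma/2$ for invoking the $\limsup$ definition of $\mu_d^{(s)}(k)$ — which is legitimate precisely because $|W| \to \infty$.
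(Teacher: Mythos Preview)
Your proposal is correct and follows essentially the same approach as the paper: both proofs use the Chernoff bound for $|U|$, apply \cref{lemma:prob_lemma}\ref{typical_i} with a union bound over all $d$-sets to transfer the $d$-degree condition to $\cH[U]$, then deterministically verify that deleting the at most $\eps|U|$ vertices of $V(M)\cap U$ preserves enough of the degree condition, and finally invoke the definition of $\mu_d^{(s)}(k)$ to conclude $\delta_d(\cH[W]) \geq m_d(k,|W|)$. The only cosmetic differences are that you introduce an explicit auxiliary constant $\eps_0$ and fix the parity $s'=0$, whereas the paper leaves the parity unspecified (its argument works for either choice).
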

\begin{proof}
By the definition of $\overline{\mu_d}^{(s)}(k)$, there exists $n_0 \in \mathbb{N}$ such that $m_d(k,n') < (\overline{\mu_d}^{(s)}(k) + \gamma/4) \binom{n'-d}{k-d}$ for all $n' \in k\mathbb{N} + s$ with $n' \geq n_0$,
and 
we may assume that $n$ is sufficiently larger than $n_0$ so that $n^{1/k} / 8 \geq n_0$, which implies $p_\ell n/2 \geq n_0$.
Let $U$ be a $p_\ell$-random subset of $V(\cH)$. 
Let $\mathcal{E}$ be the event that $\s{U} = (1\pm \eps)p_\ell n$ and $\delta_{d}(\cH[U]) \geq \frac{\overline{\mu_d}^{(s)}(k) + \gamma/3}{(k-d)!} (p_\ell n)^{k-d}$.
We show that $\mathcal{E}$ occurs a.a.s.
Note that by a Chernoff bound, we have that
\[
\pr{\s{U} \neq (1 \pm \eps)p_\ell n} \leq 2 \exp\left(-\frac{\eps^2}{3}p_\ell n\right) \leq \exp(-\Omega(n^{1/k})).
\]
Note that for each $S \in \binom{V(\cH)}{d}$, we have $d_\cH(S) \geq (\overline{\mu_d}^{(s)}(k) + \gamma) \binom{n-d}{k-d} \geq \frac{\overline{\mu_d}^{(s)}(k) + \gamma/2}{(k-d)!} n^{k-d}.$
By \cref{lemma:prob_lemma} \ref{typical_i} and a union bound, with probability at least $1-\exp(-n^{1/(11k^2)})$, we have 
\[
\delta_d(\cH[U]) \geq \frac{\overline{\mu_d}^{(s)}(k) + \gamma/3}{(k-d)!} (p_\ell n)^{k-d}.
\]
Hence, $\mathcal{E}$ occurs a.a.s. We show that in this case $U$ is $(\cH, \eps)$-OM-stable for $\cH$. 
Let $M$ be a matching in $\cH$ such that $|V(M) \cap U| \leq \eps |U|$ and $V(\cH) \setminus V(M) \subseteq U$. Let $U' \coloneqq V(\cH) \setminus V(M)$.
Note that
\begin{equation*}
\delta_d(\cH[U']) \geq \frac{\overline{\mu_d}^{(s)}(k) + \gamma/3}{(k-d)!} (p_\ell n)^{k-d} - \eps |U|^{k-d} \geq \frac{\overline{\mu_d}^{(s)}(k) + \gamma/4}{(k-d)!} |U'|^{k-d} \geq \left(\overline{\mu_d}^{(s)}(k) + \frac{\gamma}{4}\right) \binom{|U'|-d}{k-d},
\end{equation*}
and
since $|U'| \geq p_\ell n / 2 \geq n_0$
and $|U'| \equiv n \modu{k}$, we have $\delta_d(\cH[U']) \geq (\overline{\mu_d}^{(s)}(k) + \gamma/4) \binom{|U'|-d}{k-d} \geq m_d(k,|U'|)$.  Therefore, it follows from the definition of $m_d(k, |U'|)$ that $\cH[U'] = \cH - V(M)$ contains an optimal matching, as desired.
\end{proof}

We are now ready to prove \cref{thm:main_general_fknp}.
\begin{proof}[Proof of Theorem~\ref{thm:main_general_fknp}]
Let $1/n \ll \delta \ll \eps_* \ll \eps \ll \gamma, 1/k$ with $\gamma \in (0,1)$ and $k \geq 3$. 
Let $0 \leq s \leq k-1$ be an integer such that $n \in k\mathbb{N} + s$.
Note that $\delta_d(\cH) \geq (\overline{\mu_d}^{(s)}(k) + \gamma) \binom{n-d}{k-d} \geq \frac{\overline{\mu_d}^{(s)}(k) + \gamma/2}{(k-d)!} n^{k-d}$. Moreover, we have
\begin{align*}
    \delta_1(\cH) \geq \frac{1}{\binom{k-1}{d-1}}\binom{n-1}{d-1}\delta_d(\cH) \geq \frac{\overline{\mu_d}^{(s)}(k) + \gamma/2}{2(k-1)!} n^{k-1}.
\end{align*}
Thus, since $\eps \ll \gamma$, $\cH$ is $\left(\frac{\overline{\mu_d}^{(s)}(k) + 4 \eps}{(k-d)!}, \frac{\overline{\mu_d}^{(s)}(k) + \gamma/2}{2(k-1)!}, d, \eps_*\right)$-dense. By \cref{lem:spread_OM,lem:d_deg_OM-stable}, there exists a probability measure on the set of optimal matchings of $\cH$ which is $\frac{1}{\delta n^{k-1}}$-spread, as desired.
\end{proof}



\subsection{\texorpdfstring{Proof of Theorem~\ref{thm:main_rrs_fknp}}{Proof of  Theorem~\ref{thm:main_rrs_fknp}}}

Now we briefly describe the following critical example mentioned in~\cite[Section 3]{Rodl2009}. Note that the critical example for odd $k$ was introduced in~\cite{KO2006}.

\begin{definition}[$\cH^0(k,n)$]\label{def:critical}
Let $k,n \geq 2$ be positive integers such that $n$ is divisible by $k$.
Let $\cH^0(k,n)$ be a $k$-uniform $n$-vertex hypergraph with an ordered partition $(A,B)$ of $V(\cH^0(k,n))$ such that the following holds.
\begin{itemize}
    \item If $k$ is odd, then $\s{A}$ is the unique odd integer in 
        $\{ \frac{n}{2} -1, \frac{n}{2} - \frac{1}{2}, \frac{n}{2}, \frac{n}{2} + \frac{1}{2} \}$
        and $E(\cH^0(k,n))$ is the collection of all subsets of size $k$ in $V(\cH^0(k,n)) = A \cup B$ which intersect $A$ in an even number of vertices.
    \item Otherwise if $k$ is even, then
        \[
        \s{A} = \begin{cases} \frac{n}{2}, &\text{if $\frac{n}{k}$ is odd and $\frac{n}{2}$ is even,} \\ \frac{n}{2} -1, &\text{ otherwise (thus $n/k \equiv n/2 \modu{2})$,} \end{cases}
        \]
        and $E(\cH^0(k,n))$ is the collection of all subsets of size $k$ in $V(\cH^0(k,n)) = A \cup B$ which intersect $A$ in an odd number of vertices.
\end{itemize}
\end{definition}

Let $\delta^0 (k,n) \coloneqq \delta_{k-1}(\cH^0(k,n))$. If $k$ is odd, then
\begin{align*}
    \delta^0(k,n) =  
    \begin{cases}
        n/2 + 1 - k & \text{for } n \equiv 0,2\:({\rm mod}\:4)\\
        n/2 + 1/2 - k & \text{for } n \equiv 1\:({\rm mod}\:4)\\
        n/2 + 3/2 - k & \text{for } n \equiv 3\:({\rm mod}\:4).
    \end{cases}
\end{align*}

Otherwise if $k$ is even, then
\begin{align*}
    \delta^0(k,n) = 
    \begin{cases}
        n/2 + 1 - k & \text{if $n/k$ is even}\\
        n/2 + 1 - k & \text{if $n/k$ is odd and $k/2$ is odd}\\
        n/2 + 2 - k & \text{if $n/k$ is odd and $k/2$ is even.}
    \end{cases}
\end{align*}

Note that $\cH^0(k,n)$ does not contain a perfect matching (for example, see~\cite[Section 3]{Rodl2009}), so $m_{k-1}(k,n) \geq \delta^0(k,n) + 1$ if $k \mid n$. In fact, R\"{o}dl, Ruci\'nski, and Szemer\'edi~\cite{Rodl2009} showed that $m_{k-1}(k,n) =  \delta^0(k,n) + 1$ when $k \geq 3$, $k \mid n$, and $n$ is sufficiently large.

We may also use the following definition from~\cite[Definition 3.3]{Rodl2009}. 
\begin{definition}[$\eps$-containment]\label{def:epscontain}
For any $\eps \in (0,1)$, an $n$-vertex $k$-uniform hypergraph $\cH$ \emph{$\eps$-contains} another $n$-vertex $k$-uniform hypergraph $\cG$ (or $\cG \subseteq_{\eps} \cH$) if there exists an isomorphic copy $\cH'$ of $\cH$ such that $V(\cH') = V(\cG)$ and $|\cG \setminus \cH'| \leq \eps n^k$.
\end{definition}

In the proof of Theorem~\ref{thm:main_rrs_fknp}, we must consider two cases according to whether $\cH$ is close to being critical.
The following two lemmas give that \aas a small random subset of vertices is OM-stable in both cases.

\begin{lemma}\label{lem:non-extremal_is_OM_stable}
Let $1/n \ll \eps \ll 1/k \leq 1/3$ such that $k \mid n$.
Let $\cH$ be a $k$-uniform $n$-vertex hypergraph with $\delta_{k-1}(\cH) \geq (1/2 - 1/\log n)n$ such that $\cH$ $\eps$-contains neither $\cH^0(k,n)$ nor $\overline{\cH^0(k,n)}$. Let $\ell \coloneqq \left\lceil\frac{k-1}{k} \log_2(n)\right\rceil$, $C_\ell \coloneqq \sum_{i=1}^\ell 2^{-i}$, and $p_\ell \coloneqq \frac{1}{C_\ell 2^{\ell}}$. Let $U$ be a $p_\ell$-random subset of $V(\cH)$. Then \aas $U$ is $(\cH, \eps)$-OM-stable for $\cH$.
\end{lemma}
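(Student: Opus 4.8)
The plan is to show that a.a.s.\ the random subset $U$ has two properties: (1) $|U| = (1 \pm \eps^3) p_\ell n$ and the minimum codegree of $\cH[U]$ is robustly large, namely $\delta_{k-1}(\cH[U]) \geq (1/2 - 2/\log|U|)|U|$ even after deleting $\eps|U|$ further vertices; and (2) $\cH[U]$ $\eps$-contains neither $\cH^0(k,|U|)$ nor $\overline{\cH^0(k,|U|)}$, and moreover this non-containment is inherited by $\cH[U']$ for every $U' \subseteq U$ with $|U \setminus U'| \leq \eps|U|$. Given these two properties, I take $U_* \coloneqq \varnothing$ and $s \coloneqq 0$ in Definition~\ref{def:om_stable}: for any matching $M$ in $\cH$ with $V(\cH)\setminus V(M) \subseteq U$ and $|V(M)\cap U| \leq \eps|U|$, the set $U' \coloneqq V(\cH)\setminus V(M)$ satisfies $k \mid |U'|$, $\delta_{k-1}(\cH[U']) \geq (1/2 - 1/\log|U'|)|U'|$ and $\cH[U']$ $\eps$-contains neither critical hypergraph, so by the as-yet-unstated ``robust R\"odl--Ruci\'nski--Szemer\'edi in the non-extremal case'' result (the analogue of Theorem~\ref{thm:main_rrs_fknp} restricted to hypergraphs far from critical, proved in a later section) $\cH[U'] = \cH - V(M)$ has a perfect matching. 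Thus $U$ is $(\cH,\eps)$-OM-stable.

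For property (1): the codegree bound follows from Lemma~\ref{lemma:prob_lemma}\ref{typical_i} applied to the neighbourhood family of each $(k-1)$-set, together with a union bound over all $(k-1)$-sets $S\subseteq V(\cH)$ (there are at most $n^{k-1}$ of them, and the failure probability per set is $\exp(-n^{\beta/(10(k-1))})$ with $\beta$ chosen so that $p_\ell n = \Theta(n^{1/k})$ comfortably exceeds $\eps n^{\beta}$); here one uses that $d_\cH(S) \geq (1/2 - 1/\log n)n \gg \eps n (n p_\ell)^{-1/2}$ so case \ref{typical_i} applies, giving $d_{\cH[U]}(S\cap U) = (1\pm\eps^3)p_\ell^{k-1} d_\cH(S)$ whenever $S \subseteq U$. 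Since deleting $\eps|U|$ vertices from $U$ removes at most $\eps|U|\cdot|U|^{k-2}$ edges through any fixed $(k-1)$-set, and $(1 - \eps^3)p_\ell^{k-1}(1/2 - 1/\log n)n - \eps|U|^{k-2}\cdot|U| \geq (1/2 - 2/\log|U|)|U|^{k-1}/(k-1)!$ for large $n$, the robust codegree bound follows.

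For property (2), which I expect to be the main obstacle: the subtlety is that $\eps$-containment is not a monotone or simply ``local'' property, so I need a stability/robustness statement saying that being $\eps$-far from $\cH^0$ and $\overline{\cH^0}$ is preserved under passing to a random induced subhypergraph and then deleting few vertices. The natural route is to first establish a quantitative strengthening of the hypothesis: since $\cH$ is $\eps$-far from both critical hypergraphs, I claim $\cH$ is in fact $c$-far for some $c = c(\eps,k) > \eps$ with the additional feature that this is ``witnessed locally'' --- e.g.\ for every bipartition $(A,B)$ of $V(\cH)$ the number of edges of $\cH$ whose intersection with $A$ has the ``wrong'' parity is $\geq c n^k$, and likewise with parities swapped. (Such a reformulation follows because a copy of $\cH^0(k,n)$ is determined by a balanced-ish bipartition and a choice of parity.) This local witness survives sampling: by Lemma~\ref{lemma:prob_lemma}\ref{typical_i} applied to the relevant edge-families for each of the $\leq 2^n$ bipartitions --- here a union bound over $2^n$ bipartitions is affordable since the per-bipartition failure probability is $\exp(-n^{\beta/(10k)})$, which beats $2^{-n}$ only if we instead fix the witness structure of $\cH$ first and sample, so I would instead count, for the fixed optimal witness bipartition of each potential embedding, and use the fact that any bipartition of $U$ extends to $O(1)$ ``types'' of bipartitions of $V(\cH)$; more carefully, one shows directly that if $\cH[U']$ were $\eps$-close to $\cH^0(k,|U'|)$ via bipartition $(A',B')$ of $U'$, then extending $(A',B')$ arbitrarily to a bipartition $(A,B)$ of $V(\cH)$ and using the density transfer of Lemma~\ref{lemma:prob_lemma} shows $\cH$ is $O(\eps)$-close to $\cH^0(k,n)$, a contradiction once the constants in the hierarchy $1/n \ll \eps \ll 1/k$ are chosen appropriately). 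Combining property (1) and property (2) with the non-extremal robust RRS result completes the proof.
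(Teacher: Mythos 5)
There is a genuine gap, and it sits exactly where you set $U_* \coloneqq \varnothing$. After deleting the vertices of an arbitrary admissible matching $M$, the leftover set $U' = V(\cH)\setminus V(M)$ has lost up to $\eps|U|$ vertices of $U$, and each deleted vertex can reduce the codegree of a fixed $(k-1)$-set inside $U$ by one; hence all you can guarantee is $\delta_{k-1}(\cH[U']) \geq (1/2-\eps-o(1))|U'|$, a \emph{linear} deficit below $|U'|/2$. Your property (1), asserting $\delta_{k-1}(\cH[U'])\geq (1/2-2/\log|U'|)|U'|$, is therefore unattainable (the displayed inequality also miscomputes the scaling: for a fixed $(k-1)$-set the restricted codegree concentrates around $p_\ell\, d_\cH(S)$, not $p_\ell^{k-1} d_\cH(S)$, and removing $\eps|U|$ vertices costs up to $\eps|U|$ codegree, not a lower-order term). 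Consequently no ``robust R\"odl--Ruci\'nski--Szemer\'edi in the non-extremal case'' theorem with codegree hypothesis $(1/2-o(1))|U'|$ can be invoked on $\cH[U']$, so the final step of your argument does not go through. Your property (2) is also not established: as you note yourself, a union bound over all $2^n$ bipartitions is unaffordable, and the proposed fix (extending a bipartition of $U'$ to $V(\cH)$ and ``transferring density back'') is not a proof --- closeness of $\cH[U']$ to a critical hypergraph on $|U'|$ vertices does not obviously force closeness of $\cH$ on all of $V(\cH)$ without a concentration statement quantified over all bipartitions.

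The paper's proof avoids both issues by using the protected set $U_*$ for absorption, which is precisely what the definition of OM-stability is designed to permit. From the non-extremality hypothesis it imports Lemma~\ref{lem:no_of_matchings} (many $S$-absorbing $k$- or $(k+1)$-matchings for \emph{every} $k$-set $S$), transfers this to $\cH[U]$ via the Kim--Vu-based Corollary~\ref{cor:M_count_in_U} with a union bound over only $\binom{|U|}{k}$ sets, and builds an absorbing matching $M$ of size $O(\log^4|U|)$ via Lemma~\ref{lem:absorbing_matching}; it then sets $U_*\coloneqq V(M)$. After removing an admissible matching $\widetilde M$ and one extra vertex $u$, the remaining set $U''$ has $|U''|\equiv k-1 \pmod{k}$ and codegree at least $|U''|/k$ --- a bound that \emph{does} survive the linear $\eps|U|$ loss --- so Han's theorem (Theorem~\ref{thm:Han_OM}) yields a matching missing exactly $k-1$ vertices, and the absorber swallows those together with $u$. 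In short, the far-from-extremal hypothesis is never re-verified on induced subhypergraphs; only its consequence (abundance of absorbers) is transferred to $U$, and the codegree shortfall is compensated by absorption rather than by a near-$1/2$ codegree theorem.
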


\begin{lemma}\label{lem:extremal_OM}
Let $1/n \ll \eps \ll \eta \ll 1/k \leq 1/3$ such that $k \mid n$.
Let $\cH$ be a $k$-uniform $n$-vertex hypergraph $\cH$ with $\delta_{k-1}(\cH) \geq m_{k-1}(k,n) = \delta^0(k,n) + 1$ such that $\cH$ $\eps$-contains either $\cH^0(k,n)$ or $\overline{\cH^0(k,n)}$.
Let $\ell \coloneqq \lceil \frac{k-1}{k} \log_2 n \rceil$, $C_\ell \coloneqq \sum_{i=1}^{\ell} 2^{-i}$, and $p_\ell \coloneqq 1/(C_\ell 2^\ell)$.
There are at least $\eps n^{k-1}$ choices of an edge $e^* \in \cH$ such that for each of the choices of $e^*$, there exists a spanning subhypergraph $\cH'$ of $\cH - V(e^*)$ such that 
\begin{enumerate}[label = {$({\rm O}\arabic*)$}, leftmargin=\widthof{O100000}]
    \item\label{O1} $\cH'$ is $(1/2 - \eta ,\: \frac{0.15}{3^{k-1}(k-1)!}, \: k - 1 ,\: \eta)$-dense, and
    
    \item\label{O2} \aas a $p_\ell$-random subset of $V(\cH) - V(e^*)$ is $(\cH', \eta)$-OM-stable for $\cH - V(e^*)$.
\end{enumerate}
\end{lemma}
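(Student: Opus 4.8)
The plan is to exploit the structure of the critical hypergraph $\cH^0(k,n)$ (or its complement). By symmetry, assume $\cH$ $\eps$-contains $\cH^0(k,n)$, so there is a partition $(A,B)$ of $V(\cH)$ with $|A| \approx n/2$ such that all but at most $\eps n^k$ edges of $\cH$ meet $A$ in a number of vertices of the correct parity (even if $k$ is odd, odd if $k$ is even — call this the \emph{good parity}). The key combinatorial fact, used by R\"odl, Ruci\'nski, and Szemer\'edi~\cite{Rodl2009}, is that a perfect matching in a hypergraph all of whose edges have the good parity forces a congruence on $|A|$: summing $|e\cap A|$ over a perfect matching gives $|A|$, so $|A|$ must be achievable as a sum of $n/k$ terms each of the good parity, which fails precisely for the value $\delta^0(k,n)$ realizes. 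The idea is therefore to delete one carefully chosen \emph{atypical edge} $e^*$ — one that meets $A$ in a number of vertices of the \emph{wrong} parity — so that the leftover hypergraph on $V(\cH)\setminus V(e^*)$ no longer suffers from this parity obstruction; more precisely, after deleting $e^*$ we can pass to the subhypergraph $\cH'$ consisting only of edges of the good parity together with the edge $e^*$ already removed, and we must engineer things so that an optimal matching avoiding the protected set can always be completed.

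First I would invoke a ``many atypical edges'' statement of the kind indicated by Lemma~\ref{lem:manyatypical} in the proof outline: since $\delta_{k-1}(\cH) \geq \delta^0(k,n)+1 = \delta^0(k,n)+1$ is \emph{strictly} larger than the codegree of $\cH^0(k,n)$, and $\cH$ is only $\eps$-close to $\cH^0(k,n)$, there must be $\Omega(n^{k-1})$ edges $e^*\in\cH$ of the wrong parity with respect to $(A,B)$ whose removal produces a hypergraph on $V(\cH)\setminus V(e^*)$ that meets the relevant divisibility condition; shrinking constants, at least $\eps n^{k-1}$ such $e^*$ exist. For each such $e^*$, define $\cH' \subseteq \cH - V(e^*)$ to be the spanning subhypergraph of $\cH-V(e^*)$ retaining (roughly) the edges of good parity; this is the $\cH'$ in the statement. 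Next I would verify \ref{O1}: the density bounds. The codegree condition $\delta_{k-1}(\cH)\geq(1/2-o(1))n$ transfers to a minimum $(k-1)$-degree bound of essentially $(1/2-\nu)n^{k-1}/(k-1)!$-type for the $d$-sets that are ``typical'' for the partition, so $\cH'$ is $(1/2-\nu,\cdot,k-1,\nu)$-dense for the first two parameters; the constant $\tfrac{0.15}{3^{k-1}(k-1)!}$ in the vertex-degree slot should come from the observation that every vertex lies in at least a constant fraction of all $(k-1)$-sets of the good parity (a vertex in $A$ versus in $B$ both see $\Omega(n^{k-1})$ good-parity $(k-1)$-sets), and the $\eps n^k$ atypical edges are too few to destroy this. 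These are routine counting estimates given the $\eps$-containment.

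The main obstacle is \ref{O2}: showing that a.a.s.\ a $p_\ell$-random subset $U$ of $V(\cH)\setminus V(e^*)$ is $(\cH',\nu)$-OM-stable for $\cH - V(e^*)$. Here I would argue as follows. Condition on $U$ being typical: $|U|=(1\pm\nu)p_\ell n$, $|U\cap A|=(1\pm\nu)p_\ell|A|$, the induced codegree $\delta_{k-1}(\cH'[U])$ is still $(1/2-O(\nu))|U|$, and $\cH'[U]$ still $O(\nu)$-contains the critical configuration induced by $(A\cap U, B\cap U)$ but no worse; all of this holds a.a.s.\ by Chernoff and Lemma~\ref{lemma:prob_lemma}. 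Choose the protected set $U_* \subseteq U$ and parity $s\in\{0,1\}$ as follows: $s$ is fixed by the requirement that after removing a matching $M\subseteq\cH'$ of size $\equiv s$, the number of wrong-parity edges used is controlled — actually, since every edge of $\cH'$ other than $e^*$ has good parity, the parity of $|M|$ controls $|V(M)\cap A| \bmod 2$, hence $|U\setminus V(M)\cap A|\bmod 2$, and we pick $s$ so that $|V(\cH)\setminus V(e^*)\setminus V(M)|$ lands in the residue class mod $k$ for which $m_{k-1}(k,\cdot)$-type results of R\"odl--Ruci\'nski--Szemer\'edi apply to the remaining dense hypergraph. The point of deleting $e^*$ (of wrong parity) is exactly to shift $|A|$ by an odd amount so that this residue class is the ``good'' one with no perfect-matching obstruction. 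Then, for any matching $M\subseteq\cH'$ with $|M|\equiv s\pmod 2$, $U_*\subseteq V(\cH)\setminus V(M)\subseteq U$, and $|V(M)\cap U|\leq \nu|U|$, the hypergraph $(\cH - V(e^*)) - V(M)$ on the remaining $\geq(1-\nu)p_\ell n$ vertices has codegree $\geq(1/2-O(\nu))$ times its order, is no longer $\eps'$-close to its critical configuration in the obstructing way (or if it is, it is on the ``right'' side of the parity), and the divisibility is correct by the choice of $s$ and $e^*$; hence by the robust form of the R\"odl--Ruci\'nski--Szemer\'edi theorem (the ``deleting a small proportion of vertices'' version alluded to in the outline, and the technical results they proved) it contains a perfect matching, which is the required optimal matching of $\cH - V(e^*)$ on $V\setminus V(M)$. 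Assembling: with probability $1-o(1)$ over $U$, such a $U_*$ and $s$ exist, i.e.\ $U$ is $(\cH',\nu)$-OM-stable for $\cH - V(e^*)$, giving \ref{O2}. I expect the delicate points to be (i) pinning down exactly which residue class mod $k$ and which parity $s$ make the RRS machinery applicable after the three successive vertex deletions ($V(e^*)$, then $V(M)$, with $U_*$ protected), and (ii) checking that $\eps$-containment is genuinely inherited (in the right direction) by the random induced subhypergraph, so that one really is in the ``critical'' regime where the RRS divisibility-based argument works rather than in a no-man's-land.
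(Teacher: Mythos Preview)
Your overall strategy matches the paper's: locate many edges $e^*$ whose removal fixes the divisibility obstruction, take $\cH'$ to be the typical-edge subhypergraph of $\cH - V(e^*)$, and then verify that the R\"odl--Ruci\'nski--Szemer\'edi extremal machinery applies to $\cH - V(e^*) - V(M')$ for any admissible matching $M'$ in $\cH'$. However, there is one substantive gap.

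You are missing the partition-adjustment step. The paper first invokes \cite[Fact~4.4]{Rodl2009} to replace the original standard partition $(A,B)$ by a nearby partition $(A',B')$ with the property that \emph{every} vertex $v$ satisfies $d_{E^{r^*}(A',B')}(v) > 0.2\, d_{\cK_{r^*}(A',B')}(v)$ for the special typical index $r^*$. Your justification of the vertex-degree part of \ref{O1} (``every vertex lies in at least a constant fraction of all $(k-1)$-sets of the good parity, and the $\eps n^k$ atypical edges are too few to destroy this'') does not work: the bound $|\cH^0(k,A,B)\setminus\cH|\leq\eps n^k$ is global, and a single vertex $v$ could have essentially all of its typical potential edges missing from $\cH$ (i.e.\ $d_{\cH'}(v)$ could be tiny) while the global bound is satisfied. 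The point of the adjustment is precisely that any such vertex is moved to the other side, after which it has many typical edges. Without this step, the constant $\tfrac{0.15}{3^{k-1}(k-1)!}$ in \ref{O1} is not obtainable, and more importantly you cannot verify condition (iv) of Theorem~\ref{thm:perfectmatching_extremal} (the per-vertex degree condition in $E^{r^*}$) for $\cH'' = \cH - V(e^*) - V(M')$ in \ref{O2}, so the RRS machinery does not apply.

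A few smaller points: the protected set $U_*$ is not needed in this extremal case (it can be taken empty); the parity constraint on $|M'|$ is only required for types $({\rm d})$ and $({\rm e})$, where it ensures $\frac{|A''|-|B''|}{2}\pmod 2$ is preserved (for types $({\rm a}),({\rm b}),({\rm c})$ each typical edge meets the relevant side in an even number of vertices, so no parity on $|M'|$ is needed); and the choice of $e^*$ is not always an atypical edge --- if $(A',B')$ already satisfies the divisibility condition, one takes $e^*\in E^{r^*}_\cH(A',B')$, which is typical. Your two ``delicate points'' at the end are indeed the right concerns, but the first real obstacle is the per-vertex degree condition, which forces the partition modification.
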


We will prove both lemmas in the next two sections. Subject to these lemmas, we prove Theorem~\ref{thm:main_rrs_fknp}.

\begin{proof}[Proof of Theorem~\ref{thm:main_rrs_fknp}]
Let $1/n_0 \ll \delta \ll \eps \ll \eta \ll 1/k \leq 1/3$. 
If $\cH$ $\eps$-contains neither $\cH^0(k,n)$ nor $\overline{\cH^0(k,n)}$, then Theorem~\ref{thm:main_rrs_fknp} follows by Lemmas~\ref{lem:spread_OM} and~\ref{lem:non-extremal_is_OM_stable}, since $m_{k-1}(n) = 1/k$, $\delta_{k-1}(\cH) \geq m_{k-1}(k,n) \geq n/2 - O(k)$, and $\delta_1(\cH) \geq \frac{1}{k-1} \binom{n-1}{k-2} \delta_{k-1}(\cH) \geq \frac{n^{k-1}}{3 (k-1)!}$.
Thus, we may assume that $\cH$ $\eps$-contains either $\cH^0(k,n)$ or $\overline{\cH^0(k,n)}$.

By Lemma~\ref{lem:extremal_OM}, there are at least $\eps n^{k-1}$ choices of an edge $e^* \in \cH$ satisfying~\ref{O1} and~\ref{O2}. 
We choose one of them uniformly at random and let $M^* \coloneqq \{e^* \}$. For each of the choices of $e^*$, there exists a spanning subhypergraph $\cH'$ of $\cH - V(e^*)$ which is $(1/2 - \eta ,\: \frac{0.15}{3^{k-1}(k-1)!} , \: k - 1, \: \eta)$-dense by~\ref{O1}, so $\cH'$ is $(1/k + 3\eta,\: \frac{0.15}{3^{k-1}(k-1)!},\: k - 1, \: \eps)$-dense.
By~\ref{O2} and Lemma~\ref{lem:spread_OM}, there exists a probability measure $\nu$ on the set of perfect matchings $M'$ of $\cH - V(e^*)$ that is $\frac{1}{\delta n^{k-1}}$-spread, conditioning on the choice of $e^*$. Let $M'$ be chosen randomly according to $\nu$, and let $M \coloneqq M^* \cup M'$. For any disjoint $e_1 , \dots , e_t \in \cH$,
\begin{align*}
    \pr{e_1 , \dots , e_t \in M} &\leq \pr{e_1, \dots, e_t \in M' \: | \: M^*} +
    \sum_{i=1}^{t} \pr{M^* = \{ e_i \}} \pr{ \{e_1 , \dots , e_t \}  \setminus \{e_i \} \subseteq M' \: | \: M^*}\\
    &\leq \left ( \frac{1}{\delta n^{k-1}} \right )^{t} + t \cdot \frac{1}{\delta n^{k-1}} \cdot \left ( \frac{1}{\delta n^{k-1}} \right )^{t-1} \leq \left ( \frac{e}{\delta n^{k-1}} \right )^{t}.
\end{align*}
Thus, the distribution of $M$ is $\frac{e}{\delta n^{k-1}}$-spread, as desired.
\end{proof}

\section{Proof of Lemma~\ref{lem:non-extremal_is_OM_stable}}\label{sec:nonextremal}

Roughly speaking the proof of \cref{lem:non-extremal_is_OM_stable} proceeds as follows. We show that $\cH$ contains many small absorbing structures. We then use \cref{lemma:prob_lemma} to show that a $p_\ell$-random subset of vertices~$U$ still contains many of these small absorbers. We use these to build a larger absorbing matching~$M$ of size $O(\log^4(n))$ in $\cH[U]$. The vertices of~$M$ will be the set~$U_*$ of protected vertices that is allowed by the definition of $(\cH, \eps)$-OM-stable. We let~$\widetilde{M}$ be any matching in $\cH$ such that $U_* \subseteq V(\cH) \setminus V(\widetilde{M}) \subseteq U$ and $|V(\widetilde{M}) \cap U| \leq \eps \s{U}$. Then the minimum codegree of $\cH - V(\widetilde{M}) - V(M)$ is still large enough to guarantee a matching that either covers all vertices or all but exactly $k$ vertices.
Finally, we use the absorbing property of~$M$ to complete this matching to a perfect matching in $V(\cH) - V(\widetilde{M})$.

Now we define the absorbing structures that were introduced in~\cite[Definitions 5.1 and 5.2]{Rodl2009}.

\begin{definition}[$S$-absorbing $k$-matchings and $S$-absorbing $(k+1)$-matchings]
Let $\cH$ be a $k$-uniform hypergraph and $S = \{x_1, \dots, x_k\} \in \binom{V(\cH)}{k}$. 

A $k$-matching $\{e_1, \dots, e_k\}$ in $\cH$ is \emph{$S$-absorbing} if there exists a $(k+1)$-matching $\{e'_1, \dots, e'_k, f\}$ in $\cH$ such that 
\begin{enumerate}[label = {$(\text{AM}\arabic*)$}, leftmargin= \widthof{AM100000}]
    \item $e_i \cap e'_j = \varnothing$ for all $i \neq j$,
    \item $e'_i \setminus e_i = \{x_i\}$ and $\{y_i\} \coloneqq e_i \setminus e'_i$ for all $i \in [k]$, and
    \item $f = \{y_1, \dots, y_k\}$.
\end{enumerate}

A $(k+1)$-matching $\{e_0, \dots, e_k\}$ in $\cH$ is \emph{$S$-absorbing} if there exists a $(k+2)$-matching $\set{e'_1, {\dots, e'_k}, f, f'}$ in $\cH$ such that
\begin{enumerate}[label = {$(\text{AM}\arabic*')$}, leftmargin= \widthof{AM'100000}]
    \item $e_i \cap e'_j = \varnothing$ for all $i \neq j$,
    \item $e'_i \setminus e_i = \{x_i\}$ and $\{y_i\} \coloneqq e_i \setminus e'_i$ for all $i \in [k]$, and
    \item $f \cap e_1 = \{y_1\} = f \setminus e_0$, $f' = \{y_0, y_2, \dots, y_k\}$, where $\{y_0\} \coloneqq e_0 \setminus f$.
\end{enumerate}
\end{definition}


The next lemma follows from \cite[Claim 5.1]{Rodl2009} and \cite[Fact 5.3]{Rodl2009} (see Definition~\ref{def:critical} for the definition of $\cH^0(k,n)$). It shows that in the setting of \cref{lem:non-extremal_is_OM_stable}, $\cH$ has many $S$-absorbing matchings for each set~$S$ of~$k$ vertices.

\begin{lemma}[\cite{Rodl2009}]\label{lem:no_of_matchings}
Let $1/n \ll \eps , 1/k \leq 1/3$ such that $k \mid n$. Let $\cH$ be a $k$-uniform hypergraph on $n$ vertices with $\delta_{k-1}(\cH) \geq (1/2 - 1/\log n)n$ such that $\cH^0(k,n) \not\subseteq_\eps \cH$ and $\overline{\cH^0(k,n)} \not\subseteq_\eps \cH$. Then at least one of the following holds. 
\begin{enumerate}[label = {\upshape{(\alph*})}, leftmargin= \widthof{a00000}]
    \item For every $S \subseteq V(\cH)$ with $\s{S} = k$, there are $\Omega(n^{k^2}/\log^3(n))$ many $S$-absorbing $k$-matchings in $\cH$. \label{SAMa}
    \item For every $S \subseteq V(\cH)$ with $\s{S} = k$, there are $\Omega(n^{k^2+k}/ \log^3(n))$ many $S$-absorbing $(k+1)$-matchings in $\cH$. \label{SAMb}
\end{enumerate}
\end{lemma}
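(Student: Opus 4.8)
The plan is to derive this lemma from the absorbing machinery of R\"odl, Ruci\'nski, and Szemer\'edi~\cite{Rodl2009}: part~\ref{SAMa} from (the proof of) their Claim~5.1, and part~\ref{SAMb} from their Fact~5.3. The overall shape is a structural dichotomy. Claim~5.1 asserts that either every $k$-set $S$ has $\Omega(n^{k^2}/\log^3 n)$ many $S$-absorbing $k$-matchings, or the ``small common link'' pairs of vertices of $\cH$ are so rigidly organised around a single bipartition $V(\cH)=A\cup B$ that $\cH$ (or its complement) must resemble the critical hypergraph $\cH^0(k,n)$ of Definition~\ref{def:critical}. In that second regime, Fact~5.3 shows that as long as $\cH$ stays $\eps$-far from both $\cH^0(k,n)$ and $\overline{\cH^0(k,n)}$ --- precisely, $\cH^0(k,n)\not\subseteq_\eps\cH$ and $\overline{\cH^0(k,n)}\not\subseteq_\eps\cH$ in the sense of Definition~\ref{def:epscontain} --- every $k$-set $S$ instead has $\Omega(n^{k^2+k}/\log^3 n)$ many $S$-absorbing $(k+1)$-matchings. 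So I would simply verify that our hypotheses ($\delta_{k-1}(\cH)\ge(1/2-1/\log n)n$, not $\eps$-close to $\cH^0(k,n)$ or $\overline{\cH^0(k,n)}$) feed into these two statements and conclude that~\ref{SAMa} or~\ref{SAMb} holds.

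To see why the counts take this form, recall the reduction behind Claim~5.1. Fix $S=\{x_1,\dots,x_k\}$. An $S$-absorbing $k$-matching can be built by choosing an edge $f=\{y_1,\dots,y_k\}\in\cH$ together with, for each $i\in[k]$, a $(k-1)$-set $T_i$ such that $T_i\cup\{x_i\}\in\cH$ and $T_i\cup\{y_i\}\in\cH$, all of $S$, $f$, $T_1,\dots,T_k$ pairwise disjoint (only $k^2$ vertices are involved, so disjointness costs a $1-o(1)$ factor); then $e_i\coloneqq T_i\cup\{y_i\}$ and $e_i'\coloneqq T_i\cup\{x_i\}$ realise (AM1)--(AM3). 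Hence the number of such matchings is, up to the disjointness correction, at least $\sum_{f=\{y_1,\dots,y_k\}\in\cH}\prod_{i=1}^k\bigl(|N_\cH(x_i)\cap N_\cH(y_i)|-O(k n^{k-2})\bigr)$. The codegree condition by itself does \emph{not} force $|N_\cH(x_i)\cap N_\cH(y_i)|$ to be $\Omega(n^{k-1})$ --- the links of $x_i$ and $y_i$ could be near-complementary, as happens in $\cH^0(k,n)$ --- but the key point of Claim~5.1 is that the set of pairs $(x,y)$ with small common link is governed by a single bipartition, and if this bipartition ``saturates'' then $\cH$ is $\eps$-close to $\cH^0(k,n)$ or its complement. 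Away from that, a constant fraction of the $\Omega(n^k)$ edges $f$ keep every factor in the product $\Omega(n^{k-1})$, giving $\Omega(n^{k^2}/\log^3 n)$, where the $\log^3 n$ loss absorbs the $1/\log n$ slack in the codegree.

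The $(k+1)$-matching case (part~\ref{SAMb}) is handled the same way with one extra edge $e_0$ and the compatibility conditions (AM$1'$)--(AM$3'$): the extra edge supplies exactly the additional parity freedom that is missing for $k$-matching absorbers when a rigid bipartition $A\cup B$ makes some $k$-sets $S$ (those with the ``wrong'' parity of $|S\cap A|$) unabsorbable by a $k$-matching, and it contributes the extra $n^k$ factor, so the count becomes $\Omega(n^{k^2+k}/\log^3 n)$; this is the content of Fact~5.3. I expect the main obstacle to be bookkeeping rather than a new idea: one must check that the structural alternative produced by RRS's Claim~5.1 really is subsumed by ``$\cH^0(k,n)\not\subseteq_\eps\cH$ and $\overline{\cH^0(k,n)}\not\subseteq_\eps\cH$'' under the present, weaker codegree slack $1/\log n$ (rather than the $O(k)$ slack in~\cite{Rodl2009}) --- i.e.\ that loosening the codegree only degrades the absorber counts by poly-logarithmic factors and does not enlarge the list of exceptional structures --- and that the hidden constants in all the $\Omega(\cdot)$ bounds are uniform over the choice of $S$.
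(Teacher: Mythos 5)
Your proposal matches the paper's treatment: the paper proves nothing new here, but simply observes that the statement follows from Claim~5.1 and Fact~5.3 of R\"odl--Ruci\'nski--Szemer\'edi~\cite{Rodl2009}, with part~(a) coming from the former and part~(b) from the latter under the hypotheses $\delta_{k-1}(\cH)\ge(1/2-1/\log n)n$ and $\cH^0(k,n),\overline{\cH^0(k,n)}\not\subseteq_\eps\cH$, exactly as you describe. Your additional sketch of the counting behind the absorbers and the caution about the $1/\log n$ slack are consistent with (indeed already built into) the cited statements, whose $\log^3 n$ losses reflect that same slack.
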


The next lemma follows from the proof of \cite[Fact 5.4]{Rodl2009}. It says that if we have many $S$-absorbing matchings for each set $S$ of $k$ vertices in $\cH$ then we can build an absorbing matching of size $O(\log^4(n))$ that can absorb any set of $k$ vertices.

\begin{lemma}[\cite{Rodl2009}] \label{lem:absorbing_matching}
Let $1/n \ll 1/k \leq 1/3$. Let $\cH$ be a $k$-uniform $n$-vertex hypergraph. Suppose that at least one of the following holds.
\begin{enumerate}[label = {\upshape{(\alph*})}, leftmargin= \widthof{a00000}]
    \item For every $S \subseteq V(\cH)$ with $\s{S} = k$, there are $\Omega(n^{k^2}/\log^3(n))$ many $S$-absorbing $k$-matchings in $\cH$. \label{SAMa'}
    \item For every $S \subseteq V(\cH)$ with $\s{S} = k$, there are $\Omega(n^{k^2+k}/ \log^3(n))$ many $S$-absorbing $(k+1)$-matchings in $\cH$. \label{SAMb'}
\end{enumerate}
Then $\cH$ contains a matching $M$ of size $O(\log^4(n))$ such that for each set $S \subseteq V(\cH) \setminus V(M)$ with $\s{S} =k$, there exists a perfect matching in $\cH[V(M) \cup S]$.
\end{lemma}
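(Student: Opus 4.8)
The plan is to run the standard absorbing-method argument of R\"odl, Ruci\'nski and Szemer\'edi via probabilistic deletion, handling cases~\ref{SAMa'} and~\ref{SAMb'} in parallel. Set $r := k$ in case~\ref{SAMa'} and $r := k+1$ in case~\ref{SAMb'}, so that in both cases the relevant objects are $r$-matchings, each spanning $kr$ vertices, and the hypothesis says that for every $S \in \binom{V(\cH)}{k}$ there are $\Omega(n^{kr}/\log^3 n)$ that are $S$-absorbing. The first point to record is the ``absorbing'' property itself: if $A$ is an $S$-absorbing $r$-matching, then the witnessing $(r+1)$-matching (the $(k+1)$-matching in case~\ref{SAMa'}, the $(k+2)$-matching in case~\ref{SAMb'}) has vertex set exactly $V(A) \cup S$, as one checks directly from the conditions defining an $S$-absorbing $r$-matching, and hence is a perfect matching of $\cH[V(A) \cup S]$; whereas $A$ itself is a perfect matching of $\cH[V(A)]$. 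Consequently, if $M$ is a vertex-disjoint union of $r$-matchings at least one of which is $S$-absorbing, then $\cH[V(M) \cup S]$ has a perfect matching: use the witnessing $(r+1)$-matching on one absorbing piece and the pieces themselves on the remainder. So it suffices to find a vertex-disjoint family $\mathcal F$ of $O(\log^4 n)$ $r$-matchings of $\cH$ such that for every $S \in \binom{V(\cH)}{k}$ at least one member of $\mathcal F$ is $S$-absorbing.

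To build $\mathcal F$, first I would sample. Since $\cH$ has at most $n^{kr}$ many $r$-matchings, include each $r$-matching of $\cH$ independently with probability $p := C \log^4 n / n^{kr}$, where $C = C(k)$ is a large constant fixed at the end, obtaining a random family $\mathcal F_0$. Three events then hold a.a.s. First, $\mathbb E|\mathcal F_0| \le p n^{kr} = C \log^4 n$, so by a Chernoff bound (Lemma~\ref{lem:chernoff}) $|\mathcal F_0| \le 2C\log^4 n$ a.a.s. Second, for fixed $S$ the number $X_S$ of $S$-absorbing members of $\mathcal F_0$ is a sum of independent Bernoullis with $\mathbb E[X_S] = \Omega(p n^{kr}/\log^3 n) = \Omega(C\log n)$, so Lemma~\ref{lem:chernoff} gives $\Pr[X_S < \mathbb E[X_S]/2] \le 2\exp(-\Omega(C\log n)) = n^{-\Omega(C)}$, and a union bound over the at most $n^k$ choices of $S$ (with $C$ large enough) shows $X_S = \Omega(C\log n) \ge 1$ for all $S$ simultaneously a.a.s. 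Third, the number of ordered pairs of $r$-matchings of $\cH$ sharing a vertex is at most $n^{kr} \cdot kr \cdot kr \cdot n^{kr-1} = O(n^{2kr-1})$ (choose the first matching, a common vertex in it, its location in the second matching, and the remaining $kr-1$ vertices of the second), so the expected number of intersecting pairs of members of $\mathcal F_0$ is $O(p^2 n^{2kr-1}) = O(\log^8 n / n) = o(\log n)$, whence by Markov's inequality a.a.s.\ $\mathcal F_0$ has $o(\log n)$ intersecting pairs.

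Now fix an outcome for which all three events hold, and delete one matching from each intersecting pair to obtain $\mathcal F \subseteq \mathcal F_0$; the members of $\mathcal F$ are pairwise vertex-disjoint, so $M := \bigcup \mathcal F$ is a matching of $\cH$ with $|M| = r|\mathcal F| \le r|\mathcal F_0| = O(\log^4 n)$. Since only $o(\log n)$ matchings were deleted and each $X_S$ was $\Omega(C\log n)$, every $S \in \binom{V(\cH)}{k}$ still has at least one $S$-absorbing member in $\mathcal F$. Given any $S \subseteq V(\cH) \setminus V(M)$ with $|S| = k$, pick an $S$-absorbing $A \in \mathcal F$; as $S$ is disjoint from $V(M) \supseteq V(A)$, the witnessing $(r+1)$-matching is a perfect matching of $\cH[V(A) \cup S]$, which together with the members of $\mathcal F \setminus \{A\}$ (a perfect matching of $\cH[V(M) \setminus V(A)]$) yields a perfect matching of $\cH[V(M) \cup S]$. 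This $M$ is as required, and since existence was shown to hold a.a.s.\ for the random $\mathcal F_0$, the (deterministic) lemma follows.

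The only real obstacle is the bookkeeping in the second step: one must choose the constant $C$ large enough that the Chernoff bound for $X_S$ beats the union bound over all $\binom nk$ sets $S$ (possible since $\mathbb E[X_S] = \Theta(C\log n)$ grows with $C$), while $p$ stays small enough that the expected number of intersecting pairs, $\Theta(C^2\log^8 n / n)$, is $o(\log n)$ --- in particular much smaller than $\min_S \mathbb E[X_S]$, so that the deletion step cannot destroy the absorbing property. Both constraints are compatible and routine, and no new idea beyond the usual absorbing argument is needed.
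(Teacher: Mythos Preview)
Your proposal is correct and is exactly the standard random-sampling-plus-deletion argument of R\"odl, Ruci\'nski, and Szemer\'edi that the paper cites; indeed, the paper does not give its own proof of this lemma but simply refers to the proof of \cite[Fact~5.4]{Rodl2009}, which is precisely what you have reproduced. The bookkeeping (choice of $p$, Chernoff and union bounds, Markov for intersecting pairs, and the final absorption step) is all in order.
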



The following corollary is a direct application of  Lemma~\ref{lemma:prob_lemma}. We use it to show that for a $p_\ell$-random subset $U$ of vertices of $\cH$, the property of $\cH$ of having many $S$-absorbing matchings is inherited \aas by $\cH[U]$.

\begin{corollary} \label{cor:M_count_in_U}
Let $1/n \ll 1/s \leq 1/k \leq 1/3$. Let $\cH$ be a $k$-uniform $n$-vertex hypergraph. Let $\ell \coloneqq \left\lceil\frac{k-1}{k} \log_2(n)\right\rceil$, $C_\ell \coloneqq \sum_{i=1}^\ell 2^{-i}$, and $p_\ell \coloneqq \frac{1}{C_\ell 2^{\ell}}$. Let $U$ be a $p_\ell$-random subset of $V(\cH)$, and let $\cM$ be a set of $s$-matchings in $\cH$ with $\s{\cM} = \Omega(n^{sk}/ \log^3(n))$. Then with probability at least $1 -  \exp(-n^{1/6sk^2})$, the number of matchings in $\cM$ that are contained in $\cH[U]$ is $\Omega((p_\ell n)^{sk} / \log^3(p_\ell n))$.
\end{corollary}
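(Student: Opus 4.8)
The plan is to apply Lemma~\ref{lemma:prob_lemma}\ref{typical_i} with a carefully chosen family $\cF$. First I would set up the reduction: since each matching in $\cM$ is a set of $s$ pairwise-disjoint edges, it uses exactly $sk$ distinct vertices, so I can identify each $M \in \cM$ with the set $V(M) \in \binom{V(\cH)}{sk}$. Multiple matchings could a priori share the same vertex set, but since the number of perfect matchings on $sk$ vertices is bounded by a constant depending only on $s$ and $k$, the multiplicity is $O(1)$; hence the number of \emph{distinct} vertex sets arising this way is still $\Omega(n^{sk}/\log^3 n)$. Let $\cF \subseteq \binom{V(\cH)}{sk}$ be this collection of vertex sets (with $s$ there playing the role of $sk$ in the statement of Lemma~\ref{lemma:prob_lemma}). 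A matching $M \in \cM$ is contained in $\cH[U]$ if and only if $V(M) \subseteq U$, so up to the $O(1)$ multiplicity, the number of matchings of $\cM$ inside $\cH[U]$ equals the number of members of $\cF$ contained in $U$.

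Next I would check the hypotheses of Lemma~\ref{lemma:prob_lemma}\ref{typical_i}. We apply it with the constant $\beta \coloneqq (k-1)/k$, with $p = p_\ell = \Theta(n^{-(k-1)/k})$, so that $np_\ell = \Theta(n^{1/k}) = \Theta(n^{\beta/(k-1)})$; one can absorb the constants to ensure $np_\ell \geq \eps n^\beta$ for a suitable small $\eps$. The size condition $|\cF| \geq \eps n^{sk}(np_\ell)^{-1/2}$ holds comfortably since $|\cF| = \Omega(n^{sk}/\log^3 n)$ while $n^{sk}(np_\ell)^{-1/2} = n^{sk}\cdot n^{-\Theta(1)}$ is polynomially smaller. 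Lemma~\ref{lemma:prob_lemma}\ref{typical_i} then gives that with probability at least $1 - \exp(-n^{\beta/(10sk)})$, the set $U$ is $(p_\ell,\eps,\cF)$-typical, i.e.\ the number of members of $\cF$ in $U$ is $(1\pm\eps)\sum_{F\in\cF}p_\ell^{|F|} = (1\pm\eps)|\cF|p_\ell^{sk} = \Omega(n^{sk}p_\ell^{sk}/\log^3 n) = \Omega((p_\ell n)^{sk}/\log^3 n)$. Finally, noting $\log(p_\ell n) = \Theta(\log n)$, this is the same as $\Omega((p_\ell n)^{sk}/\log^3(p_\ell n))$, and $\beta/(10sk) = (k-1)/(10sk^2) \geq 1/(6sk^2)$ would need a slight adjustment of the exponent — more carefully, $\beta/(10sk) = \frac{k-1}{10sk^2}$, and for $k \geq 2$ this exceeds $\frac{1}{6sk^2} \cdot \frac{6(k-1)}{10} \geq \frac{1}{6sk^2}$ precisely when $k \geq 2$ gives $6(k-1)/10 \geq 1$, i.e.\ $k \geq 8/3$, so for $k \geq 3$ we indeed have $\exp(-n^{\beta/(10sk)}) \leq \exp(-n^{1/(6sk^2)})$, as required. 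Dividing the count by the $O(1)$ multiplicity changes only the implicit constant in the $\Omega(\cdot)$, so the conclusion for matchings follows.

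The main obstacle, and really the only subtlety, is the bookkeeping around the identification of matchings with vertex sets: one must argue that passing from $\cM$ to the set of vertex sets $\cF$ only loses a constant factor (bounded number of matchings per vertex set) and that the event "$M \subseteq \cH[U]$" is genuinely equivalent to "$V(M) \subseteq U$", which it is because $\cH[U]$ contains an edge $e$ of $\cH$ precisely when $e \subseteq U$. Everything else is a routine substitution of parameters into Lemma~\ref{lemma:prob_lemma}\ref{typical_i} together with the observations that $p_\ell = \Theta(n^{-(k-1)/k})$ and $\log(p_\ell n) = \Theta(\log n)$. I would also double-check the direction of the inequality on the exponent $\beta/(10sk)$ versus $1/(6sk^2)$ to make sure the stated probability bound is implied; as noted this holds for all $k \geq 3$ and $s \geq 1$.
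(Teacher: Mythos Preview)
Your overall strategy---identify each $s$-matching with its vertex set in $\binom{V(\cH)}{sk}$ and apply Lemma~\ref{lemma:prob_lemma}\ref{typical_i} with $sk$ playing the role of $s$---is exactly the route the paper has in mind. The bookkeeping about multiple matchings on the same vertex set and the equivalence of $M\subseteq\cH[U]$ with $V(M)\subseteq U$ is all fine.

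However, your choice of $\beta$ is wrong. The hypothesis of Lemma~\ref{lemma:prob_lemma} is $np\geq \eps n^\beta$, and since $np_\ell=\Theta(n^{1/k})$, one must take $\beta\leq 1/k$; you set $\beta=(k-1)/k$, which makes $n^\beta=n^{(k-1)/k}\gg n^{1/k}\asymp np_\ell$, so the hypothesis fails. (The expression $n^{\beta/(k-1)}$ you wrote plays no role in Lemma~\ref{lemma:prob_lemma}.) With the correct choice $\beta=1/k$, the failure probability from Lemma~\ref{lemma:prob_lemma}\ref{typical_i} is $\exp(-n^{\beta/(10sk)})=\exp(-n^{1/(10sk^2)})$, which is \emph{weaker} than the stated $\exp(-n^{1/(6sk^2)})$; your inequality comparison was based on the erroneous $\beta$. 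This weaker bound is entirely adequate for how the corollary is used (a union bound over $n^{O(1)}$ sets in the proof of Lemma~\ref{lem:non-extremal_is_OM_stable}), and the paper's one-line justification via Lemma~\ref{lemma:prob_lemma} implicitly tolerates this; to recover the exact constant $1/6$ one would instead apply the Kim--Vu inequality directly, exploiting that here $\cE'(\cG)\leq n^{-1/(2k)}\mathbb{E}[Y_\cG]$ rather than merely $(np)^{-1/4}\mathbb{E}[Y_\cG]$.
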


To prove \cref{lem:non-extremal_is_OM_stable}, we also need the following result by Han~\cite{Han2015}.

\begin{theorem}[{\cite[Theorem 1.1]{Han2015}}] \label{thm:Han_OM}
Let $1/n \ll 1/k \leq 1/3$ such that $k$ does not divide $n$. Let $\cH$ be a $k$-uniform hypergraph on $n$ vertices with $\delta_{k-1}(\cH) \geq \lfloor n/k \rfloor$. Then $\cH$ contains an optimal matching.
\end{theorem}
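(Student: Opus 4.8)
The plan is to reprove Han's theorem by the absorption method. Write $n = ka + b$ with $1 \le b \le k-1$, so $\lfloor n/k\rfloor = a$ and an ``optimal'' matching is one of size exactly $a$, missing $b$ vertices. The hypothesis $\delta_{k-1}(\cH) \ge a$ is best possible: lowering it by one makes the statement false, since the hypergraph whose edges are exactly the $k$-sets meeting a fixed set $A$ with $\s{A} = a-1$ has codegree $a-1$ and matching number $a-1$. Consequently the heart of the proof will be a separate treatment of hypergraphs that are close to this ``space barrier'', and every estimate must be carried out with this tightness in mind.

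\textbf{Step 1 (absorbing matching).} Fix a constant $v_0 = v_0(k) \in k\mathbb{N}$. For a $k$-set $T$, call a $v_0$-set $W \subseteq V(\cH)\setminus T$ a \emph{$T$-absorber} if both $\cH[W]$ and $\cH[T\cup W]$ have perfect matchings. I would first show that every $k$-set $T$ has $\Omega(n^{v_0})$ $T$-absorbers: one builds such a $W$ greedily, interleaving edges through the vertices of $T$ with edges through fresh ``link'' vertices, each step extending a $(k-1)$-set which, by the codegree hypothesis, has $d_\cH(\cdot)\ge a - O(1) = \Omega(n)$ completions, so a positive fraction of all $v_0$-sets meeting $A$-like positions work. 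Given this count, a by-now-standard probabilistic argument — include each absorber independently with probability $p \approx \eps/n^{v_0-1}$, apply \cref{lem:chernoff} and \cref{lemma:prob_lemma}, then delete one absorber from each intersecting pair — produces a matching $M_0$ with $\s{V(M_0)} \le \eps n$ such that for every $k$-set $T \subseteq V(\cH)\setminus V(M_0)$ there remain $\Omega(\eps n)$ disjoint $T$-absorbers among the chosen ones. Hence $\cH[V(M_0)\cup W]$ has a perfect matching whenever $W \subseteq V(\cH)\setminus V(M_0)$ satisfies $k\mid\s{W}$ and $\s{W}\le\eps^2 n$ (partition $W$ into $k$-sets and assign each a private absorber of $M_0$).

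\textbf{Step 2 (almost perfect matching and the case split).} Set $\cH' \coloneqq \cH - V(M_0)$, with $n' \coloneqq n - \s{V(M_0)}$ vertices and $\delta_{k-1}(\cH') \ge a - \s{V(M_0)} \ge (1/k - 2\eps)n'$. We seek a matching $M_1$ of $\cH'$ missing $\le \eps^2 n$ vertices. If $\cH$ is \emph{not} $\eps$-close to the space barrier — there is no $A$ with $\s{A} \le a$ such that all but $\le \eps n^k$ edges meet $A$ — then such an $M_1$ exists. This is a \emph{stability} version of \cref{lem:almostPM_general}: our codegree is a hair below the $(1/k + \eps_2)n'$ required there (the gap is $\Theta(n)$, so \cref{lem:almostPM_general} cannot be applied directly), but passing to the reduced hypergraph via \cref{thm:Weak_HRL}, which by \cref{lem:reduced_degree} inherits an almost-$(1/k)$ codegree and also inherits non-closeness to the barrier, lets one push the near-matching argument through at the tight codegree. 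Let $W_0$ be the $\le\eps^2 n$ vertices missed by $M_1$. Since $\s{W_0} = n - k\s{M_0} - k\s{M_1} \equiv b \pmod{k}$, deleting $b$ arbitrary vertices from $W_0$ yields $W$ with $k\mid\s{W}$, $\s{W}\le\eps^2 n$, and $W \cap V(M_0) = \varnothing$; by Step 1, $\cH[V(M_0)\cup W]$ has a perfect matching $M_0'$, and $M_0'\cup M_1$ is an optimal matching.

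\textbf{Step 3 (the extremal case) and the main obstacle.} It remains to handle $\cH$ that \emph{is} $\eps$-close to the space barrier, and this is the technical heart. After deleting the $\le\eps n^k$ exceptional edges, every remaining edge meets $A$, so any matching confined to the cleaned hypergraph has at most $\s{A}$ edges; one shows that the codegree hypothesis forces $\s{A}$ to be essentially $a$ (a $(k-1)$-set disjoint from $A$ has codegree at most $\s{A}$ plus the few exceptional edges through it, so $\s{A}$ cannot be much below $a$ without violating $\delta_{k-1}(\cH)\ge a$), and then a careful direct argument — selecting $a$ vertex-disjoint edges, one through each vertex of $A$, using that almost every $(k-1)$-subset of $V(\cH)\setminus A$ is joined by an edge to almost all of $A$ and that $\s{V(\cH)\setminus A} = (k-1)a + b > (k-1)a$, and making up any small shortfall in $\s{A}$ with edges missing $A$ — produces an optimal matching. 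The main obstacle is precisely this extremal analysis: because the bound $\lfloor n/k\rfloor$ is best possible, the usual ``$\Omega(n)$ of slack'' shortcuts are unavailable, so the absorber count in Step 1, the stability statement in Step 2, and above all the structural argument in Step 3 (including cleaning away the $\eps n^k$ stray edges and dealing with the possibility $\s{A} < a$) must all be executed against the precise extremal configuration; this is where essentially all of the difficulty of the theorem is concentrated.
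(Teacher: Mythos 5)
This statement is not proved in the paper at all: it is Han's theorem, imported verbatim as \cite[Theorem 1.1]{Han2015} and used as a black box in the proof of Lemma~\ref{lem:non-extremal_is_OM_stable}. So there is no internal proof to compare against; what you have written has to stand on its own as a reproof of Han's result, and as it stands it does not. Your Steps 2 and 3 are announcements of exactly the two statements that carry essentially all of the difficulty, not arguments for them. In Step 2 you need a genuine stability theorem: ``codegree $(1/k-2\eps)n'$ plus non-closeness to the space barrier implies a matching missing only $\eps^2 n$ vertices.'' This is not a routine perturbation of Lemma~\ref{lem:almostPM_general} (whose hypothesis has $\Theta(n)$ of slack), and saying that the reduced hypergraph ``inherits non-closeness to the barrier'' and that this ``lets one push the near-matching argument through at the tight codegree'' is precisely the claim to be proved; note that without the stability input the deficiency $2\eps n$ in codegree only yields a matching missing $O(\eps n)$ vertices (the barrier itself shows this is sharp), which your absorbing matching of size $\le \eps n$ cannot be guaranteed to swallow. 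In Step 3 the entire extremal case is compressed into one sentence (``a careful direct argument \dots produces an optimal matching''); handling $|A|<a$, the $\eps n^k$ stray edges, and the completion of the last few matching edges against the exact bound $\lfloor n/k\rfloor$ is where the theorem lives, and none of it is executed.

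Step 1 also needs more than the greedy sketch: the claim that \emph{every} $k$-set $T$ has $\Omega(n^{v_0})$ absorbers under the bare hypothesis $\delta_{k-1}(\cH)\ge\lfloor n/k\rfloor$ is plausible (one can check it on the space barrier by routing the absorber through $A$), but ``interleaving edges through $T$ with fresh link vertices'' does not specify a structure whose count you can actually lower-bound at this codegree, where the swap-type absorbers used in the $\delta\ge(1/2+o(1))n$ setting are unavailable. Finally, it is worth knowing that Han's actual proof does not follow this absorption-plus-extremal scheme at all: since only a near-perfect matching (missing $b\le k-1$ vertices) is required, he argues directly about a maximum matching and derives a contradiction by a counting argument on the links of uncovered vertices, with no regularity, no absorbers, and no extremal case split. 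Your outline is a reasonable high-level plan, and heavier machinery than the problem needs, but with the stability lemma, the extremal analysis, and the absorber count all left as assertions it is a strategy, not a proof.
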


Now we are ready to prove Lemma~\ref{lem:non-extremal_is_OM_stable}.

\begin{proof}[Proof of Lemma~\ref{lem:non-extremal_is_OM_stable}]
By \cref{lem:no_of_matchings}, at least one of the following holds. 
\begin{enumerate}[label = {\upshape{(\alph*})}, leftmargin= \widthof{a00000}]
    \item For every $S \subseteq V(\cH)$ with $\s{S} = k$, there are $\Omega(n^{k^2}/\log^3(n))$ many $S$-absorbing $k$-matchings in $\cH$. \label{SAMa''}
    \item For every $S \subseteq V(\cH)$ with $\s{S} = k$, there are $\Omega(n^{k^2+k}/ \log^3(n))$ many $S$-absorbing $(k+1)$-matchings in $\cH$. \label{SAMb''}
\end{enumerate}
Suppose that \labelcref{SAMa''} holds (the proof for if \labelcref{SAMb''} holds is similar). Let $n_* \coloneqq \s{U}$. We have that \aas $n_* = (1\pm \eps) p_\ell n$ and $\delta_{k-1}(\cH[U]) \geq (1/2 - 2\eps)\s{U}$. By \cref{cor:M_count_in_U} and a union bound, it follows that \aas for every $S \in \binom{U}{k}$, the number of $S$-absorbing $k$-matchings in $\cH[U]$ is $\Omega(n_*^{k^2}/ \log^3(n_*))$. Suppose that all of these events occur. By \cref{lem:absorbing_matching}, there exists a matching $M$ in $\cH[U]$ of size $O(\log^4 (n_*))$ such that for each set $S \subseteq U \setminus V(M)$ with $\s{S} = k$, there exists a perfect matching in $\cH[V(M) \cup S]$. Let $U_* \coloneqq V(M)$, and note that $\s{U_*} \leq \eps \s{U}$. 
Let $\widetilde{M}$ be a matching in $\cH$ such that $U_* \subseteq V(\cH) \setminus V(\widetilde{M}) \subseteq U$ and $|V(\widetilde{M}) \cap U| \leq \eps \s{U}$.
Let $U' \coloneqq V(\cH) \setminus V(\widetilde{M})$, and note that $\s{U'} \equiv n \equiv 0 \modu{k}$. Let $u \in U' \setminus U_*$ and $U'' \coloneqq U' \setminus (U_* \cup \{u\})$. Note that $\s{U''} \equiv k-1 \modu{k}$ and $\delta_{k-1}(\cH[U'']) \geq \s{U''}/k$. Thus, by \cref{thm:Han_OM}, $\cH[U'']$ contains a matching $M_*$ covering all but a set $S_*$ of $k-1$ vertices of $U''$. Let $S \coloneqq S_* \cup \{u\}$. By the absorption property of $M$, $\cH[U_* \cup S]$ contains a perfect matching $M'$. Note that $M_* \cup M'$ is a perfect matching in $\cH - V(\widetilde{M}) = \cH[U']$. Hence, $U$ is $(\cH, \eps)$-OM-stable for $\cH$. 
\end{proof}



\section{Proof of Lemma~\ref{lem:extremal_OM}}\label{sec:extremal}

Now we briefly sketch the proof of Lemma~\ref{lem:extremal_OM}. 
Since $\cH$ is close to being a critical hypergraph, there are $\Omega(n^{k-1})$ many `atypical' edges (see Lemma~\ref{lem:manyatypical}). After choosing one of them (say $e^*$) and deleting the vertices from $V(e^*)$, the resulting hypergraph $\cH - V(e^*)$ will meet the `divisibility condition' (see Definition~\ref{def:div}) which ensures a perfect matching even though the minimum codegree is slightly below $m_{k-1}(k,n)$ (see Theorem~\ref{thm:perfectmatching_extremal}).
For a spanning subhypergraph $\cH'$ of $\cH - V(e^*)$ which consists of all typical edges of $\cH - V(e^*)$, by the definition of typical edges, $\cH'$ is also close to being a critical hypergraph. Thus, $\cH'$ is `dense' enough to satisfy~\ref{O1}.
To show~\ref{O2}, for a $p_\ell$-random subset $U_\ell$ of $V(\cH')$, we need to make sure that $\cH - V(e^*) - V(M')$ has a perfect matching for any matching $M'$ of $\cH'$ with $2 \mid |M'|$ and $|V(M') \cap U_\ell| = o(|U_\ell|)$. Using the structural properties of $\cH'$, we can show that $\cH - V(e^*) - V(M')$ is close to being a critical hypergraph and also meets the divisibility condition. Thus, by Theorem~\ref{thm:perfectmatching_extremal}, $\cH - V(e^*) - V(M')$ has a perfect matching.

Since the proof of Lemma~\ref{lem:extremal_OM} relies on some structural information of $\cH$, we need to introduce several notations first.

Let $k \geq 3$ be an integer, let $0 \leq r \leq k$ be an integer, and let $A$ and $B$ be disjoint sets.
Let $\cK_r(A,B) \coloneqq \{ e \subseteq A \cup B \colon \:|e|=k,\:|e \cap A| = r,\:|e \cap B| = k-r \}$. 
For any $k$-uniform hypergraph $\cH$ with $V(\cH) = A \cup B$, let $E_\cH^j(A,B) \coloneqq \cH \cap \cK_j(A,B) = \{ e \in \cH : |e \cap A| = j \}$. We often omit the subscript $\cH$ if it is clear.
Extending the definition of $\cH^0(k,n)$, let us define
\begin{align*}
    \cH^0(k,A,B) \coloneqq
    \begin{cases}
        \bigcup_{\text{$r$: even}}\cK_r(A,B) & \text{for odd $k$}\\
        \bigcup_{\text{$r$: odd}}\cK_r(A,B) & \text{for even $k$.}
    \end{cases}
\end{align*}

Note that
\begin{align*}
    \overline{\cH^0(k,A,B)} =
    \begin{cases}
        \bigcup_{\text{$r$: odd}}\cK_r(A,B) = \bigcup_{\text{$r$: even}}\cK_r(B,A) & \text{for odd $k$}\\
        \bigcup_{\text{$r$: even}}\cK_r(A,B) & \text{for even $k$.}
    \end{cases}
\end{align*}


Let $n \in \mathbb{N}$ divisible by $k$. If $k$ is odd, then let $a(k,n)$ be the unique odd integer from $\{(n + \ell)/2 : \ell \in \mathbb{Z},\:-2 \leq \ell \leq 1 \}$. Otherwise if $k$ is even, then let
\begin{align*}
    a(k,n) \coloneqq
    \begin{cases}
        n/2 - 1 & \text{for even $n/k$}\\
        n/2 - 1 & \text{for odd $n/k$ and odd $n/2$}\\
        n/2 & \text{for odd $n/k$ and even $n/2$.}
    \end{cases}
\end{align*}

\begin{definition}[Standard ordered pair]
Let $k,n \geq 3$ be positive integers such that $k \mid n$.
Let $A$ and $B$ be disjoint sets such that $|A|+|B|=n$. An ordered pair $(A,B)$ is \emph{standard} if $|A| = a(k,n)$ and $|B| = n - a(k,n)$.
\end{definition}

Note that $\cH^0(k,n)$ is a $k$-uniform $n$-vertex hypergraph isomorphic to $\cH^0(k,A,B)$ for a standard ordered pair $(A,B)$.

\begin{definition}[Types]\label{def:type}
Let $k,n \geq 3$ be positive integers such that $k \mid n$, and let $\cH$ be a $k$-uniform $n$-vertex hypergraph.  For $\eps \in (0,1)$ and an ordered partition $(A,B)$ of $V(\cH)$ such that either  
$|\cH^0(k,A,B) \setminus \cH| \leq \eps n^k$ or $|\overline{\cH^0(k,A,B)} \setminus \cH| \leq \eps n^k$ holds, we define the following.

\begin{enumerate}[{\rm (\alph*)}]
    \item If $k$ is odd and $|\cH^0(k,A,B) \setminus \cH| \leq \eps n^k$, then we say $\cH$ \emph{belongs to the type $(\rm a)$} with respect to $(\eps,A,B)$. 
    
    \item If $k$ is odd and $|\overline{\cH^0(k,A,B)} \setminus \cH| \leq \eps n^k$, then we say $\cH$ \emph{belongs to the type $(\rm b)$} with respect to $(\eps,A,B)$. 
    
    \item If $k$ is even and $|\overline{\cH^0(k,A,B)} \setminus \cH| \leq \eps n^k$, then we say $\cH$ \emph{belongs to the type $(\rm c)$} with respect to $(\eps,A,B)$. 
    
    \item If $k \equiv 0\:({\rm mod}\:4)$ and $|\cH^0(k,A,B) \setminus \cH| \leq \eps n^k$, then we say $\cH$ \emph{belongs to the type $(\rm d)$} with respect to $(\eps,A,B)$. 
    
    \item If $k \equiv 2\:({\rm mod}\:4)$ and $|\cH^0(k,A,B) \setminus \cH| \leq \eps n^k$, then we say $\cH$ \emph{belongs to the type $(\rm e)$} with respect to $(\eps,A,B)$. 
\end{enumerate}
We also say $\cH$ \emph{belongs to the type $\alpha$} if it belongs to the type $\alpha$ with respect to $(\eps, A, B)$ for some $\eps \in (0, 1)$ and partition $(A, B)$ of $V(\cH)$.
\end{definition}

\begin{definition}[Typical indices and edges]\label{def:typicalindex}
Let $k \geq 3$ be a positive integer.
For $\alpha \in \{ {\rm (a)} , {\rm (b)} , {\rm (c)} ,\allowbreak {\rm (d)} ,\allowbreak {\rm (e)} \}$, an index $r \in \{0 , \dots , k\}$ is called $\alpha$-\emph{typical} (with respect to $k$) if
\begin{align*}
    r \equiv
    \begin{cases}
        0\:({\rm mod}\:2) & \text{for $\alpha \in \{ {\rm (a)},{\rm (c)} \}$}\\
        1\:({\rm mod}\:2) & \text{for $\alpha \in \{ {\rm (b)},{\rm (d)},{\rm (e)} \}$}.
    \end{cases}
\end{align*}
Otherwise $r$ is called $\alpha$-\emph{atypical}.

For any $k$-uniform hypergraph $\cH$ with an ordered partition $(A,B)$, an edge $e \in \cH$ is \emph{$\alpha$-typical} with respect to $(A,B)$ if $e \in E_\cH^r(A,B)$ for an $\alpha$-typical index $r$. Otherwise an edge $e$ is called \emph{$\alpha$-atypical} with respect to $(A,B)$.
\end{definition}

\begin{obs}\label{obs:epscontain_type}
%
%
Let $k \geq 3$ be a positive integer, and let $\alpha \in \{ {\rm (a)} , {\rm (b)} , {\rm (c)} , {\rm (d)} ,\allowbreak {\rm (e)} \}$.  For disjoint sets $A$ and $B$,
\begin{equation*}
    \bigcup_{r:\alpha\text{-typical}} \cK_r(A,B) = \left\{\begin{array}{l l}
    \cH^0(k,A,B) & \text{ if } \alpha \in \{ {\rm (a)} , {\rm (d)} , {\rm (e)} \} \text{ and}\\
    \overline{\cH^0(k,A,B)} & \text{ if } \alpha \in \{ {\rm (b)} , {\rm (c)} \}.
    \end{array}\right.
\end{equation*}
In particular, for $\eps \in (0, 1)$, a $k$-uniform hypergraph $\cH$ belongs to the type $\alpha$ with respect to $(\eps, A, B)$ if and only if
\begin{equation*}
    \sum_{r:\alpha\text{-typical}} |\cK_r(A,B) \setminus E_\cH^r(A,B)| \leq \eps n^k.
\end{equation*}
\end{obs}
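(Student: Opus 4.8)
The plan is to prove the first displayed identity by directly unwinding the definitions of $\cK_r(A,B)$, of $\cH^0(k,A,B)$, and of $\alpha$-typical indices (the formulas displayed just before the observation together with Definition~\ref{def:typicalindex}), handling the five possible values of $\alpha$ separately; the ``in particular'' statement will then follow from the observation that the families $\cK_r(A,B)$ with $0 \le r \le k$ are pairwise disjoint.

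First I would record that by Definition~\ref{def:typicalindex}, $\bigcup_{r:\alpha\text{-typical}}\cK_r(A,B)$ equals $\bigcup_{r\text{ even}}\cK_r(A,B)$ when $\alpha \in \{{\rm (a)},{\rm (c)}\}$ and equals $\bigcup_{r\text{ odd}}\cK_r(A,B)$ when $\alpha \in \{{\rm (b)},{\rm (d)},{\rm (e)}\}$. On the other hand, the definition of $\cH^0(k,A,B)$ together with the formula for $\overline{\cH^0(k,A,B)}$ displayed just above the observation give $\cH^0(k,A,B) = \bigcup_{r\text{ even}}\cK_r(A,B)$ and $\overline{\cH^0(k,A,B)} = \bigcup_{r\text{ odd}}\cK_r(A,B)$ when $k$ is odd, and $\cH^0(k,A,B) = \bigcup_{r\text{ odd}}\cK_r(A,B)$ and $\overline{\cH^0(k,A,B)} = \bigcup_{r\text{ even}}\cK_r(A,B)$ when $k$ is even. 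Matching these up case by case: type ${\rm (a)}$ forces $k$ odd and typical $r$ even, so the union is $\cH^0(k,A,B)$; type ${\rm (b)}$ forces $k$ odd and typical $r$ odd, so it is $\overline{\cH^0(k,A,B)}$; type ${\rm (c)}$ forces $k$ even and typical $r$ even, so it is $\overline{\cH^0(k,A,B)}$; and types ${\rm (d)}$ and ${\rm (e)}$ each force $k$ even and typical $r$ odd, so it is $\cH^0(k,A,B)$. This is precisely the asserted case split.

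For the ``in particular'' part, set $n \coloneqq |A| + |B| = |V(\cH)|$ and let $\cG$ denote the hypergraph on the right-hand side of the first identity corresponding to the given $\alpha$, so that $\cG = \bigcup_{r:\alpha\text{-typical}}\cK_r(A,B)$ with the union taken over pairwise disjoint sets. By Definition~\ref{def:type}, $\cH$ belongs to type $\alpha$ with respect to $(\eps,A,B)$ exactly when $|\cG \setminus \cH| \le \eps n^k$ (this is the condition $|\cH^0(k,A,B)\setminus\cH|\le\eps n^k$ for $\alpha\in\{{\rm (a)},{\rm (d)},{\rm (e)}\}$ and $|\overline{\cH^0(k,A,B)}\setminus\cH|\le\eps n^k$ for $\alpha\in\{{\rm (b)},{\rm (c)}\}$). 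Since $\cK_r(A,B)\setminus E_\cH^r(A,B) = \cK_r(A,B)\setminus(\cH\cap\cK_r(A,B)) = \cK_r(A,B)\setminus\cH$ for each $r$, and since distinct $\cK_r(A,B)$ are disjoint, we obtain $|\cG\setminus\cH| = \sum_{r:\alpha\text{-typical}}|\cK_r(A,B)\setminus E_\cH^r(A,B)|$, which gives the claimed equivalence. The only real work is organizing the five-way case analysis correctly; there is no genuine obstacle beyond careful bookkeeping of the parity of $k$ and of the typical indices.
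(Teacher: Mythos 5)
Your proposal is correct: the paper states this as an Observation with no written proof, precisely because it follows by exactly the definitional unwinding you carry out (matching the parity of the $\alpha$-typical indices against the parity cases in the definitions of $\cH^0(k,A,B)$ and $\overline{\cH^0(k,A,B)}$, with the parity of $k$ forced by the type, and then using $E_\cH^r(A,B)=\cH\cap\cK_r(A,B)$ together with the disjointness of the $\cK_r(A,B)$ for the ``in particular'' part). So your argument coincides with the paper's intended justification.
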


\begin{definition}[Special typical index]\label{def:special}
Let $k \geq 3$ be a positive integer, and let $\cH$ be a $k$-uniform hypergraph which belongs to the type $\alpha \in \{ {\rm (a)} , {\rm (b)} , {\rm (c)} , {\rm (d)} , {\rm (e)} \}$. 
The \emph{special $\alpha$-typical index} for $\cH$ is $r^* \coloneqq k-1,\:1,\:k-2,\:k/2+1,\:k/2$ for $\alpha = {\rm (a)},\:{\rm (b)},\:{\rm (c)},\:{\rm (d)},\:{\rm (e)}$ respectively.
\end{definition}

\begin{proposition}\label{prop:specialtype}
Let $k \geq 3$ be a positive integer, and let $\cH$ be a $k$-uniform hypergraph which belongs to the type $\alpha \in \{ {\rm (a)} , {\rm (b)} , {\rm (c)} , {\rm (d)} , {\rm (e)} \}$.
Then the special $\alpha$-typical index for $\cH$ is $\alpha$-typical.
\end{proposition}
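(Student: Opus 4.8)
The statement is proved by a direct case analysis on $\alpha \in \{{\rm (a)},{\rm (b)},{\rm (c)},{\rm (d)},{\rm (e)}\}$. The point to exploit is that, by Definition~\ref{def:type}, the mere fact that $\cH$ belongs to a given type already pins down the parity of $k$ (or the residue of $k$ modulo $4$): types ${\rm (a)}$ and ${\rm (b)}$ only occur for odd $k$, type ${\rm (c)}$ only for even $k$, type ${\rm (d)}$ only for $k\equiv 0\ ({\rm mod}\ 4)$, and type ${\rm (e)}$ only for $k\equiv 2\ ({\rm mod}\ 4)$. Combined with the explicit values $r^*=k-1,\,1,\,k-2,\,k/2+1,\,k/2$ from Definition~\ref{def:special}, this immediately forces $r^*$ into the congruence class modulo $2$ demanded of an $\alpha$-typical index in Definition~\ref{def:typicalindex}.

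Concretely, I would check: for $\alpha={\rm (a)}$, $k$ is odd so $r^*=k-1$ is even, matching the requirement $r\equiv 0\ ({\rm mod}\ 2)$ for ${\rm (a)}$-typical indices; for $\alpha={\rm (b)}$, $r^*=1$ is odd, matching $r\equiv 1\ ({\rm mod}\ 2)$; for $\alpha={\rm (c)}$, $k$ is even so $r^*=k-2$ is even, matching $r\equiv 0\ ({\rm mod}\ 2)$; for $\alpha={\rm (d)}$, $k\equiv 0\ ({\rm mod}\ 4)$ so $k/2$ is even and $r^*=k/2+1$ is odd, matching $r\equiv 1\ ({\rm mod}\ 2)$; and for $\alpha={\rm (e)}$, $k\equiv 2\ ({\rm mod}\ 4)$ so $k/2$ is odd, hence $r^*=k/2$ is odd, again matching $r\equiv 1\ ({\rm mod}\ 2)$. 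In each case $r^*$ lies in $\{0,\dots,k\}$, so it is a legitimate index, and it satisfies the defining congruence, i.e.\ it is $\alpha$-typical.

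There is essentially no obstacle here: the proof is pure bookkeeping, and the only thing one must be careful about is to invoke, for each type, the correct constraint on $k$ that is built into Definition~\ref{def:type}, rather than treating $k$ as arbitrary. One could even present this more compactly by noting that in all five cases $r^*$ differs from $k$ (for ${\rm (a)},{\rm (c)}$), from $0$ (for ${\rm (b)}$, via $1\equiv -1$ is irrelevant—better to just say $r^*=1$), or from $k/2$ by $0$ or $1$ in the way dictated by $k\bmod 4$; but the straightforward five-line verification above is cleanest and I would simply write it out.
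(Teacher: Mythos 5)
Your proposal is correct and follows essentially the same argument as the paper: a five-case check that the type $\alpha$ forces the parity (or residue mod $4$) of $k$ via Definition~\ref{def:type}, which together with the explicit values of $r^*$ in Definition~\ref{def:special} yields the parity required by Definition~\ref{def:typicalindex}. Nothing further is needed.
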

\begin{proof}
By Definitions~\ref{def:type} and~\ref{def:special}, since $\cH$ belongs to the type $\alpha$, the following hold.
\begin{itemize}
    \item If $\alpha = {\rm (a)}$, then $k$ is odd, so the special index $k-1$ is even. 
    
    \item If $\alpha = {\rm (b)}$, then the special index is $1$ which is odd. 
    
    \item If $\alpha = {\rm (c)}$, then $k$ is even, so the special index $k-2$ is even. 
    
    \item If $\alpha = {\rm (d)}$, then $4 \mid k$, so the special index $k/2 + 1$ is odd.
    
    \item If $\alpha = {\rm (e)}$, then $k \equiv 2 \modu{2}$, so the special index $k/2$ is odd. 
\end{itemize}
Thus, by Definition~\ref{def:typicalindex}, the special $\alpha$-typical index for $\cH$ is $\alpha$-typical.
\end{proof}

\begin{definition}[Divisibility condition]\label{def:div}
Let $n$ be a positive integer divisible by $k$.
Let $(A,B)$ be an ordered pair such that $n = |A|+|B|$. We say $(A,B)$ satisfies the \emph{divisibility condition} with respect to the type $\alpha \in \{ {\rm (a)} , {\rm (b)} , {\rm (c)} , {\rm (d)} , {\rm (e)} \}$ if the following hold.
\begin{itemize}
    \item If $\alpha \in \{ {\rm (a)} , {\rm (c)} \}$, then $|A|$ is even.
    
    \item If $\alpha = {\rm (b)}$, then $|B|$ is even.
    
    \item If $\alpha = {\rm (d)}$, then $\frac{|A| - |B|}{2} \equiv \frac{n}{k}\: ({\rm mod}\:2)$. 
    
    \item If $\alpha = {\rm (e)}$, then $\frac{|A| - |B|}{2}$ is even.
\end{itemize}
\end{definition}

Now we state two ingredients from~\cite{Rodl2009} which we use in the proof of Lemma~\ref{lem:extremal_OM}.
Here we briefly explain how to deduce the following theorem from the proof of~\cite[Lemma 3.1]{Rodl2009}: the hypergraph $\cH$ in~\cite[Lemma 3.1]{Rodl2009} is only assumed to satisfy $\delta_{k-1}(\cH) \geq \delta^0(k,n) + 1$ and that $\cH$ $\eps$-contains either $\cH^0(k,n)$ or $\overline{\cH^0(k,n)}$.
In their proof, they began with slightly modifying the standard ordered partition $(A,B)$ to $(A',B')$ to ensure that  $d_{E_\cH^{r^*}(A',B')}(v) > 0.1 d_{K_{r^*}(A',B')}(v)$ for each $v \in V(\cH)$ and the special $\alpha$-typical index $r^*$ for $\cH$. 
Then they used Facts 4.5--4.8 of~\cite{Rodl2009} which provides an atypical edge $e$, and showed that the partition $(A' \setminus V(e) , B' \setminus V(e))$ satisfies the divisibility condition if $(A',B')$ does not satisfy the divisibility condition.
Since the rest of their proof works for the hypergraphs with minimum codegree at least $n/2 - o(n)$, the minimum degree condition can be relaxed to $\delta_{k-1}(\cH) \geq n/2 - o(n)$ if we further assume that $(A',B')$ satisfies the divisibility condition and that  $d_{E_\cH^{r^*}(A',B')}(v) > 0.1 d_{K_{r^*}(A',B')}(v)$ for each $v \in V(\cH)$, as we stated as below.

\begin{theorem}[\cite{Rodl2009}]\label{thm:perfectmatching_extremal}
Let $1/n \ll \eps \ll 1/k \leq 1/3$ such that $k \mid n$.
Let $A'$ and $B'$ be disjoint sets such that $n = |A'|+|B'|$ and $||A'| - |B'|| \leq \eps n$.
If $\cH$ is a $k$-uniform $n$-vertex hypergraph with an ordered partition $(A',B')$ of $V(\cH)$, then $\cH$ has a perfect matching if the following hold.
\begin{enumerate}[{\rm (i)}]
    \item $\delta_{k-1}(\cH) \geq n/2 - \eps n$.
    
    \item The hypergraph $\cH$ belongs to some type $\alpha \in \{ {\rm (a)} , {\rm (b)} , {\rm (c)} , {\rm (d)} , {\rm (e)} \}$ with respect to $(\eps,A',B')$.
    
    \item\label{cond:div} The ordered partition $(A',B')$ satisfies the divisibility condition with respect to the type $\alpha$.
    
    \item For each vertex $v \in V(\cH)$, $d_{E_\cH^{r^*}(A',B')}(v) > 0.1 d_{\cK_{r^*}(A',B')}(v)$, where $r^*$ is the special $\alpha$-typical index for $\cH$.
\end{enumerate}
\end{theorem}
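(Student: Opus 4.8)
\emph{Plan.} The plan is to obtain the perfect matching by combining a balanced random‑greedy near‑perfect matching with a small \emph{absorbing matching} assembled from edges of the special typical index $r^*$, whose abundance through every vertex is exactly what hypothesis~(iv) provides. It suffices to treat one type, say $\alpha={\rm (a)}$, in which $k$ is odd, $|\cH^0(k,A',B')\setminus\cH|\le\eps n^k$ (so $\cH$ contains all but at most $\eps n^k$ of the $k$‑sets meeting $A'$ in an even number of vertices), the typical indices are the even ones in $\{0,\dots,k-1\}$, and $r^*=k-1$; the remaining four types run identically, with the set of typical indices, the value of $r^*$, and the form of the divisibility condition substituted as in Definitions~\ref{def:typicalindex}, \ref{def:special} and \ref{def:div}, and the elementary index arithmetic of Phase~(II) re‑checked case by case.

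\emph{Preliminaries and target profile.} Set $a=|A'|$, $b=|B'|$, so $a+b=n$ and $|a-b|\le\eps n$. Since at most $\eps n^k$ typical $k$‑sets are missing from $\cH$, a standard averaging argument shows that all but at most $k\eps^{1/2}n^{k-1}$ of the $(k-1)$‑sets $S$ with $|S\cap A'|$ even have $d_\cH(S;B')\ge b-\eps^{1/2}n$ (and symmetrically for odd parity, with $A'$ in place of $B'$), and that all but at most $k\eps^{1/2}n$ vertices lie in at most $\eps^{1/2}n^{k-1}$ missing typical edges; call the exceptional $(k-1)$‑sets and vertices \emph{bad}. Together with hypothesis~(i) this guarantees that every $(k-1)$‑set we ever need to extend has linearly many admissible neighbours. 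Next I would fix a \emph{target profile}: a nonincreasing sequence $r_1,\dots,r_{n/k}$ of even integers in $\{0,\dots,k-1\}$ with $\sum_i r_i=a$, taking all but $O(\eps n)$ of them equal to the even integer nearest $k/2$ and shifting $O(\eps n)$ of the rest by $\pm 2$ to hit $a$ on the nose; this is possible precisely because hypothesis~(iii) forces $a$ to be even and every even integer in $[0,(k-1)n/k]$ is a sum of $n/k$ even integers from $\{0,\dots,k-1\}$. Finally reorder the profile so that the partial sums $\sum_{i\le t}r_i$ and $\sum_{i\le t}(k-r_i)$ stay within $O(\eps n)$ of $\tfrac{t}{n/k}a$ and $\tfrac{t}{n/k}b$ for every $t$; this diagonal ordering keeps both of $A'$ and $B'$ supplied with unused vertices throughout the greedy.

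\emph{Construction.} \textbf{(I) Absorbing matching.} Using hypothesis~(iv), which gives $\Omega(n^{k-1})$ edges of index $r^*$ through \emph{every} vertex, greedily build a matching $\cM_{\mathrm{abs}}$ of size $o(n)$ from $r^*$‑edges and $r^*$‑edge switching gadgets, arranged so that for every $W\subseteq V(\cH)\setminus V(\cM_{\mathrm{abs}})$ with $k\mid|W|$, $|W|\le\eps^{1/4}n$, and $|W\cap A'|$ of the parity prescribed by the gadgets, the hypergraph $\cH[V(\cM_{\mathrm{abs}})\cup W]$ has a perfect matching; the switching gadgets are what let $\cM_{\mathrm{abs}}$ simultaneously absorb $W$ and correct a small discrepancy in how $W$ splits over $(A',B')$. \textbf{(II) Near‑perfect matching.} Run the random‑greedy on $V(\cH)\setminus V(\cM_{\mathrm{abs}})$ along the target profile: at step $i$ pick $r_i$ vertices uniformly from the unused part of $A'$ and $k-r_i$ from the unused part of $B'$, retrying if the chosen $k$‑set is not an edge. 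While $\Omega(n)$ vertices remain unused on each side a random pick is an edge with probability $1-o(1)$ by $\eps$‑closeness, and the diagonal ordering keeps both sides stocked, so the process covers all but a set $W$ with $k\mid|W|$ and $|W|=O(\eps^{1/4}n)$; since bad vertices are rarely completable they naturally end up in $W$, and by adjusting the profile slightly $|W\cap A'|$ has the parity demanded in Phase~(I). \textbf{(III) Absorb} $W$ via $\cM_{\mathrm{abs}}$; together with the matchings from Phases~(I) and (II) this is a perfect matching of $\cH$.

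\emph{Main obstacle.} The heart of the argument is Phase~(I): producing $\cM_{\mathrm{abs}}$ with precisely the right flexibility — able both to swallow an almost balanced leftover and to shift its $A'/B'$ split by a controlled amount — which is where the five types and the specific choices of $r^*$ in Definition~\ref{def:special} really enter, and which demands a careful design of the switching gadgets together with a count, from the single weak hypothesis~(iv), of how many of them meet each vertex. A secondary difficulty is the bookkeeping that forces the near‑perfect matching, the profile shifts, and the bad vertices to leave a leftover $W$ of exactly the parity (and divisibility) the absorbers need; this is the reason the construction allows $W$ to be an arbitrary $O(\eps^{1/4}n)$‑set rather than a set of a prescribed size.
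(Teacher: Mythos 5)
Your plan is a genuinely different route from the paper's. The paper does not reprove this statement at all: it imports the extremal-case analysis of R\"odl, Ruci\'nski, and Szemer\'edi (the proof of their Lemma~3.1, via their partition adjustment and Facts 4.4--4.8), observing that hypotheses (iii) and (iv) are exactly the inputs their direct construction needs once the codegree bound is relaxed to $n/2-\eps n$; no absorption is used there. Replacing this by an absorber-plus-random-greedy scheme would be legitimate if carried out, but your sketch is incomplete at precisely the decisive point, which you yourself label the ``main obstacle'': the construction, and the per-vertex count from hypothesis (iv) alone, of the $r^*$-edge switching gadgets. In the extremal setting this is not a routine instance of the absorbing method, because essentially every available edge is typical with respect to $(A',B')$ (atypical edges may be completely absent), so absorbers are rigidly index-constrained and must in addition be able to shift the $A'/B'$ split of what they absorb; designing such gadgets and showing every vertex lies in many of them is the actual content of the extremal analysis, and it is deferred rather than done.

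Moreover, Phase (I) is false as stated, not merely unproven. Since $\eps$ is a constant, $\eps^{1/4}n=\Theta(n)$, while $|\cM_{\mathrm{abs}}|=o(n)$. Take $\cH=\cH^0(k,A',B')$ with the divisibility condition satisfied (this satisfies (i)--(iv)), $k$ odd, and $W\subseteq A'\setminus V(\cM_{\mathrm{abs}})$ with $|W|=\Theta(n)$, $k\mid |W|$, $|W\cap A'|=|W|$ even. Every edge of $\cH$ meets $A'$ in an even number of vertices, so no edge lies entirely inside $W$; hence in any perfect matching of $\cH[V(\cM_{\mathrm{abs}})\cup W]$ each edge meeting $W$ also meets $V(\cM_{\mathrm{abs}})$ and contains at most $k-1$ vertices of $W$, giving $k|\cM_{\mathrm{abs}}|\ge |W|/(k-1)$, a contradiction. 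So the absorbing matching must have size at least comparable to the worst-case leftover, and you must then guarantee that the greedy phase leaves a remainder much smaller than the absorber, with its intersection pattern with $(A',B')$ inside the exact range the switching gadgets can correct; this quantitative interplay and bookkeeping (the secondary difficulty you mention) is also not supplied. As it stands the proposal is a plan rather than a proof, and the statement is carried by the cited R\"odl--Ruci\'nski--Szemer\'edi argument, not by the sketch.
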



The following lemma shows that there are $\Omega(n^{k-1})$ atypical edges, which follows from the proofs of Facts 4.5--4.8 of~\cite{Rodl2009}.

\begin{lemma}[\cite{Rodl2009}]\label{lem:manyatypical}
Let $1/n \ll c \ll 1/k \leq 1/3$ such that $k \mid n$.
Let $\cH$ be a $k$-uniform $n$-vertex hypergraph such that $\delta_{k-1}(\cH) \geq \delta^0 (k,n) + 1$.
For any partition $\{A',B'\}$ of $V(\cH)$ such that $|A'|,|B'| \geq n/10$, the following hold.

\begin{enumerate}[{\rm (\alph*)}]
\item If $k$ is odd and $|A'|$ is odd, then $|E_\cH^1(A',B') \cup E_\cH^{k-2}(A',B')| \geq c n^{k-1}$.

\item If $k$ is odd and $|B'|$ is odd, then $|E_\cH^{k-1}(A',B') \cup E_\cH^{2}(A',B')| \geq cn^{k-1}$.

\item If $k$ is even, then $|E_\cH^1(A',B') \cup E_\cH^{k-1}(A',B')| \geq cn^{k-1}$.

\item If $k \equiv 0 \modu{4}$ and $\frac{|A'| - |B'|}{2} \not\equiv \frac{n}{k}\: ({\rm mod}\:2)$, then $|E_\cH^2(A',B') \cup E_\cH^{k-2}(A',B')| \geq cn^{k-1}$.

\item If $k \equiv 2 \modu{4}$, then $|E_\cH^2(A',B') \cup E_\cH^{k-2}(A',B')| \geq cn^{k-1}$.
\end{enumerate}
\end{lemma}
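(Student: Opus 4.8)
The plan is to prove all five parts via one double-counting scheme, applied to $(k-1)$-sets that are almost monochromatic with respect to $(A',B')$ — in effect, extracting Facts~4.5--4.8 of~\cite{Rodl2009}. Write $a\coloneqq|A'|$ and $b\coloneqq|B'|$, so $a+b=n$ and $a,b\geq n/10$. The key ingredient is two elementary lower bounds on $|E^r(A',B')|$, valid for every $1\leq r\leq k-1$. If $S$ is a $(k-1)$-set with $|S\cap A'|=r-1$, then each $v\in A'\setminus S$ with $S\cup\{v\}\in\cH$ produces an edge of $E^r(A',B')$, and since $S$ has at most $b-(k-r)$ other possible neighbours (those in $B'\setminus S$), the number of such $v$ is at least $d_\cH(S)-(b-k+r)\geq\delta^0(k,n)+k-r+1-b$. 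Summing over all $\binom{a}{r-1}\binom{b}{k-r}$ such $S$, and noting that each edge of $E^r(A',B')$ is produced exactly $r$ times, yields
\begin{equation*}
    |E^r(A',B')|\geq\frac1r\binom{a}{r-1}\binom{b}{k-r}\bigl(\delta^0(k,n)+k-r+1-b\bigr),
\end{equation*}
while the mirror-image count, over $(k-1)$-sets $S$ with $|S\cap A'|=r$ and their $B'$-neighbours, gives
\begin{equation*}
    |E^r(A',B')|\geq\frac{1}{k-r}\binom{a}{r}\binom{b}{k-1-r}\bigl(\delta^0(k,n)+r+1-a\bigr).
\end{equation*}

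\textbf{Executing the count.} Since $a,b=\Theta(n)$, both binomial products are $\Theta(n^{k-1})$, so to prove $|E^{r_1}(A',B')\cup E^{r_2}(A',B')|\geq cn^{k-1}$ for the two atypical indices $r_1<r_2$ named in a given part, it is enough to produce one choice of index in $\{r_1,r_2\}$ and of one of the two displayed bounds whose final factor is a positive integer; the conclusion then holds for any sufficiently small $c=c(k)$. In each part the two named indices are precisely the two atypical indices closest to $0$ and to $k$ (e.g.\ $1$ and $k-2$ in part~(a); $1$ and $k-1$ in part~(c); $2$ and $k-2$ in parts~(d) and~(e)); taking the first bound for $r_1$ and the second for $r_2$, the two relevant factors sum to $2\delta^0(k,n)-n$ plus a quantity depending only on $k$, and the explicit formula for $\delta^0(k,n)$ shows this sum is a small nonnegative integer (for instance $\in\{0,1,2\}$ when $k$ is odd, depending on $n\bmod4$). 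As $a+b=n$, at least one of the two factors is then $\geq0$, and in the generic situation, where the sum is $\geq1$, one of the factors is $\geq1$ and that part is done. (Parts~(a) and~(b) are moreover equivalent by swapping $A'$ and $B'$, using $E^j(A',B')=E^{k-j}(B',A')$, and for $k$ even the sum turns out to be $\geq2$ in part~(c), so no parity hypothesis is needed there.)

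\textbf{The main obstacle.} The only subtle regime is the near-balanced one $a\approx b\approx n/2$, where the two candidate factors are either both $0$ or are $t$ and $-t$ for some integer $t$, so the plain double count breaks even. Resolving it forces one to combine the exact value of $\delta^0(k,n)$ — which branches on $n\bmod4$ for odd $k$ and on the parities of $n/k$ and $k/2$ for even $k$ — with the parity hypotheses of parts~(a), (b), (d). These hypotheses (``$|A'|$ odd'', ``$|B'|$ odd'', ``$\tfrac{|A'|-|B'|}2\not\equiv\tfrac nk\pmod2$'') are exactly the negations of the divisibility conditions of Definition~\ref{def:div}, and in each case a short parity computation forces the tie-breaking factor to be an odd, hence nonzero, integer, so one factor is again $\geq1$; the analogous bookkeeping, now on $\tfrac{|A'|-|B'|}2$, handles part~(e). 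Thus the bulk of the work is a finite case analysis over the residues of $n$, $n/k$, $k/2$ and $|A'|$, carried out exactly as in Facts~4.5--4.8 of~\cite{Rodl2009}; conceptually, however, everything reduces to the double count above together with the observation that a single unit of codegree beyond $\delta^0(k,n)$ suffices to tip the count in one of the two ``almost monochromatic'' directions.
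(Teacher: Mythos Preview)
Your double-counting scheme is exactly the argument behind Facts~4.5--4.8 of~\cite{Rodl2009}, which is all the paper invokes here (it gives no proof of its own). The two displayed lower bounds on $|E^r(A',B')|$ are correct, and the strategy---show that for one of the two named indices, one of the two ``final factors'' is a positive integer---is the right one. Your treatment of parts~(a)--(d) is sound: for (c) the sum of the two factors is always $\geq 2$, and for (a), (b), (d) the parity hypothesis forces an odd, hence nonzero, tie-breaker exactly as you say.

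There is, however, a genuine gap in part~(e). You write that ``the analogous bookkeeping, now on $\tfrac{|A'|-|B'|}{2}$, handles part~(e)'', but this does not go through: for $k\equiv 2\pmod 4$ one always has $\delta^0(k,n)=n/2+1-k$, so your two candidate factors are precisely $(a-b)/2$ and $(b-a)/2$, and when $a=b$ both vanish with no parity hypothesis available to break the tie. In fact part~(e) \emph{as stated} is false in that case: take $|A'|=|B'|=n/2$ and let $\cH$ consist of all $k$-sets $e$ with $|e\cap A'|$ odd or $|e\cap A'|\in\{0,k\}$; a direct check gives $\delta_{k-1}(\cH)=n/2-k+2=\delta^0(k,n)+1$, yet $E^2(A',B')=E^{k-2}(A',B')=\varnothing$. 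The intended extra hypothesis for~(e) is that $\tfrac{|A'|-|B'|}{2}$ is odd (the negation of the divisibility condition for type~(e) in Definition~\ref{def:div}), and this is the only situation in which the paper actually applies part~(e); under that hypothesis your argument works, since then $|(a-b)/2|\geq 1$. So the defect lies in the lemma's stated hypotheses rather than in your method, but your sketch should have surfaced the issue rather than absorbed it into ``analogous bookkeeping''.
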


Now we are ready to prove Lemma~\ref{lem:extremal_OM}.

\begin{proof}[Proof of Lemma~\ref{lem:extremal_OM}]
Let $1/n \ll \delta \ll \eps \ll \eta \ll 1/k \leq 1/3$.
Since $\cH$ $\eps$-contains either $\cH^0(k, n)$ or $\overline{\cH^0(k, n)}$, there exists a standard partition $( A, B )$ of $V(\cH)$ such that $\cH$ belongs to the type $\alpha$ with respect to $(\eps,A,B)$ for some $\alpha \in \{ {\rm (a)},{\rm (b)},{\rm (c)},{\rm (d)},{\rm (e)} \}$. Let $r^*$ be the special $\alpha$-typical index for $\cH$.
By~\cite[Fact 4.4]{Rodl2009}, there exists an ordered partition $(A',B')$ of $V(\cH)$ such that the following hold.
\begin{enumerate}[label = {$(\text{S}\arabic*)$}, leftmargin= \widthof{S000000}]
    \item\label{S1} $|A \triangle A'| = |B \triangle B'| \leq \eps^{1/2} kn$, and thus $||A'|-|B'|| \leq 2 \eps^{1/2} kn$.\COMMENT{Since $A = V(\cH) \setminus B$ and $A' = V(\cH) \setminus B'$, $A \triangle A' = B \triangle B'$. One can check that $||A'|-|B'|| = ||A' \cap A| + |A' \cap B| - |B' \cap B| - |B' \cap A|| \leq ||A' \cap A| - |B' \cap B|| + |A' \cap B| + |B' \cap A| \leq ||A| - |B|| + |A \triangle A'| \leq 2 \eps^{1/2} kn$, since $|A' \cap B| + |B' \cap A| = |A' \setminus A| + |A \setminus A'| = |A \triangle A'|$.}
    
    \item\label{S2} For each vertex $v \in V(\cH)$, $d_{E_\cH^{r^*}(A',B')}(v) > 0.2 d_{\cK_{r^*}(A',B')}(v) > 0.2 \frac{n^{k-1}}{3^{k-1} (k-1)!}$.\COMMENT{Note that $d_{\cK_{r^*}(A',B')}(v) \geq \frac{\min(|A'|-k+1 , |B'|-k+1)^{k-1}}{(k-1)!} \geq \frac{(n/3)^{k-1}}{(k-1)!}.$}
\end{enumerate}

\begin{claim}\label{claim:epscontain}
$\cH$ belongs to the type $\alpha$ with respect to $(5k\eps^{1/2},A',B')$.
\end{claim}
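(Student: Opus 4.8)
The plan is to reduce the claim to a routine perturbation estimate using Observation~\ref{obs:epscontain_type}. Write $\mathcal{T} \coloneqq \bigcup_{r:\,\alpha\text{-typical}} \cK_r(A,B)$ and $\mathcal{T}' \coloneqq \bigcup_{r:\,\alpha\text{-typical}} \cK_r(A',B')$. Since $\cH$ belongs to the type $\alpha$ with respect to $(\eps,A,B)$, Observation~\ref{obs:epscontain_type} gives $|\mathcal{T}\setminus\cH| \leq \eps n^k$, and by the same observation it suffices to show that $|\mathcal{T}'\setminus\cH| \leq 5k\eps^{1/2}n^k$ (note also that $\mathcal{T}'$ equals either $\cH^0(k,A',B')$ or $\overline{\cH^0(k,A',B')}$, so once the bound is established, the precondition in Definition~\ref{def:type} for considering the type with respect to $(5k\eps^{1/2},A',B')$ is automatically satisfied, and $5k\eps^{1/2}<1$ since $\eps$ is sufficiently small; the parity of $k$ needed for type $\alpha$ is already guaranteed because $\cH$ belongs to type $\alpha$).

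The key point is that $\mathcal{T}$ and $\mathcal{T}'$ differ only on $k$-sets meeting $A\triangle A'$. Indeed, if $e\in\binom{V(\cH)}{k}$ satisfies $e\cap(A\triangle A')=\varnothing$, then $e\cap A = e\cap A'$ (any $v\in e\cap A$ lies in $A'$, since otherwise $v\in A\setminus A'\subseteq A\triangle A'$, and symmetrically), so $|e\cap A| = |e\cap A'|$ and hence $e\in\mathcal{T}$ if and only if $e\in\mathcal{T}'$. Consequently $\mathcal{T}\triangle\mathcal{T}'$ is contained in the set of $k$-sets meeting $A\triangle A'$, whose size is at most $|A\triangle A'|\binom{n-1}{k-1}$. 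By~\ref{S1} we have $|A\triangle A'|\leq\eps^{1/2}kn$, so $|\mathcal{T}\triangle\mathcal{T}'|\leq\eps^{1/2}kn\cdot\binom{n-1}{k-1}\leq\eps^{1/2}kn^k$.

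Combining these, since $\mathcal{T}'\setminus\cH \subseteq (\mathcal{T}'\setminus\mathcal{T})\cup(\mathcal{T}\setminus\cH) \subseteq (\mathcal{T}\triangle\mathcal{T}')\cup(\mathcal{T}\setminus\cH)$, we obtain
\[
|\mathcal{T}'\setminus\cH| \leq |\mathcal{T}\triangle\mathcal{T}'| + |\mathcal{T}\setminus\cH| \leq \eps^{1/2}kn^k + \eps n^k \leq 2\eps^{1/2}kn^k \leq 5k\eps^{1/2}n^k,
\]
using $\eps\leq\eps^{1/2}$. By Observation~\ref{obs:epscontain_type}, this says precisely that $\cH$ belongs to the type $\alpha$ with respect to $(5k\eps^{1/2},A',B')$, proving the claim. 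There is essentially no serious obstacle here: the only point needing care is the elementary verification that an edge's intersection index is unchanged under the switch from $(A,B)$ to $(A',B')$ when the edge avoids the small symmetric difference $A\triangle A'$, together with bookkeeping to stay within the constant $5k\eps^{1/2}$.
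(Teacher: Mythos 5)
Your proposal is correct and follows essentially the same route as the paper: both translate type membership into the count of $\alpha$-typical non-edges via Observation~\ref{obs:epscontain_type} and control the effect of passing from $(A,B)$ to $(A',B')$ by the bound $|A\triangle A'|\leq\eps^{1/2}kn$ from~\ref{S1}. The only cosmetic difference is that you absorb the change into one symmetric-difference term $|\mathcal{T}\triangle\mathcal{T}'|$, whereas the paper splits the same estimate into the two terms $\sum_r|\cK_r(A',B')\setminus\cK_r(A,B)|$ and $\sum_r|E_\cH^r(A,B)\setminus E_\cH^r(A',B')|$, arriving at the same $5k\eps^{1/2}n^k$ bound.
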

\begin{claimproof}
Note that
\begin{align*}
    \sum |\cK_r(A',B') \setminus E_\cH^r(A',B')| &\leq \sum |\cK_r(A',B') \setminus \cK_r(A,B)|\\
    &+ \sum|\cK_r(A,B) \setminus E_\cH^r(A,B)|\\
    &+ \sum|E_\cH^r(A,B) \setminus E_\cH^r(A',B')|.
\end{align*}
where the summations are taken over all $\alpha$-typical indices $r$. 
By Observation~\ref{obs:epscontain_type}, since $\cH$ belongs to the type $\alpha$ with respect to $(\eps, A, B)$, 
the second term in this sum is at most $\eps n^k \leq k \eps^{1/2} n^k$.  By \ref{S1}, the first and third terms in this sum are each at most $2 \eps^{1/2} kn^k$.  Thus, again by Observation~\ref{obs:epscontain_type}, $\cH$ belongs to the type $\alpha$ with respect to $(5k\eps^{1/2},A',B')$, as desired.
\end{claimproof}

\begin{claim}\label{claim:div''}
There are at least $\eps n^{k-1}$ choices of an edge $e^* \in \cH$ such that for each of the choices of $e^*$, the subhypergraph $\cH - V(e^*)$ belongs to the type $\alpha$ with respect to $(6k\eps^{1/2},A'',B'')$, where $A'' \coloneqq A' \setminus V(e^*)$ and $B'' \coloneqq B' \setminus V(e^*)$, 
and the ordered partition $(A'',B'')$ satisfies the divisibility condition with respect to the type $\alpha$.
\end{claim}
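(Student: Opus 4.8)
The plan is to split into two cases according to whether $(A',B')$ already satisfies the divisibility condition with respect to $\alpha$. If it does, then we may take $e^*$ to be \emph{any} $\alpha$-typical edge of $\cH$ whose vertex set is distributed evenly enough between $A'$ and $B'$ that removing its vertices preserves the parity/divisibility constraints; more precisely, for an $\alpha$-typical index $r$, removing an edge $e^* \in E^r_\cH(A',B')$ changes $|A'|$ by $r$ and $|B'|$ by $k-r$. A short case check using Definition~\ref{def:div} (parity of $|A|$, parity of $|B|$, or the residue of $(|A|-|B|)/2$ modulo~$2$) shows which value of $r$ preserves the divisibility condition; in each of the five types one of the two `special' $\alpha$-typical residues works. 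Since $\cH$ belongs to type $\alpha$ with respect to $(5k\eps^{1/2}, A', B')$ by Claim~\ref{claim:epscontain}, Observation~\ref{obs:epscontain_type} gives that $|E^r_\cH(A',B')| \geq |\cK_r(A',B')| - 5k\eps^{1/2}n^k$, and since $|A'|, |B'| = n/2 \pm 2\eps^{1/2}kn$ we have $|\cK_r(A',B')| = \Omega(n^k)$; hence there are $\Omega(n^k) \geq \eps n^{k-1}$ such edges. (In fact we get many more than $\eps n^{k-1}$, but this suffices.)

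If $(A',B')$ does \emph{not} satisfy the divisibility condition, then we use Lemma~\ref{lem:manyatypical}: since $|A'|, |B'| = n/2 \pm 2\eps^{1/2}kn \geq n/10$ and $\delta_{k-1}(\cH) \geq \delta^0(k,n) + 1$, the relevant part (a)--(e) of that lemma — chosen according to the type $\alpha$ and the precise way in which divisibility fails — guarantees $\geq c n^{k-1}$ atypical edges of a prescribed index (where $1/n \ll c \ll 1/k$, so $c n^{k-1} \geq \eps n^{k-1}$). For each such atypical edge $e^*$, say $e^* \in E^r_\cH(A',B')$ with $r$ an $\alpha$-atypical index appearing in the conclusion of Lemma~\ref{lem:manyatypical}, removing $V(e^*)$ changes $(|A'|,|B'|)$ to $(|A'|-r, |B'|-k+r)$, and one checks directly from Definition~\ref{def:div} that this repairs the divisibility condition — this is exactly the parity bookkeeping carried out in~\cite[Facts 4.5--4.8]{Rodl2009}, which is precisely why those particular atypical indices appear in Lemma~\ref{lem:manyatypical}. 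So in both cases we obtain $\geq \eps n^{k-1}$ valid choices of $e^*$ with $(A'',B'')$ satisfying the divisibility condition.

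It remains to verify that $\cH - V(e^*)$ belongs to type $\alpha$ with respect to $(6k\eps^{1/2}, A'', B'')$. Deleting the $k$ vertices of $e^*$ removes at most $k\binom{n}{k-1} \leq n^{k-1}$ edges from $\cH$, and for each $\alpha$-typical index $r$ we have $|\cK_r(A'',B'') \setminus \cK_r(A',B')| = 0$ while $|\cK_r(A',B') \setminus \cK_r(A'',B'')| \leq k n^{k-1}$; combining with Claim~\ref{claim:epscontain} and Observation~\ref{obs:epscontain_type},
\begin{equation*}
    \sum_{r:\,\alpha\text{-typical}} |\cK_r(A'',B'') \setminus E^r_{\cH - V(e^*)}(A'',B'')| \leq 5k\eps^{1/2}n^k + k^2 n^{k-1} \leq 6k\eps^{1/2} n^k
\end{equation*}
for $n$ large, as required. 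The main obstacle is purely the bookkeeping in the two cases — getting the parity of $|A'|$, $|B'|$, and $(|A'|-|B'|)/2$ to line up correctly after deleting an edge of each prescribed index, separately for all five types $\mathrm{(a)}$--$\mathrm{(e)}$ — but this is routine and is essentially imported from the corresponding facts in~\cite{Rodl2009}. Everything else (the counting of typical or atypical edges, and the robustness of the $\eps$-containment under deleting $k$ vertices) is immediate from Claim~\ref{claim:epscontain}, Observation~\ref{obs:epscontain_type}, and Lemma~\ref{lem:manyatypical}.
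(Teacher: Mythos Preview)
Your proof is correct and follows essentially the same two-case split as the paper: handle the case where $(A',B')$ already satisfies the divisibility condition by choosing an $\alpha$-typical edge of an appropriate index, and otherwise invoke Lemma~\ref{lem:manyatypical} to find $\geq \eps n^{k-1}$ atypical edges whose removal repairs divisibility; the preservation of the type under deleting $k$ vertices is handled the same way in both.

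The one noteworthy difference is in the first case. The paper uses the specific special index $r^*$ from Definition~\ref{def:special} and appeals to property~\ref{S2} (which guarantees every vertex lies in many edges of $E^{r^*}_\cH(A',B')$) to get $\geq \eps n^{k-1}$ suitable edges. You instead count directly: since $\cH$ belongs to type $\alpha$ with respect to $(5k\eps^{1/2},A',B')$, for any fixed $\alpha$-typical $r$ one has $|E^r_\cH(A',B')| \geq |\cK_r(A',B')| - 5k\eps^{1/2}n^k = \Omega(n^k)$. This is equally valid (and in fact yields many more edges); it just bypasses the need for~\ref{S2} at this step. Your phrasing ``one of the two `special' $\alpha$-typical residues works'' is a bit loose---in fact the case check shows that \emph{every} $\alpha$-typical index preserves divisibility in each of the five types---but this only strengthens your conclusion.
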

\begin{claimproof}
    By Claim~\ref{claim:epscontain} and Observation~\ref{obs:epscontain_type}, $\cH - V(e)$ belongs to the type $\alpha$ with respect to $(6k \eps^{1/2}, A \setminus V(e), B\setminus V(e))$ for every $e \in \cH$, so it suffices to show that there are at least $\eps n^{k - 1}$ choices of an edge $e^*$ such that $(A' \setminus V(e^*), B' \setminus V(e^*))$ satisfies the divisibility condition with respect to $\alpha$. 
    
    First, if $(A', B')$ satisifies the divisibility condition for $\alpha$, then by the choice of the special typical index $r^*$, it is easy to see that the ordered partition $(A' \setminus V(e^*), B' \setminus V(e^*))$ satisfies the divisibility condition for every $e^* \in E_\cH^{r^*}(A',B')$.  In this case, \ref{S2} implies that there are sufficiently many choices for $e^*$.
    
    Thus, we may assume $(A', B')$ does not satisfy the divisibility condition.  Let
    \begin{equation*}
        E^* \coloneqq \left\{\begin{array}{l l}
        E_\cH^1(A',B') \cup E_\cH^{k-2}(A',B') & \text{ if $\alpha = {\rm (a)}$},\\
        E_\cH^{k-1}(A',B') \cup E_\cH^{2}(A',B') & \text{ if $\alpha = {\rm (b)}$},\\
        E_\cH^1(A',B') \cup E_\cH^{k-1}(A',B') & \text{ if $\alpha = {\rm (c)}$},\\
        E_\cH^2(A',B') \cup E_\cH^{k-2}(A',B') & \text{ if $\alpha \in \{{\rm (d), {\rm (e)}\}}$}.
        \end{array}\right.
    \end{equation*}
    Since $(A', B')$ does not satisfy the divisibility condition, it is also easy to see that in all cases of $\alpha$, the ordered partition $(A' \setminus V(e^*), B' \setminus V(e^*))$ satisfies the divisibility condition for every $e^* \in E^*$.  Moreover, by Lemma~\ref{lem:manyatypical}, we have $|E^*| \geq \eps n^{k - 1}$, so there are sufficiently many choices for $e^*$, as desired.
    \COMMENT{{\bf Case $\alpha = {\rm (a)}$}.
    In this case $k$ is odd and $r^* = k-1$ by Definitions~\ref{def:type} and~\ref{def:special}.
    If $|A'|$ is even, then by~\ref{S2}, there are at least $\eps n^{k-1}$ edges in $E_\cH^{r^*}(A',B')$ incident to a vertex $v \in V(\cH)$. We choose $e^* \in E_\cH^{r^*}(A',B')$. Since $r^* = k-1$ is even, $|A'| \equiv r^* = |e^* \cap A'|\:({\rm mod}\:2)$. 
    Otherwise if $|A'|$ is odd, by Lemma~\ref{lem:manyatypical},  $|E_\cH^1(A',B') \cup E_\cH^{k-2}(A',B')| \geq \eps n^{k-1}$, so we choose $e^* \in E_\cH^1(A',B') \cup E_\cH^{k-2}(A',B')$. Since $|e^* \cap A'| \in \{ 1 , k-2 \}$ and $k$ is odd, we have $|A'| \equiv 1 \equiv |e^* \cap A'|\:({\rm mod}\:2)$.
    Thus, for both cases, $|A''| = |A'| - |e^* \cap A'| \equiv 0\:({\rm mod}\:2)$, so $(A'',B'')$ satisfies the divisibility condition with respect to the type $\rm (a)$.
{\bf Case $\alpha = {\rm (b)}$}. 
    In this case $k$ is odd and $r^* = 1$ by Definitions~\ref{def:type} and~\ref{def:special}. If $|B'|$ is even, then by~\ref{S2}, there are at least $\eps n^{k-1}$ edges in $E_\cH^{r^*}(A',B')$ incident to a vertex $v \in V(\cH)$. We choose $e^* \in E_\cH^{r^*}(A',B')$. Note that $|B'| \equiv k-1 = k-r^* = |e^* \cap B'|\:({\rm mod}\:2)$ since $k$ is odd. 
    Otherwise if $|B'|$ is odd, by Lemma~\ref{lem:manyatypical},  $|E_\cH^{k-1}(A',B') \cup E_\cH^{2}(A',B')| \geq \eps n^{k-1}$, so we choose $e^* \in E_\cH^{k-1}(A',B') \cup E_\cH^{2}(A',B')$. Since $|e^* \cap B'| \in \{ 1, k-2 \}$ and $k$ is odd, 
    we have $|B'| \equiv 1 \equiv |e^* \cap B'|\:({\rm mod}\:2)$.
    Thus, for both cases, $|B''| = |B'| - |e^* \cap B'| \equiv 0\:({\rm mod}\:2)$, so $(A'',B'')$ satisfies the divisibility condition with respect to the type $\rm (b)$.
{\bf Case $\alpha = {\rm (c)}$}.
    In this case, $k$ is even and $r^* = k-2$ by Definitions~\ref{def:type} and~\ref{def:special}.
    If $|A'|$ is even, then by~\ref{S2}, there are at least $\eps n^{k-1}$ edges in $E_\cH^{r^*}(A',B')$ incident to a vertex $v \in V(\cH)$. We choose $e^* \in E_\cH^{r^*}(A',B')$. Since $r^* = k-2$ is even, $|A'| \equiv r^* = |e^* \cap A'|\:({\rm mod}\:2)$.
    Otherwise if $|A'|$ is odd, by Lemma~\ref{lem:manyatypical}, $|E_\cH^1(A',B') \cup E_\cH^{k-1}(A',B')| \geq \eps n^{k-1}$, so we choose $e^* \in E_\cH^1(A',B') \cup E_\cH^{k-1}(A',B')$. Since $|e^* \cap A'| \in \{1 , k-1 \}$ and $k$ is even, we have $|A'| \equiv 1 \equiv |e^* \cap A'|\:({\rm mod}\:2)$.
    Thus, for both cases, $|A''| = |A'| - |e^* \cap A'| \equiv 0\:({\rm mod}\:2)$, so $(A'',B'')$ satisfies the divisibility condition with respect to the type $\rm (c)$.
{\bf Case $\alpha = {\rm (d)}$}.
    In this case, $k \equiv 0 \modu{4}$ and $r^* = k/2 + 1$ is odd by Definitions~\ref{def:type} and~\ref{def:special}.
    If $\frac{|A'| - |B'|}{2} \equiv \frac{n}{k}\:({\rm mod}\:2)$ then by~\ref{S2}, there are at least $\eps n^{k-1}$ edges in $E_\cH^{r^*}(A',B')$ incident to a vertex $v \in V(\cH)$. We choose $e^* \in E_\cH^{r^*}(A',B')$. Then $\frac{|A''|-|B''|}{2} = \frac{|A'|-|B'|}{2} - 1 \equiv \frac{n}{k} - 1 \equiv \frac{n-k}{k}\:({\rm mod}\:2)$, so $(A'',B'')$ satisfies the divisibility condition with respect to the type $\rm (d)$.
    Otherwise if $\frac{|A'| - |B'|}{2} \not\equiv \frac{n}{k}\:({\rm mod}\:2)$, by Lemma~\ref{lem:manyatypical}, $|E_\cH^2(A',B') \cup E_\cH^{k-2}(A',B')| \geq \eps n^{k-1}$. Thus, we choose $e^* \in E_\cH^2(A',B') \cup E_\cH^{k-2}(A',B')$.
    Since $4 \mid k$, we have $\frac{|A''|-|B''|}{2} \equiv \frac{|A'|-|B'| + k - 4}{2} \equiv \frac{|A'|-|B'|}{2} \equiv \frac{n}{k} - 1 =  \frac{n-k}{k}\:({\rm mod}\:2)$, so $(A'',B'')$ satisfies the divisibility condition with respect to the type $\rm (d)$.
{\bf Case $\alpha = {\rm (e)}$}. 
    In this case, $k \equiv 2 \modu{4}$ and $r^* = k/2$ is odd by Definitions~\ref{def:type} and~\ref{def:special}.
    If $\frac{|A'| - |B'|}{2}$ is even then by~\ref{S2}, there are at least $\eps n^{k-1}$ edges in $E_\cH^{r^*}(A',B')$ incident to a vertex $v \in V(\cH)$. We choose $e^* \in E_\cH^{r^*}(A',B')$. Then $\frac{|A''|-|B''|}{2} = \frac{|A'|-|B'|}{2} \equiv 0\:({\rm mod}\:2)$, so $(A'',B'')$ satisfies the divisibility condition with respect to the type $\rm (e)$.
    Otherwise if $\frac{|A'| - |B'|}{2}$ is odd, by Lemma~\ref{lem:manyatypical}, $|E_\cH^2(A',B') \cup E_\cH^{k-2}(A',B')| \geq \eps n^{k-1}$. We choose $e^* \in E_\cH^2(A',B') \cup E_{\cH}^{k-2}(A',B')$. 
    Since $k \equiv 2 \modu{4}$, we have $\frac{|A''|-|B''|}{2} \equiv \frac{|A'|-|B'| + k - 4}{2} \equiv \frac{|A'|-|B'|}{2} + 1 \equiv 0\:({\rm mod}\:2)$, so $(A'',B'')$ satisfies the divisibility condition with respect to the type $\rm (e)$.}
\end{claimproof}

Now we fix $e^* \in \cH$ satisfying Claim~\ref{claim:div''}.
Let us define
\begin{align}\label{def:h'}
    \cH' \coloneqq 
    \begin{cases}
        \cH \cap \cH^0(k,A'',B'') & \text{if } \alpha \in \{ {\rm (a)}, {\rm (d)}, {\rm (e)} \},\\
        \cH \cap \overline{\cH^0(k,A'',B'')} & \text{if } \alpha \in \{ {\rm (b)}, {\rm (c)} \}.
    \end{cases}
\end{align}

Thus, the subhypergraph $\cH'$ is the collection of the $\alpha$-typical edges in $\cH - V(e^*)$ with respect to $(A'',B'')$. Since $\cH - V(e^*)$ belongs to the type $\alpha$ with respect to $(6k\eps^{1/2},A'',B'')$ by Claim~\ref{claim:div''} and Observation~\ref{obs:epscontain_type},\COMMENT{we have
\begin{itemize}
    \item $|\cH^0(k,A'',B'') \setminus \cH'| \leq 6k\eps^{1/2}n^k$ if  $\alpha \in \{ {\rm (a)}, {\rm (d)}, {\rm (e)} \}$ and
    \item $|\overline{\cH^0(k,A'',B'')} \setminus \cH'| \leq 6k\eps^{1/2}n^k$ if $\alpha \in \{ {\rm (b)}, {\rm (c)} \}$,
\end{itemize}}
$\cH'$ also belongs to the type $\alpha$ with respect to $(6k\eps^{1/2} , A'' , B'')$.

\begin{claim}\label{claim:proph'}
The hypergraph $\cH'$ satisfies the following properties.
\begin{itemize}
    \item At least a $(1-\eps^{1/6})$-fraction of $(k-1)$-sets $S \in \binom{A'' \cup B''}{k-1}$ satisfy $d_{\cH'}(S) \geq n/2 - 10\eps^{1/6} kn$.
    
    \item For each vertex $v \in A'' \cup B''$, $d_{E_{\cH'}^{r^*}(A'',B'')}(v) > 0.15 d_{\cK_{r^*}(A'',B'')}(v) \geq 0.15 \frac{n^{k-1}}{3^{k-1} (k-1)!}$.\COMMENT{Note that $d_{\cK_{r^*}(A'',B'')}(v) \geq \frac{\min(|A''|-k+1 , |B''|-k+1)^{k-1}}{(k-1)!} \geq \frac{(n/3)^{k-1}}{(k-1)!}.$}
\end{itemize}
In particular, since $\eps \ll \eta$, $\cH'$ is $(1/2 - \eta ,\: \frac{0.15}{3^{k-1}(k-1)!}, \: k - 1 ,\: \eta)$-dense.
\end{claim}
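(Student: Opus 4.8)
The plan is to compare $\cH'$ with the ``ideal'' hypergraph $\widehat{\cH} \coloneqq \bigcup_{r\colon\alpha\text{-typical}}\cK_r(A'',B'')$ on the vertex set $A''\cup B''$, which has $n-k$ vertices. By Observation~\ref{obs:epscontain_type} together with~\eqref{def:h'} we have $\cH' = \cH\cap\widehat{\cH}$, and since $\cH'$ belongs to type $\alpha$ with respect to $(6k\eps^{1/2},A'',B'')$ (established just above the claim), Observation~\ref{obs:epscontain_type} gives $|\widehat{\cH}\setminus\cH'| \le 6k\eps^{1/2}n^k$. I would also record that $\min(|A''|,|B''|)\ge n/2 - 4\eps^{1/2}kn$, which follows from~\ref{S1} and $|V(e^*)|=k$.

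For the first bullet I would first observe that every $S\in\binom{A''\cup B''}{k-1}$ satisfies $d_{\widehat{\cH}}(S)\ge\min(|A''|,|B''|)-(k-1)\ge n/2 - 5\eps^{1/2}kn$: writing $j=|S\cap A''|$, exactly one of $j$ and $j+1$ has the $\alpha$-typical parity, so either every vertex of $A''\setminus S$, or every vertex of $B''\setminus S$, extends $S$ to an edge of $\widehat{\cH}$. Then I would distribute the edge deficiency across codegrees: $\sum_{S} d_{\widehat{\cH}\setminus\cH'}(S) = k|\widehat{\cH}\setminus\cH'| \le 6k^2\eps^{1/2}n^k$, so by Markov's inequality all but at most $O(\eps^{1/3}n^{k-1})$ sets $S$ satisfy $d_{\widehat{\cH}\setminus\cH'}(S)\le 7\eps^{1/6}kn$; since $\binom{n-k}{k-1}=\Omega(n^{k-1})$ and $\eps\ll 1/k$, this exceptional set has size at most $\eps^{1/6}\binom{n-k}{k-1}$. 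For every non-exceptional $S$ this gives $d_{\cH'}(S)\ge d_{\widehat{\cH}}(S)-d_{\widehat{\cH}\setminus\cH'}(S)\ge n/2 - 10\eps^{1/6}kn$, using $\eps^{1/2}\ll\eps^{1/6}$.

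For the second bullet, since $r^*$ is $\alpha$-typical by Proposition~\ref{prop:specialtype} we have $\cK_{r^*}(A'',B'')\subseteq\widehat{\cH}$, hence $E_{\cH'}^{r^*}(A'',B'')=\cH\cap\cK_{r^*}(A'',B'')=E_\cH^{r^*}(A'',B'')$. Passing from $(A',B')$ to $(A'',B'')$ only deletes the $k$ vertices of $e^*$ and keeps each surviving vertex on the same side, so $\cK_{r^*}(A'',B'')=\{e\in\cK_{r^*}(A',B')\colon e\cap V(e^*)=\varnothing\}$; thus for each $v\in A''\cup B''$ the number of $\cK_{r^*}(A',B')$-edges through $v$ that meet $V(e^*)$ is $O(n^{k-2})$, which by~\ref{S2} (which gives $d_{\cK_{r^*}(A',B')}(v)>n^{k-1}/(3^{k-1}(k-1)!)$) is at most $0.05\,d_{\cK_{r^*}(A',B')}(v)$ for $n$ large. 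Therefore $d_{E_{\cH'}^{r^*}(A'',B'')}(v)\ge d_{E_\cH^{r^*}(A',B')}(v)-O(n^{k-2}) > 0.2\,d_{\cK_{r^*}(A',B')}(v) - 0.05\,d_{\cK_{r^*}(A',B')}(v) = 0.15\,d_{\cK_{r^*}(A',B')}(v)\ge 0.15\,d_{\cK_{r^*}(A'',B'')}(v)$, using $\cK_{r^*}(A'',B'')\subseteq\cK_{r^*}(A',B')$. For the stated absolute lower bound I would count $d_{\cK_{r^*}(A'',B'')}(v)$ directly as a product of two binomial coefficients; since $1\le r^*\le k-1$ in every case (Definition~\ref{def:special}) and $\min(|A''|,|B''|)-k\ge n/3$, bounding each factor from below and using $\binom{k-1}{r^*-1}\ge1$ (respectively $\binom{k-1}{r^*}\ge1$) yields $d_{\cK_{r^*}(A'',B'')}(v)\ge (n/3)^{k-1}/(k-1)! = n^{k-1}/(3^{k-1}(k-1)!)$. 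The ``dense'' conclusion then follows at once since $|V(\cH')|=n-k$ and $\eps\ll\nu\ll 1/k$: the first bullet supplies the codegree condition of $(1/2-\nu,\cdot,k-1,\nu)$-density, and the second gives $d_{\cH'}(v)\ge\frac{0.15}{3^{k-1}(k-1)!}(n-k)^{k-1}$ for every $v$.

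I do not expect a genuine obstacle here; the only point that needs care is the bookkeeping across the three nested layers of approximation --- from the ideal $\widehat{\cH}$ to $\cH$, from $(A',B')$ to $(A'',B'')$, and the $\eps$-closeness --- and in particular checking that spreading the $O(\eps^{1/2}n^k)$ edge deficiency of $\cH'$ over the $\Omega(n^{k-1})$ many $(k-1)$-codegrees really does leave only an $\eps^{1/6}$-fraction of $(k-1)$-sets below the asserted threshold $n/2-10\eps^{1/6}kn$.
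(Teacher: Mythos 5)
Your proposal is correct and follows essentially the same route as the paper: both bullets are handled by comparing $\cH'$ with the ideal hypergraph $\cH^0(k,A'',B'')$ (resp.\ its complement), spreading the at most $6k\eps^{1/2}n^k$ missing edges over the $(k-1)$-sets to get the codegree statement, and, for the $r^*$-degree statement, subtracting the at most $kn^{k-2}$ edges through $V(e^*)$ from the bound supplied by \ref{S2}. The only cosmetic difference is that you apply Markov's inequality to the missing-edge codegrees $d_{\cH^0(k,A'',B'')\setminus\cH'}(S)$, whereas the paper runs the equivalent averaging as a two-sided count of $k\,e(\cH')$, additionally using the maximum codegree of $\cH^0(k,A'',B'')$; both give the same $(1-\eps^{1/6})$-fraction conclusion.
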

\begin{claimproof}
Without loss of generality, we may assume that $\alpha \in \{ {\rm (a)}, {\rm (d)}, {\rm (e)} \}$. For the other case $\alpha \in \{ {\rm (b)}, {\rm (c)} \}$, we can just switch the role of $\cH^0(k,A'',B'')$ and $\overline{\cH^0(k,A'',B'')}$.

For any $k-1$ distinct vertices $v_1 , \dots , v_{k-1} \in A'' \cup B''$, depending on the parity of $|A'' \cap \{v_1 , \dots , v_{k-1} \}|$, $d_{\cH^0(k,A'',B'')}(v_1 , \dots , v_{k-1})$ is either $|A'' \setminus \{v_1 , \dots , v_{k-1} \}|$ or $|B'' \setminus \{v_1 , \dots , v_{k-1} \}|$. Thus, since by \ref{S1}, $\min \{ |A''|,|B''| \} \geq \min \{ |A'|,|B'| \} - k \geq n/2 - 2\eps^{1/2}kn - k$ and $\max \{ |A''|,|B''| \} \leq \max \{ |A'|,|B'| \} \leq n/2 + 2 \eps^{1/2} kn$, we have
\begin{itemize}
    \item $\delta_{k-1}(\cH^0(k,A'',B'')) \geq \min \{ |A''|,|B''| \} - (k-1) \geq n/2 - 3 \eps^{1/2}kn$, and 
    
    \item $\Delta_{k-1}(\cH^0(k,A'',B'')) \leq \max \{ |A''|,|B''| \} \leq n/2 + 2 \eps^{1/2}kn$,    
\end{itemize}
where $\Delta_{k-1}(\cH^0(k,A'',B'')) \coloneqq \max \{ d_{\cH^0(k,A'',B'')}(S) : S \in \binom{A'' \cup B''}{k-1} \}$ is the maximum codegree of $\cH^0(k,A'',B'')$.

Since $\cH' \subseteq \cH^0(k,A'',B'')$, every $(k-1)$-set $S \in \binom{A'' \cup B''}{k-1}$ satisfies $d_{\cH'}(S) \leq \Delta_{k-1}(\cH^0(k,A'',B'')) \leq n/2 + 2 \eps^{1/2} kn$.
Let $N$ be the number of $(k-1)$-sets $S \in \binom{A'' \cup B''}{k-1}$ such that $d_{\cH'}(S) \geq n/2 - 10 \eps^{1/6} kn$.
Since $|\cH^0(k,A'',B'') \setminus \cH'| \leq 6k\eps^{1/2}n^k$,
\begin{align*}
    ke(\cH') \geq ke(\cH^0(k,A'',B'')) - 6k^2 \eps^{1/2} n^k &\geq \binom{|A'' \cup B''|}{k-1} \delta_{k-1}(\cH^0(k,A'',B'')) - 6k^2 \eps^{1/2} n^k\\
    &\geq \binom{|A'' \cup B''|}{k-1} \left(n/2 - 3\eps^{1/2} kn - 2(k - 1)!6k^2 \eps^{1/2}n\right).
\end{align*}
On the other hand,
\begin{align*}
    ke(\cH') = \sum_{S \in \binom{A'' \cup B''}{k-1}} d_{\cH'}(S) &\leq \left ( \binom{|A'' \cup B''|}{k-1} - N \right) \left(n/2 - 10\eps^{1/6} kn\right) + N \left(n/2 + 2\eps^{1/2} kn \right) \\
    & = \binom{|A'' \cup B''|}{k-1} \left(n/2 - 10\eps^{1/6} kn\right) + N \left(10 \eps^{1/6}k + 2 \eps^{1/2} k\right)n.
\end{align*}
Combining both inequalities, since $\eps \ll 1/k$,
\begin{align*}
    N &\geq \binom{|A'' \cup B''|}{k-1} \frac{10 \eps^{1/6} k - 3\eps^{1/2} k - 2(k-1)! 6k^2 \eps^{1/2} }{10 \eps^{1/6}k + 2 \eps^{1/2} k} \geq \binom{|A'' \cup B''|}{k-1} \frac{10\eps^{1/6}k - \eps^{1/3} k}{10\eps^{1/6}k}\\ 
    & > (1 - \eps^{1/6}) \binom{|A'' \cup B''|}{k-1},
\end{align*}
as desired.

Note that $r^*$ is the special $\alpha$-typical index for the hypergraphs $\cH$, $\cH - V(e^*)$, and $\cH'$. 
Since $\cH'$ is the subhypergraph of typical edges of $\cH - V(e^*)$, we have $E_{\cH'}^{r^*}(A'',B'') = E_{\cH - V(e^*)}^{r^*}(A'',B'')$. Moreover, since $|A' \setminus A''| + |B' \setminus B''| = |V(e^*)| = k$, we have $d_{E_{\cH'}^{r^*}(A'',B'')}(v) \geq d_{E_{\cH}^{r^*}(A',B')}(v) - kn^{k-2}$ for each vertex $v \in A'' \cup B''$. Thus, by~\ref{S1}, we have $d_{E_{\cH'}^{r^*}(A'',B'')}(v) > 0.15 d_{\cK_{r^*}(A'',B'')}(v)$ as desired.
\end{claimproof}

\begin{claim}\label{claim:divfinal}
Let $M'$ be a matching in $\cH'$ such that $|M'| \equiv 0\:({\rm mod}\:2)$ if $\alpha \in \{ {\rm (d)} , {\rm (e)} \}$. Then the ordered partition $(A'' \setminus V(M') , B'' \setminus V(M'))$ satisfies the divisibility condition with respect to $\alpha$.
\end{claim}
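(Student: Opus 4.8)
The plan is to prove the claim by tracking how the parities of the relevant set sizes change when $V(M')$ is deleted, starting from the fact, provided by Claim~\ref{claim:div''}, that $(A'', B'')$ already satisfies the divisibility condition with respect to $\alpha$ for the hypergraph $\cH - V(e^*)$, which has $n - k$ vertices. First I would write $M' = \{f_1, \dots, f_t\}$ with $t \coloneqq |M'|$ and note that, since each $f_i$ is an edge of $\cH'$, by~\eqref{def:h'} and Observation~\ref{obs:epscontain_type} it is $\alpha$-typical with respect to $(A'', B'')$, so $r_i \coloneqq |f_i \cap A''|$ is an $\alpha$-typical index. Setting $a \coloneqq |V(M') \cap A''| = \sum_{i=1}^t r_i$ and $b \coloneqq |V(M') \cap B''| = kt - a$, we have $|A'' \setminus V(M')| = |A''| - a$ and $|B'' \setminus V(M')| = |B''| - b$, and the number of vertices drops from $n - k$ to $n - k(t+1)$, still a multiple of $k$, so the divisibility condition for $(A'' \setminus V(M'), B'' \setminus V(M'))$ is well-posed.

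Next I would split into the parity regimes of the $\alpha$-typical indices as in Definition~\ref{def:typicalindex}. For $\alpha \in \{{\rm (a)}, {\rm (c)}\}$ every $\alpha$-typical index is even, so $a$ is even; since $|A''|$ is even by Claim~\ref{claim:div''}, we get that $|A'' \setminus V(M')| = |A''| - a$ is even, which is the required divisibility condition. For $\alpha = {\rm (b)}$, $k$ is odd and every $\alpha$-typical index is odd, so each $k - r_i$ is even and hence $b$ is even; since $|B''|$ is even by Claim~\ref{claim:div''}, we get that $|B'' \setminus V(M')|$ is even, as required.

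The cases $\alpha \in \{{\rm (d)}, {\rm (e)}\}$ are where the bookkeeping matters, and this is the step I expect to be the only real obstacle. Here $k$ is even and every $\alpha$-typical index is odd, so $a = \sum_{i=1}^t r_i \equiv t \pmod{2}$; writing $\tfrac{a-b}{2} = a - \tfrac{kt}{2}$ (an integer, as $k$ is even), one checks that $\tfrac{kt}{2}$ is even when $k \equiv 0 \pmod{4}$ and $\equiv t \pmod{2}$ when $k \equiv 2 \pmod{4}$, so $\tfrac{a-b}{2} \equiv t \pmod{2}$ in case ${\rm (d)}$ and $\equiv 0 \pmod{2}$ in case ${\rm (e)}$. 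Since $\tfrac{|A'' \setminus V(M')| - |B'' \setminus V(M')|}{2} = \tfrac{|A''| - |B''|}{2} - \tfrac{a-b}{2}$, for ${\rm (e)}$ this is even because $\tfrac{|A''| - |B''|}{2}$ is even by Claim~\ref{claim:div''}; for ${\rm (d)}$ it is congruent to $\tfrac{|A''| - |B''|}{2} - t \equiv \tfrac{n-k}{k} - t \equiv \tfrac{n - k(t+1)}{k} \pmod{2}$, again using Claim~\ref{claim:div''}, which is exactly the divisibility condition with respect to ${\rm (d)}$ for the vertex count $n - k(t+1)$. The only thing to be careful about is keeping straight which value of ``$n$'' enters the divisibility condition at each stage, and observing that for type ${\rm (d)}$ the shift $-t$ in the target residue exactly matches the shift in $\tfrac{|A''| - |B''|}{2}$ caused by deleting $V(M')$; the parity assumption on $|M'|$ in the statement is inherited from the intended application to OM-stability and, as this computation shows, is not actually needed for the claim itself.
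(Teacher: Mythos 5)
Your proof is correct and takes essentially the same route as the paper: in cases (a)--(c) the typicality of the edges of $\cH'$ preserves the parity of $|A''|$ (resp.\ $|B''|$), and in cases (d), (e) one tracks $\frac{|A''|-|B''|}{2}$ modulo $2$, exactly as the paper does. Your side remark also checks out: the paper invokes $2 \mid |M'|$ so that both $\frac{|A''|-|B''|}{2}$ and (in case (d)) the target residue $\frac{n}{k}$ stay unchanged, whereas your computation shows that in case (d) the two shifts by $|M'|$ cancel and in case (e) $\frac{a-b}{2}$ is automatically even, so the parity hypothesis is indeed not needed for this particular claim (though asserting it is harmless, since the application only ever supplies matchings of even size).
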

\begin{claimproof}
By Claim~\ref{claim:div''}, $(A'',B'')$ satisfies the divisibility condition with respect to the type $\alpha$. Now we divide the cases according to the type $\alpha$.

{\bf Case $\alpha \in \{ {\rm (a)} , {\rm (c)} \}$}. Since $|e \cap A''| \equiv 0\:({\rm mod}\:2)$ for each $e \in \cH'$, we have $|A''| \equiv |A'' \setminus V(M')|\:({\rm mod}\:2)$. 
    
{\bf Case $\alpha = {\rm (b)}$}. Since $|e \cap B''| \equiv 0\:({\rm mod}\:2)$ for each $e \in \cH'$, we have $|B''| \equiv |B'' \setminus V(M')|\:({\rm mod}\:2)$. 
    
{\bf Case $\alpha \in \{ {\rm (d)} , {\rm (e)} \}$}. Let $M' = \{e_1 , \dots , e_t \}$ for some even integer $t$. Let $\ell_i \coloneqq |e_i \cap A''|$ for each $i \in [t]$. Since $|e \cap A''|$ is odd for each $e \in \cH'$, we have $\ell_i \equiv 1\:({\rm mod}\:2)$ for each $i \in [t]$. Thus,
\begin{align*}
    |A'' \setminus V(M')| = |A''| - (\ell_1 + \dots + \ell_t)\text{ and }
    |B'' \setminus V(M')| = |B''| - kt + (\ell_1 + \dots + \ell_t),
\end{align*}
so $\frac{|A'' \setminus V(M')| - |B'' \setminus V(M')|}{2} = \frac{|A''|-|B''|}{2} + k\frac{t}{2} - (\ell_1 + \dots + \ell_t) \equiv \frac{|A''|-|B''|}{2}\:({\rm mod}\:2)$. 
Thus, $(A'' \setminus V(M') , B'' \setminus V(M'))$ satisfies the divisibility condition with respect to $\alpha$.
\end{claimproof}

Now we have all the ingredients to prove Lemma~\ref{lem:extremal_OM}.
By Claim~\ref{claim:proph'},~\ref{O1} holds. To show~\ref{O2}, it suffices to prove the following claim.
Recall that $\ell \coloneqq \lceil \frac{k-1}{k} \log_2 n \rceil$, $C_\ell \coloneqq \sum_{i=1}^{\ell}2^{-i} = 1 - 2^{-\ell}$, and $p_\ell \coloneqq 1/(C_\ell 2^{\ell})$.

\begin{claim}
Let $U_\ell$ be a $p_\ell$-random subset of $V(\cH') = V(\cH) \setminus V(e^*)$. With probability $1-o(1)$, for all matchings $M'$ of $\cH'$ satisfying $2 \mid |M'|$, $V(\cH') \setminus V(M') \subseteq U_\ell$, and $|U_\ell \cap V(M')| \leq \eps |U_\ell|$, the subhypergraph $\cH'' \coloneqq \cH - V(e^*) - V(M')$ has a perfect matching.
\end{claim}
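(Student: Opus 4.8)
The plan is to apply Theorem~\ref{thm:perfectmatching_extremal} to $\cH''$ with the ordered partition $(A''',B''')$, where $A''' \coloneqq A'' \setminus V(M')$ and $B''' \coloneqq B'' \setminus V(M')$, using a new constant $\eps'$ with $\eps \ll \eps' \ll 1/k$. Note that $V(\cH'') = (A'' \cup B'') \setminus V(M') = A''' \cup B''' \subseteq U_\ell$ (since $V(\cH') \setminus V(M') \subseteq U_\ell$), that $n'' \coloneqq |V(\cH'')| = (n-k) - k|M'| \equiv 0 \modu{k}$, and that $n'' \geq (1-\eps)|U_\ell| = \Theta(p_\ell n) = \Theta(n^{1/k}) \to \infty$, so $1/n'' \ll \eps'$ for $n$ large. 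The idea is to first isolate, using Chernoff bounds and Lemma~\ref{lemma:prob_lemma}, a good event depending only on $U_\ell$ that holds \aas, and then show that on this event every admissible $M'$ makes $\cH''$ satisfy the hypotheses of Theorem~\ref{thm:perfectmatching_extremal}.

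The good event is the conjunction of the following, each holding \aas by a Chernoff bound or Lemma~\ref{lemma:prob_lemma} together with a union bound. \textbf{(E1)} $|U_\ell| = (1 \pm \eps)p_\ell n$, $|A'' \cap U_\ell| = (1 \pm \eps)p_\ell|A''|$, and $|B'' \cap U_\ell| = (1 \pm \eps)p_\ell|B''|$. \textbf{(E2)} $|N_\cH(S) \cap U_\ell| \geq (1 - \eps'/8)\,\delta_{k-1}(\cH)\,p_\ell$ for every $(k-1)$-set $S \subseteq V(\cH)$; here $\delta_{k-1}(\cH) \geq m_{k-1}(k,n) \geq n/2-k$, so $\delta_{k-1}(\cH)p_\ell = \Omega(n^{1/k})$, and we union over the at most $\binom{n}{k-1}$ choices of $S$. \textbf{(E3)} At most $12k\eps^{1/2}(p_\ell n)^k$ members of $\cF \coloneqq \{\,e \in \cH^0(k,A'',B'') \setminus (\cH - V(e^*)) : e \text{ is }\alpha\text{-typical with respect to }(A'',B'')\,\}$ are contained in $U_\ell$; apply Lemma~\ref{lemma:prob_lemma}\ref{typical_ii} with $s=k$, using that $\cH - V(e^*)$ belongs to type $\alpha$ with respect to $(6k\eps^{1/2},A'',B'')$ by Claim~\ref{claim:div''}, so $|\cF| \leq 6k\eps^{1/2}n^k$. \textbf{(E4)} For every $v \in V(\cH)$, the number of $(k-1)$-sets $T \subseteq U_\ell$ with $T \cup \{v\} \in E_{\cH'}^{r^*}(A'',B'')$ is at least $(1-\eps')\,p_\ell^{k-1}\,d_{E_{\cH'}^{r^*}(A'',B'')}(v)$; apply Lemma~\ref{lemma:prob_lemma}\ref{typical_i} with $s = k-1$, using $d_{E_{\cH'}^{r^*}(A'',B'')}(v) > 0.15\,d_{\cK_{r^*}(A'',B'')}(v) = \Omega(n^{k-1})$ from Claim~\ref{claim:proph'} (which exceeds $\eps' n^{k-1}(p_\ell n)^{-1/2}$), and union over $v$.

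Now fix $U_\ell$ in the good event and an admissible $M'$. From (E1), \ref{S1}, and $|U_\ell \cap V(M')| \leq \eps|U_\ell|$ one gets $|A'''|, |B'''| = (1/2 \pm O(\eps^{1/2}k))\,n''$, hence $\bigl||A'''| - |B'''|\bigr| \leq \eps'n''$. For condition~(i) of Theorem~\ref{thm:perfectmatching_extremal}: every $(k-1)$-set $S \subseteq V(\cH'')$ satisfies $d_{\cH''}(S) \geq |N_\cH(S) \cap U_\ell| - |U_\ell \cap V(M')| \geq (1-\eps'/8)(n/2-k)p_\ell - \eps|U_\ell| \geq n''/2 - \eps'n''$ by (E2) and (E1). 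For condition~(ii): every $\alpha$-typical edge of $\cH^0(k,A''',B''')$ (resp.\ $\overline{\cH^0(k,A''',B''')}$) missing from $\cH''$ lies in $\cF \cap \binom{U_\ell}{k}$, since $e \cap A''$ and $e \cap A'''$ have equal parity when $e \subseteq A''' \cup B'''$; so by (E3) and $n'' \geq \tfrac12 p_\ell n$ their number is at most $12k\eps^{1/2}(p_\ell n)^k \leq \eps'(n'')^k$, i.e.\ $\cH''$ belongs to type $\alpha$ with respect to $(\eps', A''', B''')$. Condition~(iii) — that $(A''',B''')$ satisfies the divisibility condition for $\alpha$ — is exactly Claim~\ref{claim:divfinal}, applicable since $2 \mid |M'|$. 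Condition~(iv): since $r^*$ is $\alpha$-typical (Proposition~\ref{prop:specialtype}), $E_{\cH''}^{r^*}(A''',B''') = \{\,e \in E_{\cH'}^{r^*}(A'',B'') : e \subseteq U_\ell \setminus V(M')\,\}$, so for $v \in V(\cH'')$,
\begin{align*}
d_{E_{\cH''}^{r^*}(A''',B''')}(v) &\geq (1-\eps')\,p_\ell^{k-1}\,d_{E_{\cH'}^{r^*}(A'',B'')}(v) - (k-1)\eps|U_\ell|^{k-1}\\
&> 0.14\,p_\ell^{k-1}\,d_{\cK_{r^*}(A'',B'')}(v) - (k-1)\eps|U_\ell|^{k-1} > 0.1\,d_{\cK_{r^*}(A''',B''')}(v),
\end{align*}
using (E4), Claim~\ref{claim:proph'}, and the estimates $p_\ell^{k-1}d_{\cK_{r^*}(A'',B'')}(v) \geq (1-\eps')d_{\cK_{r^*}(A'' \cap U_\ell,\, B'' \cap U_\ell)}(v) \geq (1-\eps')d_{\cK_{r^*}(A''',B''')}(v)$ and $d_{\cK_{r^*}(A''',B''')}(v) \geq (n''/3)^{k-1}/(k-1)! \gg (k-1)\eps|U_\ell|^{k-1}$ (valid since $\eps \ll \eps' \ll 1/k$ and $|U_\ell| \leq (1+2\eps)n''$). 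Hence Theorem~\ref{thm:perfectmatching_extremal} applies to $\cH''$, producing a perfect matching.

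The delicate point is condition~(iv): it is a \emph{ratio} rather than an absolute bound, so the slack between $0.15$ and $0.1$ must survive both the random sparsification (handled by applying Lemma~\ref{lemma:prob_lemma}\ref{typical_i} to the neighbourhood family $\{T : T \cup \{v\} \in E_{\cH'}^{r^*}(A'',B'')\}$, whose size $\Omega(n^{k-1})$ safely clears the lemma's threshold) and the deletion of $V(M')$; the latter is subtle because $M'$ is not fixed when we condition on $U_\ell$, so the loss it causes must be bounded uniformly over all admissible $M'$ using only $|U_\ell \cap V(M')| \leq \eps|U_\ell|$, which is why the hierarchy $\eps \ll \eps'$ is needed. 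A secondary subtlety is condition~(ii): $|\cF|$ can be of order $\eps^{1/2}n^k$, far larger than $(n'')^k = \Theta(n)$, so it is crucial that the $p_\ell^k$-sparsification (via Lemma~\ref{lemma:prob_lemma}\ref{typical_ii}) drives the count of surviving missing typical edges below $\eps'(n'')^k$.
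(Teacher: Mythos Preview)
Your proof is correct and follows essentially the same route as the paper: isolate a high-probability event for $U_\ell$ via Chernoff bounds and Lemma~\ref{lemma:prob_lemma}, then verify the four hypotheses of Theorem~\ref{thm:perfectmatching_extremal} for $\cH''$ with the partition $(A''',B''')$, invoking Claim~\ref{claim:divfinal} for the divisibility condition. Two cosmetic differences: you use the size estimates (E1) on $|A''\cap U_\ell|$ and $|B''\cap U_\ell|$ to control $d_{\cK_{r^*}(A''\cap U_\ell,B''\cap U_\ell)}(v)$, whereas the paper applies Lemma~\ref{lemma:prob_lemma}\ref{typical_i} a second time to the family $\cG_v=\{e\setminus\{v\}:v\in e\in\cK_{r^*}(A'',B'')\}$; and for (E3) you invoke part~\ref{typical_ii} directly, while the paper pads $\cF$ to a fixed size and uses part~\ref{typical_i}. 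One small slip: your definition of $\cF$ in (E3) is literally correct only for $\alpha\in\{{\rm(a)},{\rm(d)},{\rm(e)}\}$; for $\alpha\in\{{\rm(b)},{\rm(c)}\}$ you need $\overline{\cH^0(k,A'',B'')}$ in place of $\cH^0(k,A'',B'')$ (as Observation~\ref{obs:epscontain_type} makes clear, and as your later ``resp.'' parenthetical indicates you are aware).
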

\begin{claimproof}
First of all, by a Chernoff bound (Lemma~\ref{lem:chernoff}), $|U_\ell| = (1 \pm \eps) p_\ell n$ with probability $1-o(1)$.
We apply Theorem~\ref{thm:perfectmatching_extremal} to show that $\cH''$ has a perfect matching. To do so, we will show that the following assumptions of Theorem~\ref{thm:perfectmatching_extremal} hold with probability $1-o(1)$, where $A''' \coloneqq A'' \setminus V(M')$ and $B''' \coloneqq B'' \setminus V(M')$.

\begin{enumerate}
    \item\label{cond1} $\delta_{k-1}(\cH'') \geq |U_\ell|/2 - \eta |U_\ell|$.
    
    \item\label{cond2} $\cH''$ belongs to the type $\alpha$ with respect to $(\eta , A''' , B''')$. Thus, in particular, $r^*$ is the special $\alpha$-typical index for $\cH''$.
    
    \item\label{cond3} $(A''' , B''')$ satisfies the divisibility condition with respect to $\alpha$.
    
    \item\label{cond4} For each vertex $v \in V(\cH'')$, $d_{E_{\cH''}^{r^*}(A''', B''')}(v) > 0.1 d_{\cK_{r^*}(A''',B''')}(v)$.
\end{enumerate}

First of all, Claim~\ref{claim:divfinal} shows~\eqref{cond3}.
Now we prove~\eqref{cond2}.
Since $\cH'$ belongs to the type $\alpha$ with respect to $(6k\eps^{1/2} , A'' , B'')$ (see the discussion below~\eqref{def:h'}), let us define $\cF \subseteq \binom{A'' \cup B''}{k}$ such that
\begin{itemize}
    \item $\bigcup_{\text{$r$: typical}}\cK_r(A'',B'') \setminus \cH' \subseteq \cF$ and
    \item $|\cF| = 6k \eps^{1/2} n^k \pm 1$.
\end{itemize}

In particular, $\cF$ contains all possible typical `non-edges' of $\cH'$.
By Lemma~\ref{lemma:prob_lemma} \ref{typical_i}, $U_\ell$ is $(p_\ell , \eps , \cF)$-typical with probability $1-o(1)$, so the number of elements in $\cF$ contained in $U_\ell$ is $(1 \pm \eps)p_\ell^k |\cF| \leq 7k \eps^{1/2} |U_\ell|^k$ with probability $1-o(1)$.
Note that the number of elements in $\cF$ contained in $U_\ell$ is at least the number of all possible typical `non-edges' of $\cH'$ contained in $U_\ell$. Thus, $|\bigcup_{\text{$r$: typical}}\cK_r(A'' \cap U_\ell , B'' \cap U_\ell) \setminus \cH'[U_\ell]| \leq 7k \eps^{1/2} |U_\ell|^k$, so $\cH'[U_\ell]$ belongs to the type $\alpha$ with respect to $(7k \eps^{1/2} , A'' \cap U_\ell,B'' \cap U_\ell)$.
Since $\cH' \subseteq \cH$ and $\eps \ll \eta \ll 1/k$, $\cH[U_\ell]$ belongs to the type $\alpha$ with respect to $(\eta / 2 , A'' \cap U_\ell, B'' \cap U_\ell)$. 
Thus, since $\eps \ll \eta$ and $|U_\ell \cap V(M')| \leq \eps |U_\ell|$ and $(V(\cH) \setminus V(e^*)) \setminus V(M') \subseteq U_\ell$, $\cH[U_\ell \setminus V(M')] = \cH''$ belongs to the type $\alpha$ with respect to $(\eta , A'' \setminus V(M'), B'' \setminus V(M'))$, proving~\eqref{cond2}.

Now we prove~\eqref{cond1}. 
Since $\mathbb{E}[d_\cH(S;U_\ell)] \geq p_\ell (\delta_{k-1}(\cH) - |V(e^*)|)$ for each $S \in \binom{V(\cH')}{k-1}$ and $|U_\ell| = (1 \pm \eps)p_\ell n$ with probability $1-o(1)$, by a Chernoff bound (Lemma~\ref{lem:chernoff}) and a union bound, we have $\delta_{k-1}(\cH[U_\ell]) \geq (1 - \eta)|U_\ell| / 2$ with probability $1-o(1)$. Thus, $\delta_{k-1}(\cH[U_\ell] - V(M')) \geq (1 - \eta)|U_\ell| / 2 - |U_\ell \cap V(M')| > |U_\ell|/2 - \eta |U_\ell|$ with probability $1-o(1)$, which shows~\eqref{cond1}.

Finally, we prove~\eqref{cond4}.
For each $v \in A'' \cup B''$, since $U_\ell$ is a $p_\ell$-random subset of $V(\cH')$, we have
\begin{itemize}
    \item $\mathbb{E}[d_{E_{\cH'[U_\ell]}^{r^*}(A'' \cap U_\ell , B'' \cap U_\ell)}(v)] = p_\ell^{k-1} d_{E_{\cH'}^{r^*}(A'', B'')}(v)$ and
    \item $\mathbb{E}[d_{\cK_{r^*}(A'' \cap U_\ell , B'' \cap U_\ell)}(v)] = p_\ell^{k-1} d_{\cK_{r^*}(A'', B'')}(v)$.
\end{itemize}

Let $\cF_v \coloneqq \{ e \setminus \{ v \} : v \in e \in E_{\cH'}^{r^*}(A'' , B'') \}$, and let $\cG_v \coloneqq \{ e \setminus \{ v \} : v \in e \in \cK_{r^*}(A'' , B'') \}$. Applying Lemma~\ref{lemma:prob_lemma} \ref{typical_i} twice for each $v \in A'' \cup B''$ and taking union bounds, with probability $1 - o(1)$, $U_\ell$ is both $(p_\ell ,\eps,\cF_v)$-typical and $(p_\ell ,\eps,\cG_v)$-typical for all $v \in A'' \cup B''$. Thus, for each $v \in U_\ell$,
\begin{align}
    d_{E_{\cH'[U_\ell]}^{r^*}(A'' \cap U_\ell , B'' \cap U_\ell)}(v) &= (1 \pm \eps) p_\ell^{k-1} d_{E_{\cH'}^{r^*}(A'', B'')}(v) \nonumber \\
    \overset{\text{\cref{claim:proph'}}} & {\geq} (1 - \eps) p_\ell^{k-1} \cdot 0.15 d_{\cK_{r^*}(A'', B'')}(v) \nonumber \\
    & \geq 0.15 \cdot \frac{1-\eps}{1+\eps} \cdot d_{\cK_{r^*}(A'' \cap U_\ell, B'' \cap U_\ell)}(v), \label{eqn:degree_in_randomset}
\end{align}
where the first equality and the last inequality follow since $U_\ell$ is $(p_\ell,\eps,\cF_v)$-typical and $(p_\ell,\eps,\cG_v)$-typical, respectively. On the other hand, since $r^*$ is the special $\alpha$-typical index for $\cH'$ and $\cH'$ is the subhypergraph of typical edges of $\cH - V(e^*)$, we have $E_{\cH''}^{r^*}(A''' , B''') = E_{\cH' - V(M')}^{r^*}(A''' , B''')$. Thus,
\begin{align*}
    d_{E_{\cH''}^{r^*}(A''' , B''')}(v) & \geq d_{E_{\cH'[U_\ell]}^{r^*}(A'' \cap U_\ell , B'' \cap U_\ell)}(v) - |U_\ell \cap V(M')| |U_\ell|^{k-2}\\
    \overset{\eqref{eqn:degree_in_randomset}}&{\geq} 0.12 d_{\cK_{r^*}(A'' \cap U_\ell , B'' \cap U_\ell)}(v) - \eps |U_\ell|^{k-1} \\
    & > 0.1 d_{\cK_{r^*}(A'' \cap U_\ell , B'' \cap U_\ell)}(v) \geq 0.1 d_{\cK_{r^*}(A''' , B''')}(v).
\end{align*}

In the penultimate inequality we used that $|U_\ell|$ is large enough (it is $(1 \pm \eps)p_\ell n$ with probability $1-o(1)$), and in the final inequality we used $V(\cH') \setminus V(M') \subseteq U_\ell$.
This proves~\eqref{cond4}. Thus, by Theorem~\ref{thm:perfectmatching_extremal}, $\cH''$ has a perfect matching with probability $1-o(1)$.
\end{claimproof}
\end{proof}

\bibliographystyle{amsabbrv}
\bibliography{ref}

\APPENDIX{

\appendix
\renewcommand\sectionname{}
\renewcommand\appendixname{A}
\renewcommand{\thesection}{\Alph{section}}

\section{Proofs of Lemmas~\ref{lemma:prob_lemma}, \ref{lem:reduced_degree}, and \ref{lem:vortex_existence}}\label{app:lemma-proofs}

In this section, we prove Lemmas~\ref{lemma:prob_lemma}, \ref{lem:reduced_degree}, and \ref{lem:vortex_existence}.

As mentioned, we prove \cref{lemma:prob_lemma} via the polynomial concentration theorem of Kim and Vu \cite{Kim2000}. We first give some definitions and then state the theorem. Let $n$ and $r$ be integers and let $\cG$ be a hypergraph on $n$ vertices in which each edge has size at most $r$. Suppose $\{ X_v  : v \in V(\cG) \}$ is a set of mutually independent Bernoulli random variables. We define the random variable
\[
Y_\cG \coloneqq \sum_{e \in \cG} \prod_{v \in e} X_v.
\]
For a subset $A \subseteq V(\cG)$, we define $\cG_A$ to be the hypergraph with $V(\cG_A) \coloneqq V(\cG) \setminus A$ and $E(\cG_A) \coloneqq \{S \subseteq V(\cG_A) \colon S \cup A \in E(\cG)\}$. Thus we have
\[
Y_{\cG_A} = \sum_{\substack{e \in \cG \\ A \subseteq e}} \prod_{v \in e \setminus A} X_v.
\]
Moreover, for each $0 \leq i \leq r$, we let
\[
\cE_i(\cG) \coloneqq \max_{\substack{A \subseteq V(\cG) \\ \s{A} = i}} \mathbb{E}[Y_{\cG_A}].
\]
Finally, we let $\cE(\cG) \coloneqq \max_{0 \leq i \leq r} \cE_i(\cG)$ and $\cE'(\cG) \coloneqq \max_{1 \leq i \leq r} \cE_i(\cG)$.

\begin{theorem}[Kim--Vu polynomial concentration \cite{Kim2000}] \label{thm:Kim_Vu}
In the above setting, we have
\[
\pr{\s{Y_\cG - \mathbb{E}[Y_\cG]} > a_r (\cE(\cG)\cE'(\cG))^{1/2}\lambda^r} \leq 2e^2 e^{-\lambda} n^{r-1}
\]
for any $\lambda > 1$ and $a_r \coloneqq 8^r r!^{1/2}$.
\end{theorem}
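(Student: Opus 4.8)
The plan is to prove \cref{thm:Kim_Vu} by induction on the degree $r$. Observe first that $Y_\cG = \sum_{e\in\cG}\prod_{v\in e}X_v$ has only non-negative coefficients, i.e.\ it is a \emph{positive} polynomial; then $\cE_0(\cG) = \mathbb{E}[Y_\cG]$, so $\cE(\cG)\ge\mathbb{E}[Y_\cG]$, and — the feature that makes the induction work — fixing a subset of the variables $X_v$ and replacing the rest by their means $p_v\in[0,1]$ yields a positive polynomial each of whose parameters $\cE_i$ can only have decreased. For the base case $r=1$, $Y_\cG$ is a sum of independent Bernoulli random variables with $\cE'(\cG)\le 1$, and taking $t\coloneqq a_1(\cE(\cG)\cE'(\cG))^{1/2}\lambda = 8\,\mathbb{E}[Y_\cG]^{1/2}\lambda$ one checks (using $\lambda>1$ and $\cE(\cG)\ge\max(\mathbb{E}[Y_\cG],1)$) that the two-sided Chernoff/Bernstein bound, equal to $2\exp(-(t^2/2)/(\mathbb{E}[Y_\cG]+t/3))$, is at most $2e^{-\lambda}\le 2e^2e^{-\lambda}$; \cref{lem:chernoff} together with its standard upper-tail counterpart suffices here.

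For the inductive step, fix an enumeration $v_1,\dots,v_n$ of $V(\cG)$, set $\mathcal F_j\coloneqq\sigma(X_{v_1},\dots,X_{v_j})$, and consider the Doob martingale $M_j\coloneqq\mathbb{E}[Y_\cG\mid\mathcal F_j]$, so that $M_n-M_0 = Y_\cG-\mathbb{E}[Y_\cG]$. Splitting $Y_\cG$ into the monomials containing $v_j$ and those not, one gets $M_j-M_{j-1} = (X_{v_j}-p_{v_j})W_j$, where $W_j\coloneqq\mathbb{E}[Y_{\cG_{\{v_j\}}}\mid\mathcal F_{j-1}]$ is the conditional expectation of the positive polynomial $Y_{\cG_{\{v_j\}}}$ of degree $\le r-1$. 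Since $\partial_A Y_{\cG_{\{v\}}} = Y_{\cG_{A\cup\{v\}}}$, every parameter $\cE_i$ of $Y_{\cG_{\{v\}}}$ — including $\cE_0 = \mathbb{E}[Y_{\cG_{\{v\}}}]\le\cE_1(\cG)$ — is at most $\cE'(\cG)$, and by the remark above this persists after conditioning; so the induction hypothesis applied to the $n$ polynomials $Y_{\cG_{\{v\}}}$ gives, after a union bound absorbed into the factor $n^{r-1}$, that with high probability $\max_j W_j\le b$ where $b\coloneqq 2a_{r-1}\cE'(\cG)\lambda^{r-1}$. Feeding this increment bound and a bound $\langle M\rangle_n\coloneqq\sum_j p_{v_j}(1-p_{v_j})W_j^2\le\sigma^2$ into Freedman's martingale inequality
\[
\pr{\s{M_n-M_0}\ge t,\ \langle M\rangle_n\le\sigma^2,\ \max_j\s{M_j-M_{j-1}}\le b}\le 2\exp\!\left(\frac{-t^2/2}{\sigma^2+bt/3}\right),
\]
with $t\coloneqq a_r(\cE(\cG)\cE'(\cG))^{1/2}\lambda^r$, and choosing $a_r$ appropriately (one aims for $a_r = 8a_{r-1}$, i.e.\ $a_r = 8^r\sqrt{r!}$, once the powers of $\lambda$ are tracked) so that both $\sigma^2$ and $bt/3$ are at most $t^2/(4\lambda)$, yields the claimed tail bound.

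It remains to produce the bound $\langle M\rangle_n\le\sigma^2$, and this is where I expect the main difficulty to lie. Writing $\langle M\rangle_n\le(\max_j W_j)\sum_j p_{v_j}W_j\le b\cdot S$ with $S\coloneqq\sum_j p_{v_j}W_j$, it suffices to show $S = O_r(\cE(\cG)\,\mathrm{poly}(\lambda))$ with high probability, for then $\sigma^2 = O_r(\cE(\cG)\cE'(\cG)\,\mathrm{poly}(\lambda))$. The natural tool is the induction hypothesis applied to the ``smoothed'' polynomial $\widetilde S\coloneqq\sum_v p_v Y_{\cG_{\{v\}}}$, which is positive, has degree $\le r-1$, and satisfies $\cE_i(\widetilde S)\le r\,\cE_i(\cG)$ for every $i$ — this follows by applying to each $\cE_i$ the deterministic identity $\sum_v X_v Y_{\cG_{A\cup\{v\}}} = \sum_{e\supseteq A}\s{e\setminus A}\,\prod_{u\in e\setminus A}X_u\le r\,Y_{\cG_A}$ — so that $\widetilde S = O_r(\cE(\cG)\lambda^{r-1})$ with high probability. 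The genuinely delicate step is then to pass from this concentration of $\widetilde S = \sum_j p_{v_j}Y_{\cG_{\{v_j\}}}$ to a bound on $S = \sum_j p_{v_j}\mathbb{E}[Y_{\cG_{\{v_j\}}}\mid\mathcal F_{j-1}]$, the ``predictable projection'' of $\widetilde S$ in which the conditioning level varies with $j$; I would attack this with a second, shorter martingale argument (or a direct moment computation). That comparison, together with the numerical constant chase and the need to treat the regime of large $\lambda$ (where $bt/3$ dominates Freedman's denominator) separately from moderate $\lambda$, is where the real work sits. If one prefers a route that avoids Freedman's inequality, one can instead follow the original moment method of Kim and Vu, bounding $\mathbb{E}[(Y_\cG-\mathbb{E}[Y_\cG])^{2m}]$ inductively in terms of products of the $\cE_i(\cG)$; the obstacle there migrates into the combinatorics of expanding the $2m$-th central moment.
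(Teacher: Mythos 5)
This statement is not proved in the paper at all: \cref{thm:Kim_Vu} is imported verbatim from Kim and Vu \cite{Kim2000} and used as a black box (its only role is in the appendix proof of Lemma~\ref{lemma:prob_lemma}). So there is no proof of the paper's to match, and what you have written is an attempt to reprove the external theorem. As it stands it is a sketch with genuine gaps, not a proof. The most serious one is the one you flag yourself: the bound on the predictable quadratic variation, i.e.\ passing from concentration of the smoothed polynomial $\widetilde S=\sum_v p_v Y_{\mathcal{G}_{\{v\}}}$ to a bound on its predictable projection $S=\sum_j p_{v_j}\mathbb{E}[Y_{\mathcal{G}_{\{v_j\}}}\mid\mathcal{F}_{j-1}]$, is exactly where the content of the theorem lives, and "a second, shorter martingale argument (or a direct moment computation)" is not supplied. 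Without it, Freedman's inequality cannot be invoked with the claimed $\sigma^2$, and the constant $a_r=8^r\sqrt{r!}$ and the $2e^2e^{-\lambda}n^{r-1}$ failure probability are unverified; the large-$\lambda$ regime and the accumulation of exceptional probabilities over the induction (the $n$ applications of the hypothesis at level $r-1$ plus the Freedman event at level $r$) are likewise deferred.

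A second, more local error: your claim that ``fixing a subset of the variables $X_v$ and replacing the rest by their means yields a positive polynomial each of whose parameters $\mathcal{E}_i$ can only have decreased'' is false as stated. Replacing variables by their means can only decrease a positive polynomial, but fixing an observed variable to $1$ can increase conditional expectations of derivatives beyond the corresponding unconditional parameters (roughly, $\mathbb{E}[\partial_A Y_{\mathcal{G}}\mid X_v=1]$ picks up a $\partial_{A\cup\{v\}}$ term). What is true, and what your induction actually needs, is that $W_j=\mathbb{E}[Y_{\mathcal{G}_{\{v_j\}}}\mid\mathcal{F}_{j-1}]$ is itself a positive polynomial of degree at most $r-1$ in the already-revealed variables whose parameters are bounded by $\mathcal{E}_{i+1}(\mathcal{G})\le\mathcal{E}'(\mathcal{G})$; with that correction the induction hypothesis can be applied to $W_j$, but the step should be restated. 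If you want a complete argument you would have to carry out the full Kim--Vu induction (or their moment method) rather than cite the result; in the context of this paper, quoting \cite{Kim2000}, as the authors do, is the intended treatment.
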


\begin{proof}[Proof of \cref{lemma:prob_lemma}]
    We first prove \labelcref{typical_i}.
    Independently for each $v \in V$, let $X_v \in \{0,1\}$ with $\pr{X_v = 1} = p$ and let $U = \{v \in V \colon X_v = 1\}$. Define~$\cG$ to be the hypergraph with $V(\cG) = V$ and $E(\cG) = \mathcal{F}$. Note that each edge in $\cG$ has size $s$.
    Since~$Y_\cG$ is the number of elements of $\mathcal{F}$ that are contained in $U$, we have
    \begin{align*}
        \mathbb{E}[Y_\cG] = \s{\cF} p^s \geq \eps (np)^{s-1/2}.
    \end{align*}
    Let $1 \leq i \leq s$ and $A \subseteq V(\cG) = V$ with $\s{A} = i$. Note that $Y_{\cG_A}$ is the number of $F \in \cF$ such that $A \subseteq F$ and $F \setminus A \subseteq U$. It follows that 
    \begin{align*}
        \mathbb{E}[Y_{\cG_A}] \leq n^{s-i} p^{s-i} = (np)^{s-i} \leq (np)^{s-1} \leq (np)^{-1/4}\mathbb{E}[Y_\cG].
    \end{align*}    
    Hence $\cE(\cG) = \mathbb{E}[Y_\cG]$ and $\cE'(\cG) \leq (np)^{-1/4}\mathbb{E}[Y_\cG]$. Now let 
    \begin{align*}
        \lambda \coloneqq \left(\frac{\eps\mathbb{E}[Y_\cG]}{ a_{s}(\cE(\cG)\cE'(\cG))^{1/2}}\right)^{1/s} \geq \left(\frac{\eps(np)^{1/8}}{a_{s}}\right)^{1/s} \geq n^{\beta/(9s)}.
    \end{align*}
    By \cref{thm:Kim_Vu}, we have
    \begin{align*}
        \pr{\s{Y_\cG - \mathbb{E}[Y_\cG]} > \eps \mathbb{E}[Y_\cG]} &= \pr{\s{Y_\cG - \mathbb{E}[Y_\cG]} > a_{s} (\cE(\cG)\cE'(\cG))^{1/2} \lambda^{s}} \\
        &\leq 2e^2 e^{-\lambda} n^{s-1} \leq \exp(-n^{\beta/(10s)}).
    \end{align*}
    Thus with probability at least $1 -  \exp(-n^{\beta/(10s)})$, we have that the number of elements of $\cF$ contained in~$U$ is 
    $(1 \pm \eps) \mathbb{E}[Y_\cG] = (1 \pm \eps)\s{\cF}p^s$, 
    which concludes the proof.

    Now we show that \labelcref{typical_ii} follows from \labelcref{typical_i}. Let $\mathcal{F}' \subseteq \binom{V}{s}$ be such that $\mathcal{F} \subseteq \mathcal{F}'$ and $\eps n^s (np)^{1/2} \leq \s{\mathcal{F}'} \leq \eps n^s$. By \labelcref{typical_i}, with probability at least $1 -  \exp(-n^{\beta/(10s)})$, $U$ is $(p, \eps, \mathcal{F}')$-typical. It follows that, with probability at least $1 -  \exp(-n^{\beta/(10s)})$,
    \[
        \s{\{S \in \mathcal{F} \colon S \subseteq U\}} \leq \s{\{S \in \mathcal{F}' \colon S \subseteq U\}} \leq (1+\eps) p^s \s{\mathcal{F}'} \leq 2 \eps (np)^s,
    \]
    as desired.
\end{proof}


\begin{proof}[Proof of \cref{lem:reduced_degree}]
Let $S = \{i_1 , \dots , i_{d} \} \in \binom{[t]}{d}$ be \emph{good} if there are at least $(1 - \eps^{1/2}) \binom{t-d}{k-d}$ many  $(k-d)$-sets $\{i_{d+1} , \dots , i_k \} \in \binom{[t] \setminus S}{k-d}$ such that $(V_{i_1} , \dots , V_{i_k})$ is $\eps$-regular. 
Since there are at most $\eps \binom{t}{k}$ many $k$-sets in $\binom{[t]}{k}$ which are not $\eps$-regular, by an averaging argument, all but at most $\frac{\eps \binom{t}{k} \binom{k}{d}}{\eps^{1/2} \binom{t-d}{k-d}} = \eps^{1/2} \binom{t}{d}$ many $d$-sets in $\binom{[t]}{d}$ are good.

Now it suffices to show that every good set in $\binom{[t]}{d}$ has $d$-degree at least $(c - \gamma)\binom{t-d}{k-d}$ in $\cR$.
Suppose, for a contradiction, that a good set $S = \{ i_1 , \dots , i_{d} \} \in \binom{[t]}{d}$ has $d$-degree less than $(c - \gamma) \binom{t-d}{k-d}$ in~$\cR$.
Let $n_* \coloneqq |V_1| = \dots = |V_t|$, which satisfies $\frac{2n}{3t} \leq (1-\eps)n/t \leq n_* \leq n/t$ since $\eps \leq 1/3$. 
Let $N_S$ be the set of edges $e \in \cH$ with $|e \cap V_{i_j}| = 1$ for all $j \in [d]$. 
Since all but at most $\eta n^{d}$ many $d$-sets in $\binom{V_{i_1} \cup \cdots \cup V_{i_d}}{d}$ have $d$-degree at least $c \binom{n-d}{k-d}$, we have
\begin{align*}
    |N_S| &\geq (n_*^{d} - \eta n^{d}) c \binom{n-d}{k-d} - n_*^{d} \cdot d n_* \binom{n-d-1}{k-d-1} \geq n_*^{d} \binom{n-d}{k-d} \left (c - \gamma c/6 - dkn_* / n \right )\\
    &\geq n_*^{d} (c - \gamma/3) \binom{n-d}{k-d}.
\end{align*}

Let $\mathcal{E}(S)$ be the set of $(k-d)$-sets $\{i_{d+1} , \dots , i_k \} \in \binom{[t] \setminus S}{k-d}$ such that $(V_{i_1} , \dots , V_{i_k})$ is not $\eps$-regular.
Since $S$ is good, we have $|\mathcal{E}(S)| \leq \eps^{1/2}\binom{t-d}{k-d}$. Since $\cR$ is the $(\gamma/3 , \eps)$-reduced hypergraph, for $\{i_{d+1}, \dots, i_k\} \in \binom{[t]\setminus S}{k-d} \setminus (N_\cR(S) \cup \mathcal{E}(S))$, we have $e_\cH(V_{i_1} , \dots , V_{i_k}) \leq \gamma/3 \cdot |V_{i_1}| \cdots |V_{i_k}| = \gamma/3 \cdot n_*^k$. 
Note that moreover there are at most $\eps n_*^d \cdot n^{k-d}$ edges $e \in N_S$ with $e \cap V_0 \neq \varnothing$. Finally, there are at most $t^{k-d-1} n_*^k$ edges $e \in N_S$ with $e \cap V_0 = \varnothing$ that contain more than one vertex from $V_i$ for some $i \in [t]$. Recall that by assumption $|N_\cR(S)| < (c-\gamma) \binom{t-d}{k-d}$.
Hence we have
\begin{align*}
    |N_S| &\leq \s{\binom{[t]\setminus S}{k-d} \setminus (N_\cR(S) \cup \mathcal{E}(S))} \gamma/3 \cdot n_*^k + \s{N_\cR(S) \cup \mathcal{E}(S)} n_*^k + \eps n_*^d \cdot n^{k-d} + t^{k-d-1}n_*^k\\
    &< \binom{t-d}{k-d} \gamma n_*^k/3 + (c- \gamma + \eps^{1/2}) \binom{t-d}{k-d} n_*^k + \eps n_*^d \cdot n^{k-d} + t^{k-d-1}n_*^k \\
    & < n_*^{d} (c - \gamma / 3) \binom{n-d}{k-d}.
\end{align*}
This contradicts the bound $N_S \geq n_*^{d} (c - \gamma/3) \binom{n-d}{k-d}$ obtained above. Thus every good set in $\binom{[t]}{d}$ has $d$-degree at least $(c - \gamma)\binom{t-d}{k-d}$ in $\cR$.
\end{proof}

\begin{proof}[Proof of \cref{lem:vortex_existence}]
Note that $p_i n \geq p_\ell n \geq \eps n^{1/k}$ for all $i \in [\ell]$.
For each $i \in [\ell]$, since $\mathbb{E}[|U_i|] = p_i n$, by a Chernoff bound and a union bound, with probability at least $1-\exp(-n^{1/(2k)})$, for all $i \in [\ell]$ we have $|U_i| = (1 \pm \eps)p_i n$. Thus \aas~\ref{V1} holds.

We call $S \in \binom{V(\cH)}{d}$ \emph{good} if $d_\cH(S) \geq \alpha_1 n^d$, otherwise we call it \emph{bad}.
Since $\cH$ is $(\alpha_1 , \alpha_2 , d , \eps)$-dense, there are at most $\eps n^{d}$ bad $d$-sets in $\binom{V(\cH)}{d}$. 
By \cref{lemma:prob_lemma} \ref{typical_ii} and a union bound, we have that, with probability at least $1 - \exp(-n^{1/(11k^2)})$, for each $i \in [\ell]$, $U_i$ contains at most $2\eps (p_in)^d$ bad $d$-sets. By \cref{lemma:prob_lemma} \ref{typical_i} and a union bound, we have that, with probability at least $1 - \exp(-n^{1/(11k^2)})$, for each $i \in [\ell]$ and each good $S \in \binom{V(\cH)}{d}$, we have $d_\cH(S; \binom{U_i}{k-d}) \geq (\alpha_1 - 2\eps) (p_in)^{k-d}$. Hence \aas~\ref{V2} holds.

By \cref{lemma:prob_lemma} \ref{typical_i} and a union bound, we have that, with probability at least $1 - \exp(-n^{1/(11k^2)})$, for each $i \in [\ell]$ and each vertex $v \in V(\cH)$, $d_\cH(v ; \binom{U_{i} \setminus \{ v \} }{k-1}) \geq (\alpha_2 - 2\eps) (p_{i} n)^{k-1}$. So \aas~\ref{V3} holds. 
\end{proof}

}

\end{document}